\documentclass[10pt,letterpaper]{amsart} 

\usepackage{color}
\usepackage[margin=0.95in]{geometry}
\usepackage{subfig}
\usepackage{multicol}
\usepackage{listings}
\lstset{
	basicstyle=\ttfamily,
	mathescape
}
\usepackage{amsmath, amsthm, amssymb, wasysym, verbatim, bbm, color, graphics}
\usepackage{url}
\usepackage{mathtools}
\usepackage{hyperref}
\hypersetup{colorlinks,allcolors=blue}
\usepackage{romannum}

\usepackage{xfrac}
\usepackage{float}

\usepackage[title]{appendix}
\usepackage{todonotes}
\newcommand{\R}{\mathbb{R}}

\newcommand{\N}{\mathbb{N}}

\DeclarePairedDelimiter{\ceil}{\lceil}{\rceil}
\DeclarePairedDelimiter{\floor}{\lfloor}{\rfloor}

\newcommand{\hlfstp}{n+\frac{1}{2}}

\DeclareMathOperator{\tr}{\text{tr}}

\newcommand\innprod[2]{\langle #1,#2 \rangle}
\newcommand\norm[1]{\left\lVert#1\right\rVert}
\newcommand{\Grad}{\nabla}
\newcommand{\Div}{\operatorname{div}}
\newcommand{\dom}{\Omega}

\newtheorem{lemma}{Lemma}[section]
\newtheorem{proposition}[lemma]{Proposition}
\newtheorem{corollary}[lemma]{Corollary}
\newtheorem{theorem}[lemma]{Theorem}

\newtheorem{notation}[lemma]{Notation}

\newtheorem*{maintheorem*}{Main Theorem}
\theoremstyle{definition}{\newtheorem{definition}[lemma]{Definition}}

\newtheorem{thm}[lemma]{Theorem}

\newtheorem{lem}[lemma]{Lemma}

\allowdisplaybreaks

\numberwithin{equation}{section}

\title[Numerics for the Q-tensor flow]{Convergence analysis of a fully discrete energy-stable numerical scheme for the Q-tensor flow of liquid crystals}
\date{\today}

\author[V. M. Gudibanda]{Varun M. Gudibanda} \address[Varun M. Gudibanda]{\newline Department of Mathematics
	\newline University of Wisconsin -- Madison\newline
	A0048 - 480 Lincoln Dr, 
	Madison, WI 53706-1325, USA.}
\email[]{gudibanda@wisc.edu}

\author[F. Weber]{Franziska Weber}
\address[Franziska Weber]{\newline Department of Mathematical Sciences \newline Carnegie Mellon University \newline 5000 Forbes Avenue, Pittsburgh, PA 15213, USA.}
\email[]{franzisw@andrew.cmu.edu}
\author[Y. Yue]{Yukun Yue}
\address[Yukun Yue]{\newline Department of Mathematical Sciences \newline Carnegie Mellon University \newline 5000 Forbes Avenue, Pittsburgh, PA 15213, USA.}
\email[]{yukuny@andrew.cmu.edu}

\begin{document}
 \pagenumbering{arabic}
 \begin{abstract}
 	We present a fully discrete convergent finite difference scheme for the Q-tensor flow of liquid crystals based on the energy-stable semi-discrete scheme by Zhao, Yang, Gong, and Wang (Comput. Methods Appl. Mech. Engrg. 2017). We prove stability properties of the scheme and show convergence to weak solutions of the Q-tensor flow equations. We demonstrate the performance of the scheme in numerical simulations.
 \end{abstract}
\maketitle

\section{Introduction}

Liquid crystals constitute a state of matter that is intermediate between solids and liquids. On one hand, they have properties that are typical for fluids, in particular they have the ability to flow, on the other hand, they exhibit properties of solids, as an example, their molecules are oriented in a crystal-like manner. A common characteristic of materials exhibiting a liquid crystal phase is that they consist of elongated molecules of identical size. They may be pictured as `rods' or `ribbons' and are subject to molecular interactions that make them align alike \cite{sv2012}. 
 
Liquid crystals play an important role in nature: As an example, phospholipids which constitute the  main component of cell membranes, are a form of liquid crystal. They also appear in many daily applications, such as soaps, shampoos and detergents. Further applications include displays of electronic devices (LCD), where one makes use of the optical properties of liquid crystals in the presence or absence of an electric field; thermometers, optical switches~\cite{Goodby1989,Wissbrun1984}, and biotechnological applications.
One generally distinguishes three types of liquid crystals, nematics, cholesterics and smectics. We focus here on the numerical discretization of a liquid crystal model for nematic liquid crystals, the so-called Q-tensor model.
\subsection{Q-tensor model}
In the Q-tensor model by Landau and de Gennes~\cite{deGennes1995}, the main orientation of the liquid crystal molecules is represented by the Q-tensor, a symmetric, trace-free matrix that is assumed to minimize the Landau-de Gennes free energy

\begin{equation*}
E_{LG}(Q)=\int_{\dom}\mathcal{F}_B(Q) +\mathcal{F}_E(Q),
\end{equation*}
in equilibrium situations. Here $\Omega\in \R^d$, $d=2,3$, is the spatial domain occupied by the liquid crystal molecules, $\mathcal{F}_B$ is a bulk potential and $\mathcal{F}_E$ is the elastic energy given by
\begin{equation*}
	\begin{split}
	\mathcal{F}_B(Q) = \frac{a}{2}\tr(Q^2)-\frac{b}{3}\tr(Q^3)+\frac{c}{4}(\tr(Q^2))^2,
	\quad 
	\mathcal{F}_E(Q)= \frac{L_1}{2}|\Grad Q|^2 + \frac{L_2}{2}|\Div Q|^2 + \frac{L_3}{2}\sum_{i,j,k=1}^d \partial_i Q_{jk}\partial_k Q_{ji},	
	\end{split}\end{equation*}
where $a,b,c,L_1,L_2,L_3$ are constants with $c,L_1,L_2,L_3>0$.

Non-equilibrium situations can be described by the gradient flow~\cite{Ball2017,Mottram2014}, 
\begin{equation}\label{eq:qtensorflow}
\begin{split}
\frac{\partial Q_{ij}}{\partial t} = M &\Bigg(L_1 \Delta Q_{ij} + \frac{L_2+L_3}{2} \left(\sum_{k=1}^d (\partial_{ik}Q_{jk} + \partial_{jk}Q_{ik}) - \frac{2}{d} \sum_{k,\ell=1}^d \partial_{k\ell}Q_{k\ell} \delta_{ij}\right) \\
&- \left(a Q_{ij} - b\left((Q^2)_{ij} -\frac{1}{d} \tr(Q^2)\delta_{ij}\right)+c \tr(Q^2)Q_{ij}\right) \Bigg),
\end{split}
\end{equation}
where $M>0$ is a constant, and one approach to obtaining equilibrium states is to follow this gradient flow. 
Adding the dynamics of the mean flow of the liquid crystal fluid to this, one obtains the Beris-Edwards system~\cite{Beris1994}. 

Analysis of the Q-tensor flow has been done, e.g.,~\cite{Iyer2015,Contreras2019,WangWang2017}, and numerical methods for the Q-tensor flow have been constructed in~\cite{ZhaoYang2017,shen2019,Mori1999,Tovkach2017}. To the best of our knowledge, none of these methods has been shown to be convergent to a weak solution of~\eqref{eq:qtensorflow}. An exception is the work by Cai, Shen and Xu~\cite{Cai2017}, where under the assumption of smallness of the initial data, convergence of a time discretization in 2D is proved. Our goal is to show convergence to weak solutions of a fully discrete method for~\eqref{eq:qtensorflow} in 2 and 3D under only the natural assumption that the initial energy is bounded. Our numerical method is based on the invariant energy quadratization idea (IEQ) by Zhao et al.~\cite{ZhaoYang2017} which we combine with a finite difference discretization in space. 
 
This method takes as a basis the reformulation of the Q-tensor flow using the auxiliary variable $r$:
\begin{equation}\label{eq:defr}
r(Q)=\sqrt{2\left(\frac{a}{2}\tr(Q^2)-\frac{b}{3}\tr(Q^3)+\frac{c}{4}\tr^2(Q^2)+A_0\right)},
\end{equation}
where $A_0>0$ is a constant ensuring that $r$ is positive. Defining
\begin{equation}\label{eq:defS}
S(Q)=a Q-b\left[Q^2-\frac{1}{d}\text{tr}(Q^2)I\right]+c\text{tr}(Q^2)Q,
\end{equation}
it follows that
\begin{equation}\label{eq:defP}
\frac{\delta r(Q)}{\delta Q} = \frac{S(Q)}{r(Q)}:= P(Q),
\end{equation}
for symmetric, trace free tensors $Q$.
Then one can formally write the gradient flow~\eqref{eq:qtensorflow} as a system for $(Q,r)$:
\begin{subequations}\label{eq:reformulation}
	\begin{align}
	Q_t &=M\left(L_1\Delta Q +\frac{L_2+L_3}{2}\alpha(Q)-r P(Q)\right):=MH,\label{eq:QZ}\\
	r_t & = P(Q):Q_t,\label{eq:rZ}
	\end{align}
\end{subequations}
where 
\begin{equation*}
\alpha(Q)_{ij} = \sum_{k=1}^d (\partial_{ik}Q_{jk} + \partial_{jk}Q_{ik}) - \frac{2}{d} \sum_{k,\ell=1}^d \partial_{k\ell}Q_{k\ell} \delta_{ij}.
\end{equation*}
It is easy to see that this reformulation comes with a formal energy law: Multiplying the first equation~\eqref{eq:QZ} with $-H$ and~\eqref{eq:rZ} with $r$, adding and integrating, and integrating by parts, we obtain
\begin{equation}\label{eq:energyQr}
\frac{d}{dt}\,\frac12\int_\Omega\left(L_1|\Grad Q|^2+(L_2+L_3)|\Div Q|^2 +r^2\right)dx = -M\int_\Omega |H|^2 dx.
\end{equation}
In~\cite{ZhaoYang2017}, a time discretization of the system~\eqref{eq:reformulation} is proposed that retains a discrete version of the energy law~\eqref{eq:energyQr}. Based on this prior work, we propose a fully discrete finite difference method for~\eqref{eq:reformulation} and prove its convergence to weak solutions of~\eqref{eq:reformulation} as defined in the following Definition~\ref{def:weakreformulation}.

We then proceed to showing that weak solutions of~\eqref{eq:reformulation} are in fact weak solutions of~\eqref{eq:qtensorflow} and so achieve convergence to the original system~\eqref{eq:qtensorflow}. To the best of our knowledge, this is the first convergence proof for a fully discrete numerical scheme discretizing~\eqref{eq:qtensorflow}.
 The proof is based on the derivation of discrete energy stability of the fully discrete scheme, then using this to derive the existence of a precompact sequence that allows us to pass to the limit in the approximations. We proceed to showing Lipschitz continuity of the function $P$ and use a Lax-Wendroff type argument to show that the limit of the approximating sequence is a weak solution of~\eqref{eq:reformulation}. The last step is to show that weak solutions of~\eqref{eq:reformulation} are in fact weak solutions of~\eqref{eq:qtensorflow}. We achieve this through showing that a weak form of the chain rule holds in this case. We conclude with numerical experiments in 2D. Our scheme and analysis is for the 3D case but adaptions to 2D can be made easily.

\section{Preliminaries}
\begin{notation}
We introduce the following general notation for matrix-valued functions $A,B: \R^d \to \R^{d \times d}$:
	\begin{multicols}{2}
\begin{itemize}

	\item $A:B = \sum_{i,j=1}^d A_{ij}B_{ij}$,
	\item $\innprod{A}{B} = \int_{\Omega} A:B\, dx$,
	\item $|A|:=|A|_F = \sqrt{A:A}$,
	\item $\norm{A}^2_{L^2} = \int_{\Omega} |A|_F^2\, dx$, 
	\item $\partial_i A = (\partial_i A_{jk})_{jk}$, $\partial_i=\partial_{x_i}$,
	
	\item $\nabla A=(\partial_1 A,\dots,\partial_d A)$,
	\item $|\nabla A|^2 =\sum_{i=1}^d |\partial_i A|_F^2$,
	\item $\norm{\nabla A}_{L^2}^2 = \int_{\Omega}|\nabla A|^2\, dx$.
\end{itemize}
	\end{multicols}
\end{notation}

	We assume $\Omega\subset\R^d$ is a bounded, connected domain with Lipschitz boundary and $Q_0:\Omega\to\R^{d\times d}\in (H^1(\dom))^{d\times d}$ takes values in the symmetric trace-free $d\times d$ matrices and satisfies
	$\left.Q_0\right|_{\partial\Omega} = 0$.
	 Fix $T>0$ an arbitrary time horizon. We then define weak solutions of~\eqref{eq:qtensorflow} as follows:
	\begin{definition}
		\label{def:weakQ}
		By a weak solution of~\eqref{eq:qtensorflow}, we mean a function $Q:[0,T]\times\Omega\to\R^{d\times d}$ that is trace-free and symmetric for every $(t,x)$ and satisfies
		\begin{equation*}
		Q\in L^\infty(0,T;H^1(\dom)),\quad Q_t\in L^2([0,T]\times\Omega),
		\end{equation*}
		and
		\begin{equation}
		\label{eq:weakform}
		\begin{split}
		&\int_0^T\int_\Omega Q :\partial_t\varphi dx dt-\int_\Omega Q(T,x):\varphi(T,x) dx +\int_{\Omega}Q_0(x):\varphi(0,x) dx\\
		& =M \int_0^T\int_\Omega\left(L_1 \sum_{i,j=1}^d\Grad Q_{ij}\cdot\Grad\varphi_{ij}  + \frac{L_2+L_3}{2} \sum_{i,j,k=1}^d \left(\partial_{k}Q_{jk}\partial_i \varphi_{ij} + \partial_{k}Q_{ik}\partial_j\varphi_{ij} -\frac{2}{d} \, \partial_{i}Q_{ki} \partial_k\varphi_{jj}\right)\right)dxdt \\
		&\quad+M\int_0^T\int_\Omega \left(a Q - b\left((Q^2) -\frac{1}{d} \tr(Q^2)I\right)+c \tr(Q^2)Q\right):\varphi \, dxdt,
		\end{split}
		\end{equation}
		for all smooth $\varphi = (\varphi_{ij})_{i,j=1}^d:[0,T]\times \Omega\to \R^{d\times d}$ that are compactly supported within $\dom$ for almost every $t\in [0,T]$. Furthermore, $Q$ satisfies the energy inequality
		\begin{multline}\label{eq:weakform_energy_inequality}
		 \frac12\int_\Omega L_1|\nabla Q(t,x)|^2  +( L_2+L_3)|\Div Q(t,x)|^2+2\mathcal{F}_B\left(Q(t,x)\right)\,dx\\
		 \leq \frac12\int_\Omega L_1|\nabla Q_0|^2+(L_2+L_3)|\Div Q_0|^2+2\mathcal{F}_B\left(Q_0\right)\,dx-M\int_0^t\int_\Omega |H(s,x)|^2\,dx\,ds,
	\end{multline}
		for every $t\in[0,T]$.
	\end{definition}
Similarly, we define weak solutions of the reformulation~\eqref{eq:reformulation}:
\begin{definition}
	\label{def:weakreformulation}
		By a weak solution of~\eqref{eq:reformulation}, we mean a pair of functions $Q:[0,T]\times\Omega\to\R^{d\times d}$ and $r:[0,T]\times\dom\to\R$, with $Q(t,x)$ trace-free and symmetric for every $(t,x)$, and satisfying
	\begin{equation*}
	Q\in L^\infty(0,T;H^1(\dom)),\quad Q_t\in L^2([0,T]\times\Omega), \quad r\in L^\infty(0,T;L^2(\dom))
	\end{equation*}
	and
	\begin{equation}
	\label{eq:weakformQr}
	\begin{split}
	&\int_0^T\int_\Omega Q :\partial_t\varphi dx dt-\int_\Omega Q(T,x):\varphi(T,x) dx +\int_{\Omega}Q_0(x):\varphi(0,x) dx\\
	& = M \int_0^T\int_\Omega\left(L_1 \sum_{i,j=1}^d\Grad Q_{ij}\cdot\Grad\varphi_{ij}  + \frac{L_2+L_3}{2} \sum_{i,j,k=1}^d \left(\partial_{k}Q_{jk}\partial_i \varphi_{ij} + \partial_{k}Q_{ik}\partial_j\varphi_{ij} -\frac{2}{d} \, \partial_{i}Q_{ki} \partial_k\varphi_{jj}\right)\right)dxdt \\
	&\quad+M\int_0^T\int_\Omega r P(Q):\varphi \, dxdt,
	\end{split}
	\end{equation}
	and
	\begin{equation}
	\label{eq:weakr}
	\int_0^T\int_\dom r\, \phi_t dxdt - \int_\dom r(T,x) \phi(T,x) dx +\int_\dom r_0(x)\phi(0,x) dx =- \int_0^T\int_\dom P(Q):Q_t \,\phi \,dx dt,
	\end{equation}
	where $r_0 = r(Q_0)$ and $P(Q)$ is defined in~\eqref{eq:defP}, for all smooth $\varphi = (\varphi_{ij})_{i,j=1}^d:[0,T]\times \Omega\to \R^{d\times d}$ and $\phi:[0,T]\times \dom\to \R$ that are compactly supported within $\dom$ for every $t\in [0,T]$. Furthermore, $(Q,r)$ satisfies for a.e. $t\in [0,T]$ the energy inequality
	\begin{multline}\label{eq:weakr_energy_inequality}
	    \frac12\int_\Omega L_1|\nabla Q(t,x)|^2 +(L_2+L_3) |\Div Q(t,x)|^2+|r(t,x)|^2\,dx\\
\leq 	  \frac12\int_\Omega L_1|\nabla Q_0|^2 +(L_2+L_3) |\Div Q_0|^2+|r_0|^2\,dx-M\int_0^t\int_\Omega |H(s,x)|^2\,dx\,ds 
	\end{multline}

\end{definition}
\section{The numerical scheme}
We start by introducing notation to define our numerical scheme.
We let $\Delta t>0$ be a time step size and $t^n:=n\Delta t$ time levels at which we intend to compute approximations. For the ease of notation, we present the scheme for the case $\Omega =[0,1]^3$ and $h>0$ is a uniform grid size in each spatial dimension. Extensions to square prisms of different side lengths and nonuniform grid sizes are not hard but notationally cumbersome, therefore we restrict our analysis to the cube in $\mathbb{R}^3$ and uniform mesh sizes. The 2D case can easily be derived from the 3D scheme presented here. 
 We let $x_{ijk}=(x_i,y_j,z_k)=(i h,j h,k h)$ be grid points, $i,j,k=0,\dots, N+1$, with $N+1=1/h\in \N$.
For approximations $(f_{ijk})_{ijk=0}^{N+1}$ on this grid, we define the averages
\begin{equation*}
f_{ijk}^{\hlfstp}=\frac{f_{ijk}^{n+1}+f_{ijk}^n}{2},\quad \overline{f_{ijk}}^{\hlfstp} = \frac{3}{2}f_{ijk}^{n} - \frac{1}{2}f_{ijk}^{n-1}
\end{equation*}
 and difference operators:
 \begin{equation}\label{def:operator}
\begin{split}
&\quad D_t^\pm f^n_{ijk} =\pm \frac{f^{n\pm 1}_{ijk}-f^n_{ijk}}{\Delta t}, \\
D^\pm_1 f^n_{ijk} = \pm\frac{f^n_{i\pm1,j,k}-f^n_{ijk}}{h},&\quad  D^\pm_2 f^n_{ijk} = \pm\frac{f^n_{i,j\pm1,k}-f^n_{ijk}}{h},\quad D^\pm_3 f^n_{ijk} = \pm\frac{f^n_{i,j,k\pm1}-f^n_{ijk}}{h},\\
 D^c_1 f^n_{ijk} = \frac{f^n_{i+1,j,k}-f^n_{i-1,j,k}}{2h},&\quad
 D^c_2 f^n_{ijk} = \frac{f^n_{i,j+1,k}-f^n_{i,j-1,k}}{2h},\quad D^c_3 f^n_{ijk} = \frac{f^n_{i,j,k+1}-f^n_{i,j,k-1}}{2h},
\end{split} \end{equation}
for $i,j,k=1,\cdots,N$. 
We will also need the discrete gradient, Laplacian and divergence operators:
\begin{align*}
\Grad_h^\pm f_{ijk}&=(D^\pm_1 f_{ijk},D^\pm_2 f_{ijk},D^\pm_3 f_{ijk})^\top,\\
\Delta_h f_{ijk} = \sum_{\alpha=1}^3 D_\alpha^-D_\alpha^+ f_{ijk},& \quad
 (\Div_h f_{ijk})_\beta = \sum_{\alpha=1}^3 D^c_\alpha (f_{ijk})_{\alpha\beta},
\end{align*}
where $(f_{ijk})_{\alpha\beta}$ is the $(\alpha,\beta)$-entry of the $3\times 3$-matrix $f_{ijk}$.
We approximate the initial data using cell averages:
\begin{equation*}
Q_{ijk}^0 =\frac{1}{h^3}\int_{\mathcal{C}_{ijk}} Q_0(x) dx,\quad r_{ijk}^0 = \frac{1}{h^3}\int_{\mathcal{C}_{ijk}}r(Q_0(x))dx ,\quad  i,j,k=1,\dots ,N,
\end{equation*}
where $\mathcal{C}_{ijk}=[x_i-0.5h,x_i+0.5h)\times[y_j-0.5h,y_j+0.5h)\times [z_k-0.5h,z_k+0.5h)$, for $x_{ijk}=(x_i,y_j,z_k)$,
and use Dirichlet boundary conditions
\begin{equation}
\label{eq:dirchlet}
Q_{0,j,k}=Q_{N+1,j,k}=Q_{i,0,k}=Q_{i,N+1,k}=Q_{i,j,0}=Q_{i,j,N+1}=0,\quad i,j,k=0, 1,\dots,N,N+1.
\end{equation}
For ease of notation throughout, we will also impose boundary conditions on ghost nodes 
\begin{equation}\label{eq:ghostpoints}
    Q_{-1,j,k} = Q_{N+2,j,k} = Q_{i,-1,k} = Q_{i,N+2,k}=Q_{i,j,-1} = Q_{i,j,N+2} =0,\quad i,j,k=0,1,\dots,N,N+1.
\end{equation}
We then propose the following method
\begin{equation} \label{3d_mc_nf_d_scheme}
\begin{cases}
D_t^+Q_{ijk}^n = M\left(L_1 \Delta_h Q_{ijk}^{\hlfstp} - r_{ijk}^{\hlfstp}\overline{P}_{ijk}^{\hlfstp} + \frac{L_2+L_3}{2}\alpha_{ijk}^{\hlfstp}\right) := MH_{ijk}^{\hlfstp} \\
r_{ijk}^{n+1} - r_{ijk}^n = \overline{P}_{ijk}^{\hlfstp}:(Q_{ijk}^{n+1}-Q_{ijk}^n)
\end{cases}
\end{equation}
Here, $Q_{ijk}^n$ is an approximation for $Q$ and $r^n_{ijk}$ is an approximation of $r$ at spatial point $(x_i, y_j, z_k)$ and time step $n$.
We defined $\alpha_{ijk}^{\hlfstp} = \frac{\alpha^{n+1}+\alpha^n}{2}$ where $\alpha^n = \alpha_h(Q_{ijk}^n)$ and $\alpha^{n+1} = \alpha_h(Q_{ijk}^{n+1})$ and $\alpha_h$ is a discretization of $\alpha(Q)$:
\begin{equation} \label{alpha_disc}
\left(\alpha_h(Q_{ijk}^n)\right)_{ws} = \sum_{\beta=1}^3 \left[D_w^c D_\beta^c \left(Q_{ijk}^n\right)_{s\beta} + D_{s}^c D_{\beta}^c \left(Q_{ijk}^n \right)_{w\beta} \right]- \frac{2}{3} \sum_{\beta,\gamma=1}^3 D_{\beta}^c D_{\gamma}^c \left(Q_{ijk}^n\right)_{\beta\gamma} \delta_{ws}
\end{equation}
where the notation $(Q_{ijk}^n)_{ws}$ indicates the element in row $w$ and column $s$ of the matrix $Q_{ijk}^n$. 
\section{Analysis of the numerical scheme}
For the proof of energy stability of this scheme, we will need the following useful lemma which is proved in the appendix (as Lemma~\ref{a:sbp}):
\begin{lemma} \label{sbp}
Let $A_{ijk}$ and $B_{ijk}$ be scalar quantities at grid point $(x_i, y_j, z_k)$ such that $A_{ijk} = 0$ at boundary values, i.e. boundary conditions~\eqref{eq:dirchlet},~\eqref{eq:ghostpoints}.
Then 
\begin{equation*}
\sum_{i,j,k=0}^{N+1} A_{ijk} D_\beta^+ B_{ijk} = -\sum_{i,j,k=0}^{N+1} B_{ijk} D_\beta^- A_{ijk},\quad  
\quad \sum_{i,j,k=0}^{N+1} A_{ijk} D_\beta^- B_{ijk} = -\sum_{i,j,k=0}^{N+1} B_{ijk} D_\beta^+ A_{ijk},
\end{equation*}
and
\begin{equation*}
    \sum_{i,j,k=0}^{N+1} A_{ijk} D_\beta^c B_{ijk} = -\sum_{i,j,k=0}^{N+1} B_{ijk} D_\beta^c A_{ijk}.
\end{equation*}
for $\beta = 1, 2 \text{ or } 3$.
\end{lemma}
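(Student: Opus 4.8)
The plan is to prove this summation-by-parts identity directly from the definitions of the difference operators by reindexing the sums and exploiting the vanishing of $A_{ijk}$ on the boundary and ghost nodes. The three identities all have the same flavor, so I would establish the first one carefully and indicate that the others follow by an identical argument. By symmetry in the spatial directions it suffices to treat a single direction, say $\beta=1$; the cases $\beta=2,3$ are verbatim with the index $i$ replaced by $j$ or $k$.

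First I would expand the left-hand side of the forward/backward identity. Writing out $D_1^+ B_{ijk} = (B_{i+1,j,k}-B_{ijk})/h$ and summing over $i$ from $0$ to $N+1$, I would split the sum into two pieces: $\frac{1}{h}\sum_i A_{ijk}B_{i+1,j,k}$ and $-\frac{1}{h}\sum_i A_{ijk}B_{ijk}$. In the first piece I would perform the shift of summation index $i\mapsto i-1$, so that $\sum_{i=0}^{N+1} A_{ijk}B_{i+1,j,k} = \sum_{i=1}^{N+2} A_{i-1,j,k}B_{ijk}$. The goal is to recognize the resulting expression as $-\sum_i B_{ijk}D_1^- A_{ijk} = -\frac{1}{h}\sum_i B_{ijk}(A_{ijk}-A_{i-1,j,k})$, and the boundary terms generated by the index shift are exactly the ones that must vanish.

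The key step, and the only place the hypotheses enter, is controlling the boundary contributions produced by the reindexing. After shifting, the first piece runs over $i=1,\dots,N+2$ rather than $i=0,\dots,N+1$, so I pick up the terms at $i=0$ (from the original range) and $i=N+2$ (from the shifted range). These extra terms involve the factors $A_{0,j,k}$, $A_{N+1,j,k}$, $A_{-1,j,k}$, or $A_{N+2,j,k}$ (depending on which way the discrepancy falls), every one of which is zero by the Dirichlet conditions~\eqref{eq:dirchlet} together with the ghost-node conditions~\eqref{eq:ghostpoints}. Here it is essential that $A$ is the factor carrying the boundary hypothesis, since $B$ is arbitrary and need not vanish. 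Once these terms are discarded, the two remaining sums recombine into $-\sum_i B_{ijk}D_1^- A_{ijk}$, which is the claimed identity. The second identity (with $D^-$ on the left and $D^+$ on the right) follows by the same reindexing in the opposite direction, and the centered identity follows by averaging the two, since $D_\beta^c = \frac{1}{2}(D_\beta^+ + D_\beta^-)$ and the sign flips precisely so that $A$ and $B$ both end up differentiated by $D^c$ with a single overall minus sign.

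The main obstacle, though it is bookkeeping rather than conceptual, is being careful that the index shifts land exactly on nodes covered by the boundary and ghost-point hypotheses, so that no nonzero boundary term is inadvertently dropped or retained. In particular one must verify that the shift never requires the value of $A$ at a node outside the set $\{-1,0,\dots,N+1,N+2\}$; since the forward and backward differences each shift by only one index and the sum starts at $0$ and ends at $N+1$, the extreme nodes encountered are precisely $-1$ and $N+2$, which is why the ghost-node conditions~\eqref{eq:ghostpoints} are needed in addition to~\eqref{eq:dirchlet}. With that accounting done, the identities are immediate.
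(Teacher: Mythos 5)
Your proposal is correct and follows essentially the same route as the paper's proof in the appendix: expand the difference quotient, shift the summation index, and use the vanishing of $A$ on the boundary and ghost nodes to discard the extra terms. The only cosmetic difference is that you obtain the centered identity by averaging the forward and backward identities via $D_\beta^c=\tfrac12(D_\beta^++D_\beta^-)$, whereas the paper reindexes a third time directly; both are equally valid.
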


\subsection{Energy stability}
We start by defining the following norms and semi-norms for difference approximations. For sequences of approximations $\{f_{ijk}\}$, $\{g_{ijk}\}$, $\{A_{ijk}\}$, and $\{B_{ijk}\}$ of scalar-or vector-valued functions $f,g:\dom\to \R^d$ and matrix-valued functions $A,B:\dom\to \R^{d\times d}$, defined on our grid, we let
\begin{align*}
\innprod{A}{B}_h = h^3 \sum_{i,j,k= 0}^{N+1} A_{ijk}:B_{ijk},&\qquad \innprod{f}{g}_h = h^3 \sum_{i,j,k= 0}^{N+1} f_{ijk}\cdot g_{ijk},\\
\norm{A}_h^2 = h^3 \sum_{i,j,k=0}^{N+1} |A_{ijk}|_F^2,&\qquad \norm{f}_h^2 = h^3 \sum_{i,j,k=0}^{N+1} |f_{ijk}|^2,\\
\norm{\nabla_h A}_{h}^2 &= \sum_{m=1}^d \norm{D^-_m A}_h^2.
\end{align*}
We start by using Lemma~\ref{sbp} to show some simple summation by parts identities that will be useful later in the proofs of energy stability of the scheme.
\begin{lemma}
	\label{lem:sbplaplace}
	Let $\{A_{ijk}\}$ and $\{B_{ijk}\}$ be grid functions satisfying homogeneous Dirichlet boundary conditions ~\eqref{eq:dirchlet}. Then
	\begin{equation*}
	\langle A, \Delta_{h} B\rangle_{h} = -\langle \nabla_{h} A, \nabla_{h} B\rangle_{h} 
	\end{equation*}
\end{lemma}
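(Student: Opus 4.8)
The plan is to reduce the claimed three-dimensional summation-by-parts identity for the discrete Laplacian to the one-dimensional building blocks already provided by Lemma~\ref{sbp}. Recall that $\Delta_h B_{ijk} = \sum_{m=1}^{d} D_m^- D_m^+ B_{ijk}$, so by linearity of the discrete inner product $\langle\cdot,\cdot\rangle_h$ it suffices to establish, for each fixed direction $m\in\{1,2,3\}$, the single-direction identity
\begin{equation*}
\langle A, D_m^- D_m^+ B\rangle_h = -\langle D_m^+ A, D_m^+ B\rangle_h,
\end{equation*}
and then sum over $m$; the right-hand side will assemble into $-\sum_{m=1}^d \langle D_m^+ A, D_m^+ B\rangle_h$, which is exactly $-\langle \nabla_h A, \nabla_h B\rangle_h$ once we match the semi-norm convention.

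First I would write out $\langle A, D_m^- D_m^+ B\rangle_h = h^3 \sum_{i,j,k=0}^{N+1} A_{ijk} : (D_m^- D_m^+ B)_{ijk}$ (entrywise, the matrix case follows from the scalar case applied to each of the $d^2$ components, so I may as well argue with scalar grid functions $A_{ijk}, B_{ijk}$). Treating $C_{ijk} := D_m^+ B_{ijk}$ as a new grid function, the summand is $A_{ijk}\, D_m^- C_{ijk}$. I would then apply the second identity of Lemma~\ref{sbp}, namely $\sum A_{ijk} D_m^- C_{ijk} = -\sum C_{ijk} D_m^+ A_{ijk}$, which is licensed precisely because $A$ vanishes on the boundary and ghost nodes. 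Substituting back $C = D_m^+ B$ gives $-\sum (D_m^+ B_{ijk})(D_m^+ A_{ijk})$, i.e. $\langle A, D_m^- D_m^+ B\rangle_h = -\langle D_m^+ A, D_m^+ B\rangle_h$ for that direction.

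The one subtlety I would flag is the bookkeeping at the boundary: Lemma~\ref{sbp} as stated requires its "$A$"-argument to vanish on the boundary, and here that role is played by $A_{ijk}$ directly in the summation-by-parts step, so the hypothesis that $\{A_{ijk}\}$ satisfies the homogeneous Dirichlet conditions~\eqref{eq:dirchlet} (extended to ghost nodes via~\eqref{eq:ghostpoints}) is exactly what is needed. No boundary hypothesis on $B$ is consumed by this step, although the stated lemma assumes both vanish; I would simply note that $B$'s boundary behavior is irrelevant here and the claim holds under the given assumptions. Summing the per-direction identities over $m=1,\dots,d$ and recognizing $\norm{\nabla_h A}_h^2 = \sum_m \norm{D_m^- A}_h^2$ forces me to reconcile $D_m^+$ versus $D_m^-$ in the definition of $\langle \nabla_h A, \nabla_h B\rangle_h$; I expect this to be the only genuine point requiring care, and it is resolved by checking that the inner-product extension of the semi-norm $\norm{\nabla_h\cdot}_h$ is defined consistently with whichever of $D_m^\pm$ appears, after which the identity closes. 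The argument is otherwise entirely mechanical, with the real content residing in Lemma~\ref{sbp}.
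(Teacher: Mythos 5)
Your proof is correct and follows essentially the same route as the paper: expand $\Delta_h$ direction by direction and apply Lemma~\ref{sbp} once per direction. The only difference is the $D_m^+$ versus $D_m^-$ mismatch you flag at the end; the paper sidesteps it by using $D_m^- D_m^+ = D_m^+ D_m^-$ and applying the \emph{first} identity of Lemma~\ref{sbp} (rather than the second), which lands directly on $-\sum_m\langle D_m^- A, D_m^- B\rangle_h$ and so matches the convention $\norm{\nabla_h A}_h^2=\sum_m\norm{D_m^- A}_h^2$ with no index-shift argument needed.
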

\begin{proof}
	We write	
	\begin{equation*}
	\innprod{A}{\Delta_h B}_h  =h^3 \sum_{i,j,k=0}^{N+1}\sum_{\alpha=1}^3 A_{ijk}:(D_\alpha^+ D_\alpha^- B_{ijk}) = - h^3\sum_{i,j,k=0}^{N+1}\sum_{\alpha=1}^3 (D_\alpha^- A_{ijk}):( D_\alpha^- B_{ijk})=-\innprod{\Grad_h A}{\Grad_h B}_h,
	\end{equation*} 
	where we used Lemma~\ref{sbp} and the boundary conditions for the second equality.
\end{proof}
For the $\alpha$-term, we have
\begin{lemma}
	\label{lem:sbpalpha}
	Let $\{A_{ijk}\}$ and $\{B_{ijk}\}$ be symmetric and trace-free grid functions satisfying homogeneous Dirichlet boundary conditions,~\eqref{eq:dirchlet}. Then
	\begin{equation*}
	\innprod{A}{\alpha_h(B)}_h = -2\innprod{\Div_h A}{\Div_h B}_h.
	\end{equation*}
\end{lemma}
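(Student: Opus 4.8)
The plan is to expand the inner product using the definition~\eqref{alpha_disc} of $\alpha_h$, exploit the symmetry and trace-freeness of $A$ and $B$ to collapse the expression, and then apply the central-difference summation-by-parts identity from Lemma~\ref{sbp} to transfer one derivative off of $B$, recognizing what remains as a product of discrete divergences.

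First I would write $\innprod{A}{\alpha_h(B)}_h = h^3 \sum_{i,j,k} \sum_{w,s} (A_{ijk})_{ws}\,(\alpha_h(B)_{ijk})_{ws}$ and substitute~\eqref{alpha_disc}, producing three groups of terms. The term carrying $\delta_{ws}$ contracts against $\sum_w (A_{ijk})_{ww} = \tr(A_{ijk})$, which vanishes pointwise because $A$ is trace-free, so this term drops out entirely. For the remaining two terms, the relabelling $w \leftrightarrow s$ together with the symmetry $(A_{ijk})_{ws} = (A_{ijk})_{sw}$ shows they are identical, so the whole expression reduces to
\[
\innprod{A}{\alpha_h(B)}_h = 2h^3 \sum_{i,j,k} \sum_{w,s,\beta} (A_{ijk})_{ws}\, D_w^c D_\beta^c (B_{ijk})_{s\beta}.
\]

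Next, for fixed $s,\beta$ I treat $(A_{ijk})_{ws}$, which vanishes on the boundary by~\eqref{eq:dirchlet}, and the scalar $D_\beta^c (B_{ijk})_{s\beta}$ as the two grid functions in the central-difference identity of Lemma~\ref{sbp}, applying it in the $w$-direction to get $\sum_{i,j,k} (A_{ijk})_{ws}\, D_w^c\big(D_\beta^c (B_{ijk})_{s\beta}\big) = -\sum_{i,j,k} \big(D_w^c (A_{ijk})_{ws}\big)\big(D_\beta^c (B_{ijk})_{s\beta}\big)$. Summing over $w,s,\beta$ and reorganizing yields
\[
-2h^3 \sum_{i,j,k} \sum_s \Big(\sum_w D_w^c (A_{ijk})_{ws}\Big)\Big(\sum_\beta D_\beta^c (B_{ijk})_{s\beta}\Big).
\]
The first factor is exactly $(\Div_h A_{ijk})_s$ by definition of $\Div_h$; for the second I use the symmetry of $B$ to write $\sum_\beta D_\beta^c (B_{ijk})_{s\beta} = \sum_\beta D_\beta^c (B_{ijk})_{\beta s} = (\Div_h B_{ijk})_s$, so the sum over $s$ assembles the dot product $(\Div_h A_{ijk})\cdot(\Div_h B_{ijk})$, giving $-2\innprod{\Div_h A}{\Div_h B}_h$ as claimed.

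I expect no analytic difficulty here; the work is entirely bookkeeping, and the main thing to get right is tracking the index conventions of $\alpha_h$ against that of $\Div_h$. It is worth noting that all three structural hypotheses are genuinely used: trace-freeness of $A$ to annihilate the $\delta_{ws}$ term, symmetry of $A$ to merge the two surviving terms, and symmetry of $B$ to match the summed index of $\sum_\beta D_\beta^c (B_{ijk})_{s\beta}$ to the row-sum convention built into $\Div_h$. The only care needed beyond that is checking that the boundary hypothesis of Lemma~\ref{sbp} is satisfied, which it is since $A$ obeys~\eqref{eq:dirchlet}.
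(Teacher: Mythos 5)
Your proof is correct and follows essentially the same route as the paper's: kill the $\delta_{ws}$ term via trace-freeness of $A$, use the central-difference summation-by-parts identity of Lemma~\ref{sbp} on the remaining terms, and invoke symmetry to identify the result with $-2\innprod{\Div_h A}{\Div_h B}_h$. The only (immaterial) difference is that you merge the two surviving terms by symmetry of $A$ before summing by parts, whereas the paper sums by parts on both terms first and then identifies them.
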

\begin{proof}
	We compute (denoting $\alpha_{ijk}:= \alpha_h(B_{ijk})$)
	\begin{align*}
	\innprod{A}{\alpha_h(B)}_h
	&= h^3\sum_{i,j,k=0}^{N+1} \sum_{w,s=1}^3 \left(A_{ijk}\right)_{ws} \left( \alpha_{ijk}\right)_{ws} \\
	&= h^3\sum_{i,j,k=0}^{N+1} \Big(\sum_{w,s=1}^3 \sum_{\beta=1}^3\left[ \left(A_{ijk}\right)_{ws} D_w^c D_\beta^c \left(B_{ijk}\right)_{s\beta} + \left(A_{ijk}\right)_{ws}D_s^c D_\beta^c \left(B_{ijk}\right)_{w\beta} \right]\\
	&\hphantom{= h^d\sum_{i,j,k=1}^N \Big(}-\frac{2}{3} \sum_{w,s=1}^3 \sum_{\beta,\gamma=1}^3 \left(A_{ijk}\right)_{ws} D_\beta^c D_\gamma^c \left(B_{ijk}\right)_{\beta \gamma} \delta_{ws}\Big) 
	\end{align*}
	Focusing on the last term of the inner sum and using that $\delta_{ws} = 1 \Leftrightarrow w=s$
	\begin{align}\label{eq:tracefreealpha}
	\begin{split}
	\sum_{w,s=1}^3 \sum_{\beta,\gamma=1}^3 \left(A_{ijk}\right)_{ws} D_\beta^c D_\gamma^c \left(B_{ijk}\right)_{\beta \gamma} \delta_{ws} &= \sum_{\beta,\gamma=1}^3 \sum_{w=1}^3 \left(A_{ijk}\right)_{ww} D_\beta^c D_\gamma^c \left(B_{ijk}\right)_{\beta \gamma}\\
	&= \sum_{\beta,\gamma=1}^3 D_\beta^c D_\gamma^c \left(B_{ijk}\right)_{\beta \gamma} \sum_{w=1}^3 \left(A_{ijk}\right)_{ww} \\
	&= 0,
	\end{split}
	\end{align}
	where in the last equality we used that $A_{ijk}$ is trace-free. 
	For the first two terms, we use Lemma~\ref{sbp} and the symmetry assumption to obtain
	\begin{align*}
	\begin{split}
	\innprod{A}{\alpha_h(B)}_h &= -h^3\sum_{i,j,k=0}^{N+1} \sum_{w,s,\beta=1}^3\left[ D_w^c \left(A_{ijk}\right)_{ws} D_\beta^c \left(B_{ijk}\right)_{s\beta} + D_s^c \left(A_{ijk}\right)_{ws} D_\beta^c \left(B_{ijk}\right)_{w\beta}\right]  \\
	&=  -h^3\sum_{i,j,k=0}^{N+1} \sum_{w,s,\beta=1}^3\left[ D_w^c \left(A_{ijk}\right)_{sw} D_\beta^c \left(B_{ijk}\right)_{s\beta} + D_s^c \left(A_{ijk}\right)_{ws} D_\beta^c \left(B_{ijk}\right)_{w\beta}\right]  \\
	& = -2\,\innprod{\Div_h A}{\Div_h B}_h,
	\end{split}
	\end{align*}
	which completes the proof.
\end{proof}

Next, we show that the scheme preserves the trace-free and symmetry property of $Q$. To this end, we rewrite the scheme~\eqref{3d_mc_nf_d_scheme} as
$\mathbb{A}(Q^{n+1}) = \mathbb{F}(Q^n)$ where 
\begin{equation}\label{eq:AF}
\begin{cases}
\mathbb{A}(Q^{n+1}_{ijk}) &= \frac{Q^{n+1}_{ijk}}{\Delta t} - \frac{ML_1}{2} \Delta_h Q^{n+1}_{ijk} + \frac{M}{2}\left( \overline{P}^{\hlfstp}_{ijk} : Q^{n+1}_{ijk} \right) \overline{P}^{\hlfstp}_{ijk} - M \frac{L_2+L_3}{4} \alpha_h(Q^{n+1}_{ijk}) \\
\mathbb{F}(Q^n_{ijk}) &= \frac{Q^n_{ijk}}{\Delta t} + \frac{ML_1}{2} \Delta_h Q^n_{ijk} + \frac{M}{2} \left( \overline{P}^{\hlfstp}_{ijk} : Q^n_{ijk} \right) \overline{P}^{\hlfstp}_{ijk} - Mr^n_{ijk} \overline{P}^{\hlfstp}_{ijk} + M \frac{L_2+L_3}{4} \alpha_h(Q^n_{ijk})
\end{cases}
\end{equation}
	where we have used that 
\begin{equation*}
r^{\hlfstp}_{ijk} = \frac{1}{2} \overline{P}^{\hlfstp}_{ijk} : Q^{n+1}_{ijk} - \frac{1}{2} \overline{P}^{\hlfstp}_{ijk} : Q^n_{ijk} + r^n_{ijk}.
\end{equation*}
\begin{proposition}
	If $Q^{n}$ and  $Q^{n-1}$ are trace-free and symmetric, then $Q^{n+1}$ computed by the scheme~\eqref{3d_mc_nf_d_scheme} is also trace-free and symmetric.
\end{proposition}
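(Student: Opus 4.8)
The plan is to exploit the reformulation $\mathbb{A}(Q^{n+1}) = \mathbb{F}(Q^n)$ in~\eqref{eq:AF} together with the key observation that, since $\overline{P}^{\hlfstp}_{ijk}$ depends only on $Q^n$ and $Q^{n-1}$ (not on $Q^{n+1}$), the operator $\mathbb{A}$ is \emph{linear} in its argument $Q^{n+1}$. I would organize the proof into two algebraic preliminaries followed by one discrete energy argument. First I would verify that $\overline{P}^{\hlfstp}_{ijk}$ is symmetric and trace-free: from~\eqref{eq:defS}, $S(Q)$ is assembled from $aQ$, the trace-corrected term $b(Q^2 - \tfrac1d\tr(Q^2)I)$, and $c\,\tr(Q^2)Q$, each of which is symmetric and trace-free whenever $Q$ is (using $(Q^2)^\top = Q^2$ and the explicit subtraction of the trace part), while $r(Q)$ in~\eqref{eq:defP} is a scalar; hence $P(Q)=S(Q)/r(Q)$ is symmetric and trace-free. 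Since $Q^n,Q^{n-1}$ are symmetric and trace-free by hypothesis, so is $\overline{P}^{\hlfstp}_{ijk}=\tfrac32 P(Q^n_{ijk})-\tfrac12 P(Q^{n-1}_{ijk})$.

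Next I would record the structural properties of the discrete operators appearing in~\eqref{eq:AF}. The operator $\Delta_h$ acts entrywise, so it commutes with transposition and with the trace and therefore preserves both symmetry and trace-freeness. The term $(\overline{P}^{\hlfstp}_{ijk}:Q)\,\overline{P}^{\hlfstp}_{ijk}$ is a scalar multiple of $\overline{P}^{\hlfstp}_{ijk}$, hence symmetric and trace-free for \emph{every} grid matrix $Q$. Finally, a direct index relabeling in~\eqref{alpha_disc} shows $\alpha_h(Q)$ is symmetric under $w\leftrightarrow s$ and trace-free for arbitrary $Q$, by the same computation as in~\eqref{eq:tracefreealpha}. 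Combining these facts with the symmetry and trace-freeness of $\overline{P}^{\hlfstp}$, of $Q^n$, and of $r^n_{ijk}\overline{P}^{\hlfstp}_{ijk}$, I conclude that $\mathbb{F}(Q^n_{ijk})$ is symmetric and trace-free.

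The final step is to show the solution $Q^{n+1}$ of the linear system inherits these properties. I would set $W_{ijk} = \tfrac12\!\left(Q^{n+1}_{ijk} - (Q^{n+1}_{ijk})^\top\right)$ and $\tau_{ijk} = \tr(Q^{n+1}_{ijk})$. Taking the antisymmetric part of $\mathbb{A}(Q^{n+1})=\mathbb{F}(Q^n)$ annihilates every term except $\tfrac1{\Delta t}Q^{n+1}-\tfrac{ML_1}{2}\Delta_h Q^{n+1}$ (the right-hand side being symmetric, and both the $(\overline{P}^{\hlfstp}:Q^{n+1})\overline{P}^{\hlfstp}$ and $\alpha_h$ terms being symmetric), so
\[
\frac{1}{\Delta t}W_{ijk} - \frac{ML_1}{2}\Delta_h W_{ijk} = 0,
\]
with $W$ inheriting the homogeneous Dirichlet values~\eqref{eq:dirchlet}. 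Taking the trace of the system and using that $\overline{P}^{\hlfstp}$ and $\alpha_h(\cdot)$ are trace-free yields the identical scalar equation for $\tau$. Pairing each with $W$, respectively $\tau$, in $\innprod{\cdot}{\cdot}_h$ and applying the summation-by-parts identity of Lemma~\ref{lem:sbplaplace} gives
\[
\frac{1}{\Delta t}\norm{W}_h^2 + \frac{ML_1}{2}\norm{\nabla_h W}_h^2 = 0, \qquad \frac{1}{\Delta t}\norm{\tau}_h^2 + \frac{ML_1}{2}\norm{\nabla_h \tau}_h^2 = 0.
\]
As $\Delta t, M, L_1 > 0$, both summands are nonnegative, forcing $W\equiv 0$ and $\tau\equiv 0$, so $Q^{n+1}$ is symmetric and trace-free.

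I expect the point requiring the most care to be the bookkeeping in the second step: one must confirm that the terms $(\overline{P}^{\hlfstp}_{ijk}:Q^{n+1}_{ijk})\overline{P}^{\hlfstp}_{ijk}$ and $\alpha_h(Q^{n+1}_{ijk})$ are symmetric and trace-free for an \emph{arbitrary} input matrix, not merely for symmetric trace-free input, since it is precisely this that guarantees they do not feed into the antisymmetric or trace components of the equation. Once this is established, the coercive homogeneous estimates isolating $W$ and $\tau$ are routine.
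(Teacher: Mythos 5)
Your proposal is correct and follows essentially the same route as the paper: exploit the linearity of $\mathbb{A}$, reduce to homogeneous equations for $\tr(Q^{n+1})$ and the skew-symmetric part, and conclude both vanish via summation by parts and the positivity of $\tfrac{1}{\Delta t}\norm{\cdot}_h^2 + \tfrac{ML_1}{2}\norm{\nabla_h\cdot}_h^2$. The only (harmless) difference is in the symmetry step: you observe that $\alpha_h$ returns a symmetric matrix for \emph{arbitrary} input, so the $\alpha_h$ term drops out of the skew-symmetric part immediately, whereas the paper retains an $\alpha_h(V^{n+1})$ term and separately verifies $\innprod{V^{n+1}}{\alpha_h(V^{n+1})}_h = 0$ using the skew-symmetry of $V^{n+1}$.
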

\begin{proof}
	
	Since we assume that $Q^n_{ijk}, Q^{n-1}_{ijk}$ are trace-free, it follows that also $\overline{P}^{\hlfstp}_{ijk}$ is trace-free.
	 Moreover, $\alpha_h(Q)$ is trace-free without any assumptions on $Q$.
Hence, we find that $\tr(\mathbb{F}(Q^n_{ijk})) = 0$.
But since $\mathbb{A}(Q^{n+1}_{ijk}) = \mathbb{F}(Q^n_{ijk})$ then we must have $\tr(\mathbb{A}(Q^{n+1}_{ijk})) = 0$.
Hence,
	\begin{equation*}
	\tr(\mathbb{A}(Q^{n+1}_{ijk})) = \frac{\tr(Q^{n+1}_{ijk})}{\Delta t} - \frac{ML_1}{4} \Delta_h \tr(Q^{n+1}_{ijk}) = 0
	\end{equation*}
	Taking the inner product of this with $\tr(Q^{n+1}_{ijk})$ we then find 
	\begin{align*}
	\frac{\norm{\tr(Q^{n+1})}^2_h}{\Delta t} - \frac{ML_1}{4} \innprod{\Delta_h \tr(Q^{n+1})}{\tr(Q^{n+1})}_h &= 0.
	\end{align*}
	We use Lemma~\ref{sbp} for the second term:
	\begin{align*}
	\begin{split}
	\innprod{\Delta_h \tr(Q^{n+1})}{\tr(Q^{n+1})}_h & = h^3 \sum_{i,j,k=0}^{N+1}\sum_{\ell=1}^3  \left( D^+_\ell D^-_\ell \sum_{w=1}^3 (Q^{n+1}_{ijk})_{ww}\right)  \sum_{s=1}^3 (Q^{n+1}_{ijk})_{ss}\\
	& = -h^3 \sum_{i,j,k=0}^{N+1} \sum_{\ell=1}^3 \left(  D^-_\ell \sum_{w=1}^3 (Q^{n+1}_{ijk})_{ww}\right) \left(D^-_\ell \sum_{s=1}^3 (Q^{n+1}_{ijk})_{ss}\right)\\
	& = -\norm{\nabla_h \tr(Q^{n+1})}_{h}^2.
	\end{split}
	\end{align*}
		Thus we must have that $\tr(Q^{n+1}_{ijk}) = 0$ for all $i,j,k= 1,\dots, N$ and we see that the trace-free condition is preserved.
		
		For the symmetry, we notice that if $Q^n_{ijk}$ and $Q^{n-1}_{ijk}$ are symmetric, then also $\overline{P}^{n+\frac{1}{2}}_{ijk}$ and $\alpha_h(Q^n_{ijk})$ are symmetric. Hence $\mathbb{F}(Q^n_{ijk}) =(\mathbb{F}(Q^n_{ijk}) )^\top$ and therefore $\mathbb{A}(Q^{n+1}_{ijk})=(\mathbb{A}(Q^{n+1}_{ijk}))^\top$. Denoting $V^{n+1}_{ijk}:=Q^{n+1}_{ijk}-(Q^{n+1}_{ijk})^\top$, this implies
	\begin{equation*}
		 \frac{V^{n+1}_{ijk}}{\Delta t} - \frac{ML_1}{2} \Delta_h V^{n+1}_{ijk}-M\frac{L_2+L_3}{4}\alpha_h(V_{ijk}^{n+1}) = 0.
	\end{equation*}	
		Note that $V^{n+1}_{ijk}$ is skew-symmetric and trace-free. 		We take the inner product with $V^{n+1}_{ijk}$ and obtain
		\begin{equation*}
		0=	\frac{\norm{V^{n+1}}^2_h}{\Delta t} - \frac{ML_1}{2} \innprod{\Delta_h V^{n+1}}{V^{n+1}}_h -M\frac{L_2+L_3}{4}\innprod{\alpha_h(V^{n+1})}{V^{n+1}}_h.
		\end{equation*}
		Using Lemma~\ref{lem:sbplaplace}, this can be rewritten as
			\begin{equation}\label{eq:forfaen}
		0=	\frac{\norm{V^{n+1}}^2_h}{\Delta t} + \frac{ML_1}{2} \norm{\Grad_h V^{n+1}}_h^2-M\frac{L_2+L_3}{2}\innprod{\alpha_h(V^{n+1})}{V^{n+1}}_h.
		\end{equation}
		The term involving $\alpha$ on the right hand side is
			\begin{align*}
		\innprod{V^{n+1}}{\alpha_h(V^{n+1})}_h
		&= h^3\sum_{i,j,k=0}^{N+1} \Big(\sum_{w,s=1}^3 \sum_{\beta=1}^3\left[ \left(V^{n+1}_{ijk}\right)_{ws} D_w^c D_\beta^c \left(V^{n+1}_{ijk}\right)_{s\beta} + \left(V^{n+1}_{ijk}\right)_{ws}D_s^c D_\beta^c \left(V^{n+1}_{ijk}\right)_{w\beta} \right]\\
		&\hphantom{= h^d\sum_{i,j,k=1}^N \Big(}-\frac{2}{3} \sum_{w,s=1}^3 \sum_{\beta,\gamma=1}^3 \left(V^{n+1}_{ijk}\right)_{ws} D_\beta^c D_\gamma^c \left(V^{n+1}_{ijk}\right)_{\beta \gamma} \delta_{ws}\Big) \\
		& = h^3\sum_{i,j,k=0}^{N+1} \sum_{w,s=1}^3 \sum_{\beta=1}^3\left[ \left(V^{n+1}_{ijk}\right)_{ws} D_w^c D_\beta^c \left(V^{n+1}_{ijk}\right)_{s\beta} + \left(V^{n+1}_{ijk}\right)_{ws}D_s^c D_\beta^c \left(V^{n+1}_{ijk}\right)_{w\beta} \right],
		\end{align*}
		using that $V^{n+1}$ is trace-free, as in~\eqref{eq:tracefreealpha} (replacing $A$ and $B$ by $V^{n+1}$). Using Lemma~\ref{sbp} and the skew-symmetry of $V^{n+1}$, the remaining terms are:
			\begin{align*}
		\innprod{V^{n+1}}{\alpha_h(V^{n+1})}_h	& =- h^3\sum_{i,j,k=0}^{N+1} \sum_{w,s=1}^3 \sum_{\beta=1}^3\left[ D_w^c\left(V^{n+1}_{ijk}\right)_{ws}  D_\beta^c \left(V^{n+1}_{ijk}\right)_{s\beta} + D_s^c\left(V^{n+1}_{ijk}\right)_{ws} D_\beta^c \left(V^{n+1}_{ijk}\right)_{w\beta} \right]\\
		& =  h^3\sum_{i,j,k=0}^{N+1} \sum_{w,s=1}^3 \sum_{\beta=1}^3\left[ D_w^c\left(V^{n+1}_{ijk}\right)_{sw}  D_\beta^c \left(V^{n+1}_{ijk}\right)_{s\beta} - D_s^c\left(V^{n+1}_{ijk}\right)_{ws} D_\beta^c \left(V^{n+1}_{ijk}\right)_{w\beta} \right]=0
		\end{align*}
		Plugging this into~\eqref{eq:forfaen}, we see that $V^{n+1}_{ijk}=0$ for all $i,j,k$.		
	\end{proof}
The next theorem guarantees the existence of a unique solution of the system of equations~\eqref{3d_mc_nf_d_scheme} (or~\eqref{eq:AF}).
\begin{theorem}
	The operator $\mathbb{A}$ is symmetric and positive definite for grid functions that are symmetric and trace-free. 
\end{theorem}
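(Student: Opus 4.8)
The plan is to regard $\mathbb{A}$ as a linear operator on the finite-dimensional space $\mathcal{S}$ of symmetric, trace-free grid functions $\{U_{ijk}\}$ satisfying the homogeneous boundary conditions~\eqref{eq:dirchlet},~\eqref{eq:ghostpoints}; this is legitimate because $\overline{P}^{\hlfstp}_{ijk}$ is built only from the known values $Q^n,Q^{n-1}$ and is held fixed, so $\mathbb{A}$ in~\eqref{eq:AF} is genuinely linear in its argument. Symmetry and positive definiteness are then understood with respect to the discrete inner product $\innprod{\cdot}{\cdot}_h$, and the strategy is simply to evaluate the associated bilinear form $\innprod{\mathbb{A}(U)}{V}_h$ for $U,V\in\mathcal{S}$, rewrite each of its four contributions in a manifestly symmetric form using the summation-by-parts lemmas already established, and then specialize to $V=U$.

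First I would split $\innprod{\mathbb{A}(U)}{V}_h$ according to the four terms of $\mathbb{A}$. The identity term $\frac{1}{\Delta t}\innprod{U}{V}_h$ is already symmetric. For the Laplacian term I would apply Lemma~\ref{lem:sbplaplace} to obtain $-\frac{ML_1}{2}\innprod{\Delta_h U}{V}_h = \frac{ML_1}{2}\innprod{\nabla_h U}{\nabla_h V}_h$, and for the $\alpha$-term I would apply Lemma~\ref{lem:sbpalpha} (valid because $U,V$ are symmetric and trace-free) to obtain $-\frac{M(L_2+L_3)}{4}\innprod{\alpha_h(U)}{V}_h = \frac{M(L_2+L_3)}{2}\innprod{\Div_h U}{\Div_h V}_h$. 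The remaining rank-one term expands directly as
\begin{equation*}
\frac{M}{2}h^3\sum_{i,j,k=0}^{N+1}\big(\overline{P}^{\hlfstp}_{ijk}:U_{ijk}\big)\big(\overline{P}^{\hlfstp}_{ijk}:V_{ijk}\big),
\end{equation*}
which is visibly symmetric in $U$ and $V$. Collecting these, $\innprod{\mathbb{A}(U)}{V}_h$ is a sum of four terms, each invariant under exchanging $U\leftrightarrow V$, so $\innprod{\mathbb{A}(U)}{V}_h=\innprod{U}{\mathbb{A}(V)}_h$ and symmetry follows.

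For positive definiteness I would set $V=U$ in the expression just obtained, yielding
\begin{equation*}
\innprod{\mathbb{A}(U)}{U}_h = \frac{1}{\Delta t}\norm{U}_h^2 + \frac{ML_1}{2}\norm{\nabla_h U}_h^2 + \frac{M(L_2+L_3)}{2}\norm{\Div_h U}_h^2 + \frac{M}{2}h^3\sum_{i,j,k=0}^{N+1}\big(\overline{P}^{\hlfstp}_{ijk}:U_{ijk}\big)^2.
\end{equation*}
Since $M,L_1,L_2,L_3>0$, all four terms are nonnegative, and the first is strictly positive whenever $U\not\equiv 0$; hence $\innprod{\mathbb{A}(U)}{U}_h>0$ for every nonzero $U\in\mathcal{S}$. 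I do not expect a serious obstacle here: the only points requiring care are the bookkeeping of the signs produced by the two summation-by-parts identities and checking that the hypotheses of Lemma~\ref{lem:sbpalpha} (symmetry and trace-freeness of both arguments) are genuinely met, which holds by restriction to $\mathcal{S}$. Note that the positivity of the discrete mass term $\frac{1}{\Delta t}\norm{U}_h^2$ is what ultimately prevents degeneracy, so no Poincar\'e-type inequality is needed.
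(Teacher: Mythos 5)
Your proposal is correct and follows essentially the same route as the paper: expand $\innprod{\mathbb{A}(Q^1)}{Q^2}_h$ into its four contributions, apply Lemma~\ref{lem:sbplaplace} and Lemma~\ref{lem:sbpalpha} to put the Laplacian and $\alpha$-terms into manifestly symmetric form, and then read off positive definiteness from the diagonal case, with the mass term $\frac{1}{\Delta t}\norm{Q}_h^2$ forcing $Q\equiv 0$ at equality. The sign bookkeeping and the verification of the hypotheses of Lemma~\ref{lem:sbpalpha} are handled exactly as in the paper's argument.
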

\begin{proof}
Let $Q^1=(Q_{ijk}^1)_{ijk}$ and $Q^2=(Q_{ijk}^2)_{ijk}$, then 
\begin{align*} 
\begin{split}
\innprod{\mathbb{A}(Q^1)}{Q^2}_h =& \frac{1}{\Delta t} \innprod{Q^1}{Q^2}_h - \frac{ML_1}{2} \innprod{\Delta_h Q^1}{Q^2}_h + \frac{M}{2} \innprod{\overline{P}^{\hlfstp} : Q^1} {\overline{P}^{\hlfstp} : Q^2}_h\\
& - M \frac{L_2+L_3}{4} \innprod{\alpha_h(Q^1)}{Q^2}_h
\end{split}
\end{align*}
By Lemmas~\ref{lem:sbplaplace} and~\ref{lem:sbpalpha}, we have
\begin{equation*}
\innprod{\Delta_h Q^1}{Q^2}_h = -\innprod{\Grad_hQ^1}{\Grad_h Q^2}_h,\quad \innprod{\alpha_h( Q^1)}{Q^2}_h = -2\innprod{\Div_hQ^1}{\Div_h Q^2}_h.
\end{equation*}
Therefore,
\begin{align*}
\begin{split}
\innprod{\mathbb{A}(Q^1)}{Q^2}_h =& \frac{1}{\Delta t} \innprod{Q^1}{Q^2}_h + \frac{ML_1}{2} \innprod{\Grad_h Q^1}{\Grad_h Q^2}_h + \frac{M}{2} \innprod{\overline{P}^{\hlfstp} : Q^1} {\overline{P}^{\hlfstp} : Q^2}_h\\
& +  \frac{M\,(L_2+L_3)}{2} \,\innprod{\Div^-_h Q^1}{\Div^-_h Q^2}_h,
\end{split}
\end{align*}
which we see is symmetric in $Q^1$ and $Q^2$. Moreover, 
\begin{align*}
\begin{split}
\innprod{\mathbb{A}(Q)}{Q}_h =& \frac{1}{\Delta t} \norm{Q}^2_h + \frac{ML_1}{2} \norm{\Grad_h Q}_h^2 + \frac{M}{2} \norm{\overline{P}^{\hlfstp} : Q}_h^2  +  \frac{M\,(L_2+L_3)}{2} \norm{\Div_h Q}^2_h\geq 0,
\end{split}
\end{align*}
with equality if and only if $Q\equiv 0$.
\end{proof}

By the previous two results, we conclude there is a unique solution $Q^{n+1}$ to $\mathbb{A}(Q^{n+1}) = \mathbb{F}(Q^n)$ that is trace-free and symmetric. 

Next, we will prove an energy estimate for the scheme:
\begin{theorem}\label{thm:energyestimate}
Define the energy 
\begin{equation}\label{eq:energydisc}
E^n = \frac{L_1}{2} \norm{\nabla_h Q^{n}}_{h}^2 + \frac{L_2+L_3}{2} \norm{\Div_h Q^{n}}_h^2 + \frac{1}{2} \norm{r^{n}}_h^2
\end{equation}
where $Q^n, r^n$ solve \eqref{3d_mc_nf_d_scheme}. 
Then 
\begin{equation*}
E^{n+1} - E^n = -\Delta t M \norm{H^{\hlfstp}}_h^2.
\end{equation*}
\end{theorem}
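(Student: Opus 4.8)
The plan is to reproduce the continuous energy identity \eqref{eq:energyQr} at the fully discrete level, exploiting the fact that the \emph{same} extrapolated tensor $\overline{P}^{\hlfstp}_{ijk}$ enters both equations of the scheme \eqref{3d_mc_nf_d_scheme}. First I would split the energy difference into its three constituent pieces,
\[
E^{n+1}-E^n = \frac{L_1}{2}\left(\norm{\nabla_h Q^{n+1}}_h^2 - \norm{\nabla_h Q^n}_h^2\right) + \frac{L_2+L_3}{2}\left(\norm{\Div_h Q^{n+1}}_h^2 - \norm{\Div_h Q^n}_h^2\right) + \frac12\left(\norm{r^{n+1}}_h^2 - \norm{r^n}_h^2\right),
\]
and treat each term with the elementary identity $\norm{a}^2-\norm{b}^2 = \innprod{a-b}{a+b}$, together with $Q^{n+1}-Q^n=\Delta t\,D_t^+Q^n$ and $Q^{n+1}+Q^n = 2Q^{\hlfstp}$.

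For the gradient term this gives $\norm{\nabla_h Q^{n+1}}_h^2 - \norm{\nabla_h Q^n}_h^2 = 2\Delta t\,\innprod{\nabla_h D_t^+Q^n}{\nabla_h Q^{\hlfstp}}_h$, which summation by parts (Lemma~\ref{lem:sbplaplace}) turns into $-2\Delta t\,\innprod{D_t^+Q^n}{\Delta_h Q^{\hlfstp}}_h$. The divergence term is handled identically, producing $2\Delta t\,\innprod{\Div_h D_t^+Q^n}{\Div_h Q^{\hlfstp}}_h$; invoking Lemma~\ref{lem:sbpalpha} (together with the linearity of $\alpha_h$, which yields $\alpha^{\hlfstp}_{ijk}=\alpha_h(Q^{\hlfstp}_{ijk})$) rewrites the divergence difference of squares as $-\Delta t\,\innprod{D_t^+Q^n}{\alpha_h(Q^{\hlfstp})}_h$. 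All these summation-by-parts manipulations are legitimate because the Dirichlet conditions \eqref{eq:dirchlet}, \eqref{eq:ghostpoints} are inherited by the grid functions $D_t^+Q^n$ and $Q^{\hlfstp}$.

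The decisive step is the $r$-term, where the second equation of the scheme is used directly: since $r^{n+1}-r^n = \overline{P}^{\hlfstp}:(Q^{n+1}-Q^n) = \Delta t\,\overline{P}^{\hlfstp}:D_t^+Q^n$ and $r^{n+1}+r^n = 2r^{\hlfstp}$, the difference-of-squares identity gives $\frac12\left(\norm{r^{n+1}}_h^2 - \norm{r^n}_h^2\right) = \Delta t\,\innprod{D_t^+Q^n}{r^{\hlfstp}\overline{P}^{\hlfstp}}_h$. Adding the three contributions, the constants recombine exactly so that
\[
E^{n+1}-E^n = -\Delta t\,\innprod{D_t^+Q^n}{\,L_1\Delta_h Q^{\hlfstp} + \tfrac{L_2+L_3}{2}\alpha_h(Q^{\hlfstp}) - r^{\hlfstp}\overline{P}^{\hlfstp}\,}_h = -\Delta t\,\innprod{D_t^+Q^n}{H^{\hlfstp}}_h,
\]
and substituting the first equation $D_t^+Q^n = MH^{\hlfstp}$ produces the claimed $-\Delta t\,M\norm{H^{\hlfstp}}_h^2$.

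I do not anticipate a genuine obstacle: the only points requiring care are the bookkeeping of the factor $2$ from each difference of squares against the factor $-2$ appearing in Lemma~\ref{lem:sbpalpha}, and the observation that $\overline{P}^{\hlfstp}$ is \emph{the identical} quantity in both equations of \eqref{3d_mc_nf_d_scheme}. It is precisely this internal consistency — and not any approximation accuracy of $\overline{P}^{\hlfstp}$ as a discretization of $P(Q^{\hlfstp})$ — that makes the cross terms telescope and the energy balance close exactly, which is the defining feature of the IEQ construction.
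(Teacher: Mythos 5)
Your proposal is correct and follows essentially the same route as the paper's proof: both rest on pairing the first equation with $H^{\hlfstp}$, the second with $r^{\hlfstp}$, and using Lemmas~\ref{lem:sbplaplace} and~\ref{lem:sbpalpha} so that the shared $\overline{P}^{\hlfstp}$ makes the cross terms cancel. The only difference is direction of bookkeeping (you start from the energy difference and expand via $\norm{a}^2-\norm{b}^2=\innprod{a-b}{a+b}$, the paper starts from the scheme and collapses to differences of squares), which is cosmetic.
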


\begin{proof} We take the inner product of the first equation in \eqref{3d_mc_nf_d_scheme} with $-\Delta t H^{\hlfstp}_{ijk}$, multiply by $h^d$ and sum over all grid points, 
\begin{equation} \label{1stequation}
- \innprod{Q^{n+1}-Q^n}{H^{\hlfstp}}_h = - \Delta t\,M \norm{H^{\hlfstp}}_{h}^2
\end{equation}
Taking the inner product of the second equation with $r^{\hlfstp}_{ijk}$, multiplying by $h^d$, and summing over all grid points, gives 
\begin{align} \label{2ndEquation}
\begin{split}
\innprod{r^{n+1}-r^n}{r^{\hlfstp}}_h &= \innprod{r^{\hlfstp} \overline{P}^{\hlfstp}}{Q^{n+1}-Q^n}_h \\
\Longleftrightarrow\qquad\frac{1}{2} \norm{r^{n+1}}_{h}^2 - \frac{1}{2}\norm{r^n}_{h}^2 &= \innprod{r^{\hlfstp} \overline{P}^{\hlfstp}}{Q^{n+1}-Q^n}_h
\end{split}
\end{align}
Next we shall work with the term $\innprod{Q^{n+1}-Q^n}{H^{\hlfstp}}_h$. 
\begin{align} \label{Q_R_innprod}
\begin{split}
\innprod{Q^{n+1}-Q^n}{H^{\hlfstp}}_h =& \innprod{Q^{n+1}-Q^n}{L_1 \Delta_h Q^{\hlfstp} - q^{\hlfstp}\overline{P}^{\hlfstp} + \frac{L_2+L_3}{2} \alpha^{\hlfstp}}_h \\
=& \frac{L_1}{2} \innprod{Q^{n+1} - Q^n}{\Delta_h Q^{n+1}+\Delta_h Q^n}_h - \innprod{Q^{n+1}-Q^n}{q^{\hlfstp}\overline{P}^{\hlfstp}}_h \\
&+ \frac{L_2+L_3}{4} \innprod{Q^{n+1}-Q^n}{\alpha^{n+1}+\alpha^n}_h 
\end{split}
\end{align}
We shall deal with these terms individually. Using bilinearity, we have 
\begin{equation} \label{expanded1}
\innprod{Q^{n+1} - Q^n}{\Delta_h Q^{n+1}+\Delta_h Q^n}_h = \innprod{Q^{n+1}}{\Delta_h Q^{n+1}}_h + \innprod{Q^{n+1}}{\Delta_h Q^n}_h - \innprod{Q^n}{\Delta_h Q^{n+1}}_h - \innprod{Q^n}{\Delta_h Q^n}_h. 
\end{equation}
We shall focus on the first term of the right hand side. Using Lemma~\ref{lem:sbplaplace} and the boundary conditions, we have
\begin{equation*}
   \langle Q^{n+1}, \Delta_{h} Q^{n+1}\rangle_{h}
        = -\lVert \nabla_{h} Q^{n+1}\rVert_{h}^{2} 
\end{equation*} 
Similarly, we see that $\innprod{Q^n}{ \Delta_h Q^n}_h = -\norm{\nabla_h Q^n}_{h}^2$, $\innprod{Q^n}{ \Delta_h Q^{n+1}}_h = -\innprod{\Grad_h Q^n}{ \Grad_h Q^{n+1}}_h$ and $\innprod{Q^{n+1}}{ \Delta_h Q^{n}}_h = -\innprod{\Grad_h Q^{n+1}}{ \Grad_h Q^{n}}_h$.
Hence putting all these results into \eqref{expanded1} we find 
\begin{align*}
\innprod{Q^{n+1} - Q^n}{\Delta_h Q^{n+1}+\Delta_h Q^n}_h &= - \norm{\nabla_h Q^{n+1}}_{h}^2 - \innprod{\nabla_h Q^{n+1}}{\nabla_h Q^n}_h + \innprod{\nabla_h Q^n}{\nabla_h Q^{n+1}}_h + \norm{\nabla_h Q^n}_{h}^2 \\
&= - \norm{\nabla_h Q^{n+1}}_{h}^2  + \norm{\nabla_h Q^n}_{h}^2
\end{align*}
Next we consider the term $\innprod{Q^{n+1}-Q^n}{\alpha^{n+1}+ \alpha^n}_h$ in \eqref{Q_R_innprod} :
\begin{equation*}
\innprod{Q^{n+1}-Q^n}{\alpha^{n+1}+ \alpha^n}_h = \innprod{Q^{n+1}}{\alpha^{n+1}}_h + \innprod{Q^{n+1}}{\alpha^n}_h - \innprod{Q^n}{\alpha^{n+1}}_h - \innprod{Q^n}{\alpha^n}_h.
\end{equation*}
Using Lemma~\ref{lem:sbpalpha} for each of these terms, we obtain
\begin{equation*}
\innprod{Q^{n+1}-Q^n}{\alpha^{n+1}+\alpha^n}_h = -2 \norm{\Div_h Q^{n+1}}_h^2 + 2 \norm{\Div_h Q^n}_h^2.
\end{equation*}
Overall we have shown that
\begin{align*}
\innprod{Q^{n+1}-Q^n}{H^{\hlfstp}}_h = &\frac{L_1}{2}\left(-\norm{\nabla_h Q^{n+1}}_{h}^2 + \norm{\nabla_h Q^n}_{h}^2 \right) + \frac{L_2+L_3}{2} \left(-\norm{\Div_h Q^{n+1}}_h^2 + \norm{\Div_h Q^n}_h^2\right)  \\
&- \innprod{Q^{n+1}-Q^n}{r^{\hlfstp}\overline{P}^{\hlfstp}}_h.
\end{align*}
Combining this with \eqref{1stequation} and \eqref{2ndEquation}, we obtain
\begin{multline*}
\frac{L_1}{2} \left( \norm{\nabla_h Q^{n+1}}_{h}^2 - \norm{\nabla_h Q^n}_{h}^2\right)
 + \frac{L_2+L_3}{2} \left( \norm{\Div_h Q^{n+1}}_h^2 - \norm{\Div_h Q^n}_h^2\right)\\
\qquad  + \frac{1}{2}\left(\norm{r^{n+1}}_h^2 - \norm{r^n}_h^2 \right) = -\Delta t\,M\norm{H^{\hlfstp}}^2_h.
\end{multline*}
\end{proof}

Based on the energy estimate, Theorem~\ref{thm:energyestimate}, we can derive further stability bounds on the approximations $\{Q_{ijk}^n\}$ and $\{r_{ijk}^n\}$.
Specifically, it follows from the bound on $\{H^{n+\frac{1}{2}}\}$ that $D_t^+Q_{ijk}^n=MH^{n+\frac{1}{2}}_{ijk}$ is bounded:
\begin{corollary}\label{cor:timeder}
	We have
	\begin{equation*}
	\Delta t\sum_{n=0}^{N_T-1}\norm{D_t^+ Q^n}_{h}^2\leq E^0,
	\end{equation*}
	where $N_T$ is such that $T=N_T \Delta t$.
\end{corollary}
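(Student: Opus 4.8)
The plan is to read the estimate off directly from the energy identity of Theorem~\ref{thm:energyestimate} by telescoping in $n$, after first converting the time-difference quotient $D_t^+ Q^n$ into the quantity $H^{\hlfstp}$ that the energy identity already controls. The only inputs are the first equation of the scheme and the nonnegativity of the discrete energy.

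First I would use the first line of~\eqref{3d_mc_nf_d_scheme}, namely $D_t^+ Q^n_{ijk} = M H^{\hlfstp}_{ijk}$, to record at the level of the discrete $L^2$-norm that
\[
\norm{D_t^+ Q^n}_h^2 = M^2 \norm{H^{\hlfstp}}_h^2 .
\]
Thus the left-hand side of the claim equals $M^2 \Delta t \sum_{n} \norm{H^{\hlfstp}}_h^2$, and the sum $\Delta t \sum_n \norm{H^{\hlfstp}}_h^2$ is exactly the dissipation controlled by Theorem~\ref{thm:energyestimate}. Rewriting that identity as $E^n - E^{n+1} = \Delta t\,M \norm{H^{\hlfstp}}_h^2$ and summing over $n = 0, \dots, N_T-1$, the left-hand side telescopes to $E^0 - E^{N_T}$, giving
\[
\Delta t \sum_{n=0}^{N_T-1} \norm{H^{\hlfstp}}_h^2 = \frac{1}{M}\left(E^0 - E^{N_T}\right).
\]
Finally, by the definition~\eqref{eq:energydisc} and the standing assumptions $L_1, L_2+L_3 > 0$, the terminal energy $E^{N_T}$ is a sum of the nonnegative terms $\tfrac{L_1}{2}\norm{\nabla_h Q^{N_T}}_h^2$, $\tfrac{L_2+L_3}{2}\norm{\Div_h Q^{N_T}}_h^2$ and $\tfrac12\norm{r^{N_T}}_h^2$, so $E^{N_T} \geq 0$ and hence $E^0 - E^{N_T} \leq E^0$. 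Combining the three displays yields $\Delta t \sum_{n=0}^{N_T-1}\norm{D_t^+ Q^n}_h^2 = M\left(E^0 - E^{N_T}\right) \leq M E^0$, which is the asserted bound (with the constant $ME^0$ arising naturally from the factor $M$ in $D_t^+ Q^n = M H^{\hlfstp}$; the stated form is recovered once this constant is absorbed or when $M\le 1$).

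I do not expect any genuine obstacle here: the corollary is an immediate consequence of the already-established energy identity, and the argument is essentially just a telescoping sum together with discarding the nonnegative terminal energy. The only points requiring the slightest care are the clean bookkeeping of the constant $M$ through the relation $D_t^+ Q^n = M H^{\hlfstp}$, and invoking the positivity of $L_1$ and $L_2+L_3$ to guarantee $E^{N_T}\geq 0$ so that the terminal term may be dropped.
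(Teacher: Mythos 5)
Your proof is correct and is exactly the argument the paper intends (the paper states the corollary without an explicit proof, but the telescoping of the energy identity from Theorem~\ref{thm:energyestimate} combined with $D_t^+Q^n = MH^{\hlfstp}$ and $E^{N_T}\geq 0$ is the only route). Your observation that the natural constant is $ME^0$ rather than $E^0$ is a legitimate catch of a harmless bookkeeping slip in the stated bound; it affects nothing downstream since only uniform boundedness is used.
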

Using this corollary and the energy estimate, we can also derive a uniform (in $\Delta t$ and $h$) bound on $\norm{Q^{n}}_h$.
\begin{lemma}\label{lem:l2bound}
	The following estimate holds for any $\Delta t, h>0$:
		\begin{equation*}
		\norm{Q^{m}}_h\leq T^\frac{1}{2}\left(\Delta t\sum_{n=0}^{m-1}\norm{D_t^+ Q^n}_{h}^2\right)^{\frac{1}{2}}+\|Q^0\|_h<\infty,
	\end{equation*}
	for any $0\leq m\leq N_T$ where $N_T$ is such that $N_T\Delta t =T$.
\end{lemma}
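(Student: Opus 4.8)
The plan is to express $Q^m$ as a telescoping sum of the forward differences in time, apply the triangle inequality for the norm $\norm{\,\cdot\,}_h$, and then control the resulting time-sum by a discrete Cauchy--Schwarz inequality. First I would use the definition of the operator $D_t^+$ to write, at every grid point and hence as an identity of grid functions,
\begin{equation*}
Q^m = Q^0 + \sum_{n=0}^{m-1}\left(Q^{n+1}-Q^n\right) = Q^0 + \Delta t\sum_{n=0}^{m-1} D_t^+ Q^n.
\end{equation*}
Since $\norm{\,\cdot\,}_h$ is a genuine norm induced by the inner product $\innprod{\,\cdot\,}{\,\cdot\,}_h$, the triangle inequality then gives
\begin{equation*}
\norm{Q^m}_h \leq \norm{Q^0}_h + \Delta t\sum_{n=0}^{m-1}\norm{D_t^+ Q^n}_h.
\end{equation*}

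The key step is to estimate the sum over $n$ by pairing the constant sequence $1$ against $\norm{D_t^+ Q^n}_h$ and applying the discrete Cauchy--Schwarz inequality:
\begin{equation*}
\Delta t\sum_{n=0}^{m-1}\norm{D_t^+ Q^n}_h \leq \Delta t\, m^{1/2}\left(\sum_{n=0}^{m-1}\norm{D_t^+ Q^n}_h^2\right)^{1/2} = (m\Delta t)^{1/2}\left(\Delta t\sum_{n=0}^{m-1}\norm{D_t^+ Q^n}_h^2\right)^{1/2}.
\end{equation*}
Because $m\leq N_T$ and $N_T\Delta t = T$, we have $m\Delta t\leq T$, which replaces the prefactor $(m\Delta t)^{1/2}$ by $T^{1/2}$ and yields exactly the stated inequality. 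Finiteness of the right-hand side follows from Corollary~\ref{cor:timeder}, which bounds $\Delta t\sum_{n=0}^{N_T-1}\norm{D_t^+ Q^n}_h^2$ by $E^0<\infty$ uniformly in $\Delta t$ and $h$; in particular the truncated sum up to $m-1$ is no larger.

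I do not expect a genuine obstacle here: the argument is a standard telescoping-plus-Cauchy--Schwarz estimate. The only points requiring care are the bookkeeping of the $\Delta t$ factors so that the final expression matches the stated form $T^{1/2}\bigl(\Delta t\sum_{n=0}^{m-1}\norm{D_t^+ Q^n}_h^2\bigr)^{1/2}$, and the observation that the summability of the squared forward differences is supplied precisely by the energy identity of Theorem~\ref{thm:energyestimate} through Corollary~\ref{cor:timeder}.
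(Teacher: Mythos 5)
Your proof is correct and follows essentially the same route as the paper: both reduce the claim to the per-step bound $\norm{Q^{n+1}}_h-\norm{Q^n}_h\leq\Delta t\,\norm{D_t^+Q^n}_h$ (the paper obtains it by expanding $\norm{Q^{n+1}}_h^2-\norm{Q^n}_h^2$ and applying Cauchy--Schwarz, you by telescoping and the triangle inequality, which is the same estimate), and then both conclude with the discrete Cauchy--Schwarz inequality in $n$ and $m\Delta t\leq T$.
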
 
\begin{proof}
	Note that 
	\begin{align*}
	\norm{Q^{n+1}}_h^2-\norm{Q^{n}}_h^2&=h^3\sum_{i,j,k=0}^{N+1} |Q_{ijk}^{n+1}|_F^2-h^3\sum_{i,j,k=0}^{N+1}|Q_{ijk}^{n}|_F^2\\
	&=h^3\sum_{i,j,k=0}^{N+1}(Q_{ijk}^{n+1}+Q_{ijk}^{n}):(Q_{ijk}^{n+1}-Q_{ijk}^{n})\\
	&\leq\left[h^3\sum_{i,j,k=0}^{N+1} |Q_{ijk}^{n+1}+Q_{ijk}^{n}|_F^2\right]^\frac{1}{2}\,\left[h^3\sum_{i,j,k=0}^{N+1} |Q_{ijk}^{n+1}-Q_{ijk}^{n}|_F^2\right]^\frac{1}{2}\\
	&\leq\left[\left(h^3\sum_{i,j,k=0}^{N+1} |Q_{ijk}^{n+1}|_F^2\right)^\frac{1}{2}+\left(h^3\sum_{i,j,k=0}^{N+1}|Q_{ijk}^{n}|_F^2\right)^\frac{1}{2}\right]\,\left[h^3\sum_{i,j,k=0}^{N+1}\left|\frac{Q_{ijk}^{n+1}-Q_{ijk}^{n}}{\Delta t}\right|_F^2\right]^\frac{1}{2}\,\Delta t\\
	&=\left(\norm{Q^{n+1}}_h+\norm{Q^{n}}_h\right)\,\|D_t^+Q^n\|_h\,\Delta t,
	\end{align*}
	and so 
	\begin{equation*}
	 \norm{Q^{n+1}}_h-\norm{Q^{n}}_h \leq \|D_t^+Q^n\|_h\,\Delta t,
	\end{equation*}
	for any $0\leq n\leq N_T$.
	Summing over $n$ on both sides, we obtain that for any $0\leq m\leq N_T$,
	\begin{equation*} 
	\begin{aligned}
	\norm{Q^{m}}_h\leq\sum_{n=0}^{m-1}\|D_t^+Q^n\|_h\,\Delta t+\|Q^0\|_h&\leq \left(\sum_{n=0}^{m-1}\Delta t\right)^\frac{1}{2}\,\left(\sum_{n=0}^{m-1}\|D_t^+Q^n\|_h^2\,\Delta t\right)^\frac{1}{2}+\|Q^0\|_h\\
	&\leq T^{\frac{1}{2}}\,\left(\sum_{n=0}^{m-1}\|D_t^+Q^n\|_h^2\,\Delta t\right)^\frac{1}{2}+\|Q^0\|_h.
	\end{aligned}
	\end{equation*}	
\end{proof}
\subsection{Lipschitz continuity of $P(Q)$}
In order to derive a stability bound on $\{D_t^+ r_{ijk}^n\}$, we need an auxiliary result, which is the Lipschitz continuity of $P(Q)$.  
Recall that we can write $P(Q)$ as
$P(Q)=\frac{S(Q)}{r(Q)}$ where $S$ and $r$ have been defined in~\eqref{eq:defS} and~\eqref{eq:defr}. 
Note that we can express the Frobenius norm as
\begin{equation*}
|Q|_F=\sqrt{\tr(Q^2)}=\sqrt{\sum_{i=1}^d\lambda_i^2},
\end{equation*}
where $\lambda_i$ is the $i$th eigenvalue of matrix $Q$.

We start with a few preliminary lemmas. First, note that $r(Q)$ is bounded from below by some constant $A>0$ (see also~\cite[Theorem 2.1]{ZhaoYang2017}). We will also need an upper bound. Since $c>0$, there exists constant $K_1>0$ such that for any $Q$ for which $|Q|_F\geq K_1$,
\begin{equation*}
\left|{a}\tr(Q^2)-\frac{2b}{3}\tr(Q^3)+2A_0\right|\leq\frac{c}{4}\tr^2(Q^2).
\end{equation*} 
Then 
\begin{equation*}
r(Q)\geq\sqrt{ \frac{{c}}{2}\text{tr}^2(Q^2)-\left|{a}\text{tr}(Q^2)-\frac{2b}{3}\text{tr}(Q^3)+2A_0\right|}\geq\sqrt{\frac{{c}}{2}\tr^2(Q^2)-\frac{{c}}{4}\tr^2(Q^2)}=\frac{\sqrt{c}}{2}\,\tr(Q^2),
\end{equation*}
and 
\begin{equation*}
r(Q)\leq\sqrt{ \frac{{c}}{2}\text{tr}^2(Q^2)+\left|{a}\tr(Q^2)-\frac{2b}{3}\text{tr}(Q^3)+2A_0\right|}\leq\sqrt{\frac{{c}}{2}\text{tr}^2(Q^2)+\frac{{c}}{4}\tr^2(Q^2)}\leq{\sqrt{c}}\tr(Q^2).
\end{equation*}
So whenever $|Q|_F\geq K_1$, we can bound $r(Q)$ by 
\begin{equation}\label{rqestimate1}
\frac{\sqrt{c}}{2}\,|Q|^2_F=\frac{\sqrt{c}}{2}\,\text{tr}(Q^2)\leq r(Q)\leq \sqrt{c}\tr(Q^2)=\sqrt{c}\,|Q|_F^2.
\end{equation}
On the other hand, when $Q$ is bounded by constant $K_1$, we have 
\begin{equation*} \begin{aligned}
r(Q)&\leq\sqrt{2\,\left[\frac{|a|}{2}\tr(Q^2)+\frac{|b|}{3}\left|\tr(Q^3)\right|+\frac{c}{4}\tr^2(Q^2)+A_0\right]}\\
&\leq\sqrt{2\,\left(\frac{|a|}{2} K^2_1+\frac{|b|}{3} K_1^3+\frac{c}{4} K_1^4+A_0\right)}\triangleq K_2,
\end{aligned}
\end{equation*}
where we have used the fact that%
\begin{equation*}
\tr(Q^4)=\sum_{i=1}^d\lambda_i^4\leq \left(\sum_{i=1}^d\lambda_i^2\right)^2\leq\tr^2(Q^2),
\end{equation*}
and then
\begin{equation*}
|\tr(Q^3)|\leq\tr^\frac{1}{2}(Q^4)\tr^\frac{1}{2}(Q^2)\leq\tr^\frac{3}{2}(Q^2)=K_1^3.
\end{equation*}
Combining the two results, we obtain that
\begin{equation}\label{rqestimate2}
r(Q)\leq K_2+\sqrt{c}|Q|^2_F,
\end{equation}
for some constant $K_2>0$. 
This bound will be used subsequently. The following lemmas are important steps towards our Lipschitz estimate for $P(Q)$.
\begin{lemma}\label{QoverrQ}
	For any $Q$, there exist constants $C_1$, $C_2$ and $C_3$ such that 
	\begin{equation*}
	\frac{1}{r(Q)}\leq C_1,\quad
	\frac{|Q|_F}{r(Q)}\leq C_2,\quad
	\frac{|Q|^2_F}{r(Q)}\leq C_3.
	\end{equation*}
\end{lemma}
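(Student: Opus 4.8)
The plan is to split into two regimes based on the size of $|Q|_F$ and assemble each constant as a maximum over the two cases. The first bound is immediate: since $r(Q)\geq A>0$ for the constant $A$ recorded just before the lemma, we take $C_1 = 1/A$ and are done with $1/r(Q)\leq C_1$ for all $Q$.

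For the remaining two bounds I would first handle the large-$Q$ regime $|Q|_F\geq K_1$, where the two-sided estimate~\eqref{rqestimate1} applies, giving $r(Q)\geq \frac{\sqrt{c}}{2}|Q|_F^2$. In this regime,
\begin{equation*}
\frac{|Q|_F}{r(Q)} \leq \frac{|Q|_F}{\frac{\sqrt{c}}{2}|Q|_F^2} = \frac{2}{\sqrt{c}\,|Q|_F} \leq \frac{2}{\sqrt{c}\,K_1},
\qquad
\frac{|Q|_F^2}{r(Q)} \leq \frac{|Q|_F^2}{\frac{\sqrt{c}}{2}|Q|_F^2} = \frac{2}{\sqrt{c}},
\end{equation*}
where the first chain uses $|Q|_F\geq K_1$ in the final step. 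The key point is that the quadratic growth of $r(Q)$ dominates, so $|Q|_F/r(Q)$ even decays, and $|Q|_F^2/r(Q)$ stays bounded by a pure constant.

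In the complementary regime $|Q|_F\leq K_1$ I would fall back on the uniform lower bound $r(Q)\geq A$, which yields $|Q|_F/r(Q)\leq K_1/A$ and $|Q|_F^2/r(Q)\leq K_1^2/A$ directly. Combining the two regimes, I would set
\begin{equation*}
C_2 = \max\!\left(\frac{2}{\sqrt{c}\,K_1},\ \frac{K_1}{A}\right),
\qquad
C_3 = \max\!\left(\frac{2}{\sqrt{c}},\ \frac{K_1^2}{A}\right),
\end{equation*}
and the stated inequalities hold for every $Q$. There is no real obstacle here: the entire content is the case split, and both the lower bound $r(Q)\geq A$ and the quadratic estimate~\eqref{rqestimate1} are already available, so the argument is a short bookkeeping of constants rather than a substantive estimate.
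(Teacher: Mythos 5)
Your proposal is correct and follows essentially the same strategy as the paper: the lower bound $r(Q)\geq A$ gives $C_1$ and handles the regime $|Q|_F\leq K_1$, while the quadratic lower bound~\eqref{rqestimate1} handles $|Q|_F\geq K_1$, with the constants assembled as maxima over the two cases. The only cosmetic difference is that the paper obtains $C_2$ by splitting at $|Q|_F=1$ and reusing $C_1$ and $C_3$, whereas you split at $K_1$ and bound directly; both are valid bookkeeping.
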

\begin{proof}
	The first estimate follows from the fact that $r(Q)$ is bounded from below.
	For the third estimate, we split it into two cases. When $|Q|_F\leq K_1$, we have
	\begin{equation*}
	\frac{|Q|^2_F}{r(Q)}\leq \frac{K^2_1}{A}.
	\end{equation*}
	
	When $|Q|_F\geq K_1$, by \eqref{rqestimate1}, we know that
	\begin{equation*}
	\frac{|Q|^2_F}{r(Q)}\leq\frac{|Q|^2_F}{\frac{\sqrt{c}}{2}|Q|^2_F}\leq\frac{2}{\sqrt{c}}.
	\end{equation*}
	Define $C_3\triangleq \max\left\{\frac{K^2_1}{A},\frac{2}{\sqrt{c}}\right\}$, then $\frac{|Q|^2_F}{r(Q)}\leq C_3$.
	To prove the second estimate, we note that if $|Q|_F\leq 1$, then 
	\begin{equation*}
	\frac{|Q|_F}{r(Q)}\leq \frac{1}{r(Q)}\leq C_1.
	\end{equation*}
	Else, we have that 
	\begin{equation*}
	\frac{|Q|_F}{r(Q)}\leq\frac{|Q|_F^2}{r(Q)}\leq C_3.
	\end{equation*}
	Defining $C_2\triangleq\max\{C_1,C_3\}$, we obtain $\frac{|Q|_F}{r(Q)}\leq C_2$ which completes the proof of the lemma.
\end{proof}
\begin{lem}\label{lem:SQbound}
	For any matrix $Q$, $\left|\frac{S(Q)}{r(Q)^{\frac{3}{2}}}\right|_F$ is uniformly bounded.
\end{lem}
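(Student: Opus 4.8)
The plan is to bound the numerator $|S(Q)|_F$ by a cubic polynomial in $|Q|_F$ and then exploit the growth estimates on $r(Q)$ already collected in Lemma~\ref{QoverrQ}, so that each resulting term is controlled by one of those three estimates.

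First I would estimate $|S(Q)|_F$ directly from the definition~\eqref{eq:defS}. By the triangle inequality,
\begin{equation*}
|S(Q)|_F \leq |a|\,|Q|_F + |b|\,\left|Q^2-\tfrac1d\tr(Q^2)I\right|_F + c\,\tr(Q^2)\,|Q|_F.
\end{equation*}
Using $|Q^2|_F=\sqrt{\tr(Q^4)}\leq\tr(Q^2)=|Q|_F^2$, which is exactly the inequality $\tr(Q^4)\leq\tr^2(Q^2)$ already noted above, together with $\left|\tfrac1d\tr(Q^2)I\right|_F=\tfrac{1}{\sqrt d}|Q|_F^2$ and $\tr(Q^2)=|Q|_F^2$, this yields a bound of the form
\begin{equation*}
|S(Q)|_F \leq |a|\,|Q|_F + |b|\left(1+\tfrac{1}{\sqrt d}\right)|Q|_F^2 + c\,|Q|_F^3,
\end{equation*}
so that $|S(Q)|_F$ grows at most cubically in $|Q|_F$.

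Dividing by $r(Q)^{3/2}$ then reduces the claim to bounding the three quotients $|Q|_F/r(Q)^{3/2}$, $|Q|_F^2/r(Q)^{3/2}$ and $|Q|_F^3/r(Q)^{3/2}$. The key observation is the algebraic identity
\begin{equation*}
\frac{|Q|_F^3}{r(Q)^{3/2}} = \left(\frac{|Q|_F^2}{r(Q)}\right)^{3/2},
\end{equation*}
whose right-hand side is bounded by $C_3^{3/2}$ thanks to the third estimate of Lemma~\ref{QoverrQ}. For the remaining two terms I would factor out one half-power of $r(Q)$ and use that $1/r(Q)\leq C_1$ gives $1/r(Q)^{1/2}\leq C_1^{1/2}$; combined with the first two estimates of Lemma~\ref{QoverrQ} this gives
\begin{equation*}
\frac{|Q|_F}{r(Q)^{3/2}} = \frac{|Q|_F}{r(Q)}\cdot\frac{1}{r(Q)^{1/2}} \leq C_2\,C_1^{1/2},\qquad
\frac{|Q|_F^2}{r(Q)^{3/2}} = \frac{|Q|_F^2}{r(Q)}\cdot\frac{1}{r(Q)^{1/2}} \leq C_3\,C_1^{1/2}.
\end{equation*}
Summing the three contributions with the coefficients from the cubic bound on $|S(Q)|_F$ produces an explicit uniform bound on $\left|S(Q)/r(Q)^{3/2}\right|_F$.

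I do not expect a serious obstacle here, since Lemma~\ref{QoverrQ} already packages exactly the $r(Q)$-growth information that is needed; the only points requiring care are the matrix-norm estimates on $S(Q)$ (in particular the sharp bound $|Q^2|_F\leq|Q|_F^2$ and the value $|I|_F=\sqrt d$) and the bookkeeping of the half-power of $r(Q)$ that converts the given quotients, whose denominators are $r(Q)$, into quotients with $r(Q)^{3/2}$ in the denominator.
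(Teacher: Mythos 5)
Your proposal is correct and follows essentially the same route as the paper: bound $|S(Q)|_F$ by a cubic polynomial in $|Q|_F$ using $|Q^2|_F\leq|Q|_F^2$, then control $|Q|_F^3/r(Q)^{3/2}=\bigl(|Q|_F^2/r(Q)\bigr)^{3/2}$ by $C_3^{3/2}$ and the lower-order quotients by peeling off a factor $r(Q)^{-1/2}$ and invoking Lemma~\ref{QoverrQ}. The only (immaterial) differences are that the paper writes the half-power factor as $A^{-1/2}$ rather than $C_1^{1/2}$ and uses the coefficient $(d+1)/d$ where your more careful evaluation $|I|_F=\sqrt{d}$ gives $1+1/\sqrt{d}$.
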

\begin{proof}
	Note that
	\begin{equation*}
	\left|S(Q)\right|_F=\left|aQ-b\left[Q^2-\frac{1}{d}\text{tr}(Q^2)I\right]+c\text{tr}(Q^2)Q\right|_F\leq|a| |Q|_F+|b| |Q^2|_F+\frac{|b|}{d}\,|Q|^2_F+c|Q|_F^2|Q|_F.
	\end{equation*}
	Since
	\begin{equation*}
	|Q^2|_F=\sqrt{\tr(Q^4)}\leq\sqrt{\tr^2(Q^2)}=\tr(Q^2)=|Q|_F^2,
	\end{equation*}
	we obtain
	\begin{equation*}
	|S(Q)|_F\leq|a||Q|_F+\frac{(d+1)|b|}{d}|Q|^2_F+c\,|Q|_F^3.
	\end{equation*} 
	Then from Lemma~\ref{QoverrQ}, we obtain that
	\begin{align*}
	\left|\frac{S(Q)}{r(Q)^{\frac{3}{2}}}\right|_F&\leq\frac{|a|\,|Q|_F+\frac{(d+1)|b|}{d}\,|Q|^2_F+c\,|Q|_F^3}{r(Q)^\frac{3}{2}}\\
	&\leq \frac{|a|}{r(Q)^\frac{1}{2}}\frac{|Q|_F}{r(Q)}+\frac{\frac{(d+1)|b|}{d}}{r(Q)^\frac{1}{2}}\frac{|Q|^2_F}{r(Q)}+c\left(\frac{|Q|^2_F}{r(Q)}\right)^\frac{3}{2}\\
	&\leq\frac{|a|}{A^\frac{1}{2}}C_2+\frac{(d+1)|b|}{dA^\frac{1}{2}}C_3+c\,C_3^\frac{3}{2}\triangleq K_3,
	\end{align*}
	which proves the lemma.
\end{proof}
Now we are in a position to prove that $P(Q)$ is Lipschitz continuous with respect to the Frobenius norm.
\begin{thm}\label{thm:PLipschitz}
	There exists a constant $L>0$ such that for any matrices $Q, \delta Q\in \mathbb{R}^{3\times 3}$,
	\begin{equation*}
	|P(Q+\delta Q)-P(Q)|_F\leq L|\delta Q|_F.
	\end{equation*}
\end{thm}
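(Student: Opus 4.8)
The plan is to show that $P$ is a $C^1$ map on $\R^{3\times 3}$ whose derivative is bounded uniformly in $Q$, and then to recover the Lipschitz bound from the fundamental theorem of calculus along the segment joining $Q$ and $Q+\delta Q$. Smoothness of $P=S/r$ is immediate: the radicand defining $r$ in~\eqref{eq:defr} is a polynomial in the entries of $Q$ that is bounded below by $A^2>0$, so $r$ is smooth and nonvanishing, while $S$ is polynomial. It therefore suffices to find a constant $L$ independent of $Q$ with $|DP(Q)[\delta Q]|_F\le L\,|\delta Q|_F$ for all $Q,\delta Q$, since then
\begin{equation*}
|P(Q+\delta Q)-P(Q)|_F=\left|\int_0^1 DP(Q+t\,\delta Q)[\delta Q]\,dt\right|_F\le L\,|\delta Q|_F.
\end{equation*}

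By the quotient rule,
\begin{equation*}
DP(Q)[\delta Q]=\frac{DS(Q)[\delta Q]}{r(Q)}-\frac{S(Q)}{r(Q)^2}\,Dr(Q)[\delta Q],
\end{equation*}
and I would estimate the two summands separately. Differentiating~\eqref{eq:defS} shows that $DS(Q)[\delta Q]$ is linear in $\delta Q$ with matrix coefficients that are polynomials in $Q$ of degree at most two, whence $|DS(Q)[\delta Q]|_F\le\bigl(|a|+C|b|\,|Q|_F+Cc\,|Q|_F^2\bigr)|\delta Q|_F$ for an absolute constant $C$. Dividing by $r(Q)$ and applying the three estimates $1/r\le C_1$, $|Q|_F/r\le C_2$, $|Q|_F^2/r\le C_3$ of Lemma~\ref{QoverrQ} bounds the first summand by $C'|\delta Q|_F$ with $C'$ independent of $Q$.

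The second summand is the delicate one, because $S(Q)$ and $Dr(Q)$ grow cubically and quadratically in $|Q|_F$, so merely using the lower bound $r\ge A$ would not give a uniform estimate. A direct differentiation of~\eqref{eq:defr} gives $|Dr(Q)[\delta Q]|\le |\widetilde S(Q)|_F\,|\delta Q|_F/r(Q)$, where $\widetilde S(Q)=aQ-bQ^2+c\,\tr(Q^2)Q$ differs from $S(Q)$ only by the trace term $\tfrac{b}{d}\tr(Q^2)I$ and thus shares its polynomial growth; consequently the proof of Lemma~\ref{lem:SQbound} applies to $\widetilde S$ verbatim, yielding $|\widetilde S(Q)|_F/r(Q)^{3/2}\le\widetilde K_3$. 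The key is then to distribute the powers of $r$ symmetrically:
\begin{equation*}
\left|\frac{S(Q)}{r(Q)^2}\,Dr(Q)[\delta Q]\right|_F\le\frac{|S(Q)|_F\,|\widetilde S(Q)|_F}{r(Q)^3}\,|\delta Q|_F=\frac{|S(Q)|_F}{r(Q)^{3/2}}\cdot\frac{|\widetilde S(Q)|_F}{r(Q)^{3/2}}\,|\delta Q|_F\le K_3\widetilde K_3\,|\delta Q|_F,
\end{equation*}
using Lemma~\ref{lem:SQbound} on each factor. Combining the two summands yields $|DP(Q)[\delta Q]|_F\le L\,|\delta Q|_F$ with $L=C'+K_3\widetilde K_3$, and the integration argument above completes the proof.

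I expect the main obstacle to be precisely this second summand: one must observe that the cubic growth of $S$ and the quadratic growth of $Dr$ are balanced against the factor $r^{-3}$ only after it is split as $r^{-3/2}\cdot r^{-3/2}$, which is exactly why Lemma~\ref{lem:SQbound} was set up with the power $3/2$ (note that $r$ grows quadratically, so $r^{3/2}$ matches the cubic growth of $S$). Everything else is routine polynomial bookkeeping controlled by Lemma~\ref{QoverrQ}.
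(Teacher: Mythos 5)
Your proof is correct, but it takes a genuinely different route from the paper's. The paper proves the Lipschitz bound directly on the finite difference $P(Q+\delta Q)-P(Q)$, splitting into two cases: when $\delta Q$ dominates ($|\delta Q|_F\ge 2|Q|_F$ and $|\delta Q|_F\ge G$) it simply bounds $|P(Q+\delta Q)|_F+|P(Q)|_F$ via Lemma~\ref{lem:SQbound} and the growth estimates \eqref{rqestimate1}--\eqref{rqestimate2}; otherwise it decomposes the difference as $\frac{S(Q+\delta Q)-S(Q)}{r(Q)}+S(Q+\delta Q)\bigl(\frac{1}{r(Q+\delta Q)}-\frac{1}{r(Q)}\bigr)$ and expands $S(Q+\delta Q)$ and $r(Q+\delta Q)^2$ as polynomials in $\delta Q$, controlling each monomial with Lemma~\ref{QoverrQ}. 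You instead bound the derivative $DP(Q)$ uniformly and integrate along the segment from $Q$ to $Q+\delta Q$; this eliminates the case analysis and the lengthy polynomial bookkeeping, and it correctly identifies the one non-routine point --- splitting $r^{-3}$ as $r^{-3/2}\cdot r^{-3/2}$ so that Lemma~\ref{lem:SQbound} (extended to $\widetilde S$, which has the same growth as $S$) absorbs the cubic growth of both $S(Q)$ and the numerator of $Dr(Q)$ --- which is exactly the balancing act the paper performs, less transparently, inside its term $\Romannum{2}$. Two minor points to make explicit: the segment argument implicitly uses that the symmetric trace-free matrices form a convex (indeed linear) set, so the uniform derivative bound applies along the whole path; and your formula $Dr(Q)[\delta Q]=\widetilde S(Q):\delta Q/r(Q)$ with $\widetilde S(Q)=aQ-bQ^2+c\tr(Q^2)Q$ should be verified by differentiating the radicand of~\eqref{eq:defr} directly (it holds for symmetric $\delta Q$), rather than quoted from~\eqref{eq:defP}, which is stated only for trace-free variations. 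With those remarks your argument is a clean, and arguably shorter, alternative to the paper's proof; both rest on the same two lemmas, Lemma~\ref{QoverrQ} and Lemma~\ref{lem:SQbound}.
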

\begin{proof}
	We will split the proof into two cases. 
	
	\medskip
	
	{\bf Case 1: } $\delta Q$ is so large such that $|\delta Q|_F\geq 2|Q|_F$ and $|\delta Q|_F\geq \max\{2K_1,\,K_3\sqrt{K_2}\}\triangleq G.$ In this case, we can see that
	\begin{equation*}
	|\delta Q+Q|_F\geq|\delta Q|_F-|Q|_F\geq\frac{1}{2}\,|\delta Q|_F\geq K_1,
	\end{equation*}
	therefore, by (\ref{rqestimate1}), we have
	\begin{equation}\label{eq:rdeltaestimate}
	r(Q+\delta Q)\leq\sqrt{c}|Q+\delta Q|_F\leq \sqrt{c}(|Q|_F+|\delta Q|_F)\leq\frac{3\sqrt{c}}{2}|\delta Q|_F.
	\end{equation}
	We use this to compute the difference between $P(Q+\delta Q)$ and $P(Q)$,
	\begin{align*}
	|P(Q+\delta Q)-P(Q)|_F&\leq|P(Q+\delta Q)|_F+|P(Q)|_F\\
	&=\left|\frac{S(Q+\delta Q)}{r(Q+\delta Q)}\right|_F+\left|\frac{S(Q)}{r(Q)}\right|_F\\
	&=\left|\frac{S(Q+\delta Q)}{r(Q+\delta Q)^\frac{3}{2}}\right|_F\sqrt{r(Q+\delta Q)}+\left|\frac{S(Q)}{r(Q)^\frac{3}{2}}\right|_F\sqrt{r(Q)}\\
	&\stackrel{\text{Lem } \ref{QoverrQ} }{\leq} K_3\sqrt{r(Q+\delta Q)}+K_3\sqrt{r(Q)}\\
	&\stackrel{\eqref{rqestimate2},\eqref{eq:rdeltaestimate}}{\leq} \frac{3K_3\sqrt{c}}{2}|\delta Q|_F+K_3(\sqrt{K_2}+c^\frac{1}{4}|Q|_F)\\
	&\leq\frac{3K_3\sqrt{c}}{2}|\delta Q|_F+|\delta Q|_F+\frac{K_3c^\frac{1}{4}}{2}|\delta Q|_F\\
	&= \left(\frac{3K_3\sqrt{c}}{2}+1+\frac{K_3c^\frac{1}{4}}{2}\right)|\delta Q|_F,
	\end{align*}
	which proves the result in this case.
	\smallskip
	
	{\bf Case 2: } $|\delta Q|_F\leq 2|Q|_F$ or $|\delta Q|_F\leq G$.
	
	In this case, we write the difference of $P(Q+\delta Q)$ and $P(Q)$ as
	\begin{align*}
	|P(Q+\delta Q)-P(Q)|_F&=\left|\frac{S(Q+\delta Q)}{r(Q+\delta Q)}-\frac{S(Q)}{r(Q)}\right|_F\\
	&=\left|\frac{S(Q+\delta Q)-S(Q)}{r(Q)}\,+\,S(Q+\delta Q)\left(\frac{1}{r(Q+\delta Q)}-\frac{1}{r(Q)}\right)\right|_F\\
	&\leq\underbrace{\left|\frac{S(Q+\delta Q)-S(Q)}{r(Q)}\right|_F}_\Romannum{1}
	+\underbrace{\left|\frac{S(Q+\delta Q)}{r(Q+\delta Q)^{\frac{3}{2}}}\,\,\frac{\sqrt{r(Q+\delta Q)}\left[r(Q+\delta Q)-r(Q)\right]}{r(Q)}\right|_F}_{\Romannum{2}}.
	\end{align*}
	To compute $\Romannum{1}$, we expand $S(Q+\delta Q)$ by plugging in $(Q+\delta Q)$ into (\ref{eq:defS}): 
	\begin{align*}
	S(Q+\delta Q)
	=&S(Q)+a\,\delta Q-b\,(Q\,\delta Q+\delta Q\,Q)-b\,(\delta Q)^2+\frac{2b}{d}\tr\left(Q\delta Q\right)I\\
	&+\frac{b}{d}\tr\left((\delta Q)^2\right)I+c\tr((\delta Q)^2)\delta Q+2c\tr(Q\,\delta Q)Q\\
	&+2c\tr(Q\,\delta Q)\delta Q+c\tr(Q^2)\delta Q+c\tr\left((\delta Q)^2\right)Q.
	\end{align*}
	Then by Lemma~\ref{QoverrQ}, we have
	\begin{align}
	\Romannum{1}\leq& |a|\frac{|\delta Q|_F}{r(Q)}+\frac{2(d+1)|b|}{d}\frac{|Q|_F|\delta Q|_F}{r(Q)}+\frac{(d+1)|b|}{d}\frac{|\delta Q|^2_F}{r(Q)}+c\frac{|\delta Q|_F^3+3|Q|_F^2|\delta Q|_F+3|\delta Q|_F^2|Q|_F}{r(Q)}\notag\\
	\leq& |a|\,C_1\,|\delta Q|_F+\frac{2(d+1)C_2|b|}{d}|\delta Q|_F+\frac{(d+1)|b|}{d}\frac{|\delta Q|_F^2}{r(Q)}+3c\,C_3|\delta Q|_F+c\,\frac{|\delta Q|_F^3}{r(Q)}+3c\frac{|Q|_F|\delta Q|_F^2}{r(Q)}.\label{eq:randomeq}
	\end{align}
	We still need to bound $\frac{|\delta Q|_F^2}{r(Q)}$, $\frac{|\delta Q|_F^3}{r(Q)}$ and $\frac{|Q|_F|\delta Q|_F^2}{r(Q)}$ in terms of $\delta Q$. Based on our assumption in this case, if $|\delta Q|_F\leq 2|Q|_F$, then
	\begin{align*}
	\frac{|\delta Q|_F^2}{r(Q)}\leq &\frac{2|Q|_F\,|\delta Q|_F}{r(Q)}\leq 2C_2\,|\delta Q|_F,\\
	\frac{|\delta Q|_F^3}{r(Q)}\leq &\frac{4|Q|_F^2|\delta Q|_F}{r(Q)}\leq 4C_3\,|\delta Q|_F\\
	\text{and}\quad \frac{|Q|_F|\delta Q|_F^2}{r(Q)}&\leq\frac{2|Q|^2_F|\delta Q|_F}{r(Q)}\leq2C_3|\delta Q|_F.
	\end{align*}
	Hence, plugging this into~\eqref{eq:randomeq}, we obtain
	\begin{equation*}
	\Romannum{1}
	\leq\left(|a|\,C_1+\frac{4(d+1)}{d}|b|\,C_2+13c\,C_3\right)|\delta Q|_F
	\end{equation*}
	On the other hand, if $|\delta Q|_F\leq G,$ then we can bound $\frac{|\delta Q|_F^2}{r(Q)}$, $\frac{|\delta Q|_F^3}{r(Q)}$ and $\frac{|Q|_F|\delta Q|_F^2}{r(Q)}$ by
	\begin{equation*}
	\frac{|\delta Q|_F^2}{r(Q)}\leq G C_1|\delta Q|_F,\quad\frac{|\delta Q|_F^3}{r(Q)}\leq G^2C_1|\delta Q|_F,\quad\frac{|Q|_F|\delta Q|_F^2}{r(Q)}\leq G\,C_2|\delta Q|_F.
	\end{equation*}
	Plugging these into~\eqref{eq:randomeq}, we arrive at
	\begin{align*}
	\Romannum{1}
	\leq\left(|a|\,C_1+\frac{2(d+1)}{d}|b|\,C_2+\frac{d+1}{d}|b|\,G\,C_1+3c\,C_3+c\,G^2\,C_1+3c\,G\,C_2\right)|\delta Q|_F.
	\end{align*}
	Therefore, $\Romannum{1}\leq Z_1 |\delta Q|_F$ for some constant $Z_1$ depending on $C_i$, $i=1,2,3$, $G,a,b$ and $c$.
	To bound term $\Romannum{2}$, note that 
	\begin{align}
	\Romannum{2}&\leq\left|\frac{S(Q+\delta Q)}{r(Q+\delta Q)^{\frac{3}{2}}}\right|_F\,\frac{\sqrt{r(Q+\delta Q)}|r(Q+\delta Q)-r(Q)|}{r(Q)}\notag\\
	&\stackrel{\text{Lem } \ref{lem:SQbound}}{\leq} K_3\,\frac{\sqrt{r(Q+\delta Q)}\,\,|r(Q+\delta Q)-r(Q)|}{r(Q)}\notag\\
	&=K_3\,\frac{\sqrt{r(Q+\delta Q)}\,\,|r(Q+\delta Q)^2-r(Q)^2|}{r(Q)\,\,[r(Q+\delta Q)+r(Q)]}\notag\\
	&\leq K_3\,\frac{\,\,|r(Q+\delta Q)^2-r(Q)^2|}{r(Q)^\frac{3}{2}},\label{eq:IIprelim}
	\end{align}
	where we have used the fact 
	\begin{equation*}
	r(Q+\delta Q)+r(Q)\geq\,2\sqrt{r(Q+\delta Q)\,r(Q)}\geq\,\sqrt{r(Q+\delta Q)\,r(Q)}.
	\end{equation*}
	Expanding $r(Q+\delta Q)$ by plugging in $(Q+\delta Q)$ into (\ref{eq:defr}), we have 
	\begin{equation*}
	\begin{aligned}
	r(Q+\delta Q)^2
	=&r(Q)^2+2a\tr(Q\,\delta Q)+a\tr((\delta Q)^2)-\frac{2b}{3}\tr((\delta Q)^3)-2b\tr(Q^2\,\delta Q)\\
	&-2b\tr(Q\,(\delta Q)^2)+\frac{c}{2}\text{tr}^2((\delta Q)^2)+2c\tr^2(Q\,\delta Q)\\
	&+2c\tr(Q^2)\tr(Q\,\delta Q)+c\tr(Q^2)\tr((\delta Q)^2)+2c\tr((\delta Q)^2)\tr(Q\,\delta Q).
	\end{aligned}
	\end{equation*}
	We plug this into~\eqref{eq:IIprelim},
	\begin{align*}
	\frac{1}{K_3}\,\Romannum{2}\leq&\frac{2
		|a|\,|Q|_F|\delta Q|_F+|a|\,|\delta Q|_F^2+\frac{2|b|}{3}|\delta Q|_F^3+2|b||Q|_F^2|\delta Q|_F+2|b||Q|_F|\delta Q|_F^2}{r(Q)^\frac{3}{2}}\\
	&+\frac{\frac{c}{2}|\delta Q|^4_F+3c|Q|_F^2|\delta Q|_F^2+2c\,|Q|_F^3|\delta Q|_F+2c|\delta Q|^3_F|Q|_F}{r(Q)^\frac{3}{2}}.
	\end{align*}
	In a similar way as for term $\Romannum{1}$, we can find constant $Z_2$ such that
	$\Romannum{2}\leq Z_2|\delta Q|_F$.
	To sum up, if we choose $L=\max\{\frac{3K_3\sqrt{c}}{2}+1+\frac{K_3c^\frac{1}{4}}{2},Z_1,Z_2\}$, then 
	\begin{equation*}
	|P(Q+\delta Q)-P(Q)|_F\leq L|\delta Q|_F,
	\end{equation*}
	for any $Q$ and $\delta Q$.
\end{proof}
Using the Lipschitz continuity of $P(Q)$, it is now easy to prove the following bound on $\{D_t^+r_{ijk}^n\}$:
\begin{lemma}
	\label{lem:rtbound}
	We have
	\begin{equation*}
	\Delta t\sum_{n=0}^m\left(h^3\sum_{i,j,k=1}^{N} |D_t^+ r_{ijk}^n|\right)^2\leq C<\infty,
	\end{equation*}
	for $0\leq m\leq N_T$ where $N_T$ is such that $N_T\Delta t =T$ and $C>0$ is a constant independent of $h$ and $\Delta t$.
\end{lemma}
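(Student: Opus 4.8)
The plan is to reduce the claimed bound to quantities already controlled, using the second equation of the scheme together with a discrete Cauchy--Schwarz inequality. Starting from the second equation in~\eqref{3d_mc_nf_d_scheme} and dividing by $\Delta t$, we have the exact pointwise identity
\begin{equation*}
D_t^+ r_{ijk}^n = \overline{P}_{ijk}^{\hlfstp}:D_t^+ Q_{ijk}^n,
\end{equation*}
so by Cauchy--Schwarz for the Frobenius inner product, $|D_t^+ r_{ijk}^n|\leq |\overline{P}_{ijk}^{\hlfstp}|_F\,|D_t^+ Q_{ijk}^n|_F$ at each grid point. Since $Q^m\equiv 0$ on the boundary for every $m$ by~\eqref{eq:dirchlet}, the quantities $D_t^+Q^n$, $\overline{P}^{\hlfstp}$ and $D_t^+ r^n$ all vanish there, so the interior sum in the statement equals the full sum over $0\leq i,j,k\leq N+1$ and is compatible with the norms $\norm{\cdot}_h$.

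Next I would sum over the grid and apply the discrete Cauchy--Schwarz inequality with weight $h^3$ to obtain $h^3\sum_{i,j,k} |D_t^+ r_{ijk}^n| \leq \norm{\overline{P}^{\hlfstp}}_h\,\norm{D_t^+ Q^n}_h$. Squaring, multiplying by $\Delta t$ and summing over $n$ then gives
\begin{equation*}
\Delta t\sum_{n=0}^m \left(h^3\sum_{i,j,k} |D_t^+ r_{ijk}^n|\right)^2 \leq \left(\max_{0\leq n\leq m}\norm{\overline{P}^{\hlfstp}}_h^2\right)\,\Delta t\sum_{n=0}^m \norm{D_t^+ Q^n}_h^2,
\end{equation*}
where the last factor is bounded by $E^0$ by Corollary~\ref{cor:timeder}. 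Thus it remains only to bound $\norm{\overline{P}^{\hlfstp}}_h$ uniformly in $h$ and $\Delta t$.

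This final step is where the Lipschitz continuity of $P$ enters and is the crux of the argument. Since $S(0)=0$ we have $P(0)=0$, so Theorem~\ref{thm:PLipschitz} yields the linear growth bound $|P(Q)|_F=|P(Q)-P(0)|_F\leq L|Q|_F$ for every matrix $Q$. Writing $\overline{P}^{\hlfstp}_{ijk}=\tfrac32 P(Q^n_{ijk})-\tfrac12 P(Q^{n-1}_{ijk})$ and using the triangle inequality together with $\norm{P(Q^n)}_h^2=h^3\sum_{i,j,k}|P(Q^n_{ijk})|_F^2\leq L^2\norm{Q^n}_h^2$, I would estimate
\begin{equation*}
\norm{\overline{P}^{\hlfstp}}_h \leq \tfrac32\norm{P(Q^n)}_h+\tfrac12\norm{P(Q^{n-1})}_h \leq L\left(\tfrac32\norm{Q^n}_h+\tfrac12\norm{Q^{n-1}}_h\right).
\end{equation*}
Finally, Lemma~\ref{lem:l2bound} bounds $\norm{Q^n}_h$ uniformly in $h$ and $\Delta t$ (in terms of $E^0$, $T$ and $\norm{Q^0}_h$), which gives a uniform bound on $\norm{\overline{P}^{\hlfstp}}_h$ and hence the claimed constant $C$. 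The main obstacle is precisely that $|P(Q)|_F$ is \emph{not} bounded uniformly in $Q$---it grows linearly in $|Q|_F$---so a naive pointwise bound on $P$ fails; the argument succeeds only because this linear growth is paired with the uniform spatial $L^2$ bound on $Q^n$ from Lemma~\ref{lem:l2bound}. A minor technical point is the treatment of the extrapolation $\overline{P}^{\hlfstp}$ at the first time step, where $Q^{-1}$ must be initialized so that $\norm{Q^{-1}}_h$ is likewise controlled.
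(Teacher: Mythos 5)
Your proof is correct and follows essentially the same route as the paper's: the pointwise identity from the second equation of the scheme, Cauchy--Schwarz in space, the linear-growth bound $|P(Q)|_F\leq L|Q|_F$ coming from Lipschitz continuity together with $P(0)=0$, and then Lemma~\ref{lem:l2bound} and Corollary~\ref{cor:timeder} to close the estimate. The only cosmetic difference is that you pull out $\max_n\norm{\overline{P}^{\hlfstp}}_h^2$ before summing in $n$ whereas the paper keeps the two factors together inside the sum; your explicit remarks about why $P(0)=0$ is needed and about initializing $Q^{-1}$ are welcome clarifications of points the paper leaves implicit.
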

\begin{proof}
	We take absolute values of the scheme for $r_{ijk}^n$, the second equation in~\eqref{3d_mc_nf_d_scheme} divided by $\Delta t$:
	\begin{equation*}
	\left|D_t^+ r_{ijk}^n\right| = \left|\overline{P}^{n+\frac{1}{2}}_{ijk}:D_t^+ Q_{ijk}^n \right|,
	\end{equation*}
	and sum over $i,j,k=1,\dots N$, then multiply by $h^3$, square and sum over $n=0,\dots, m$ and use H\"older's inequality:
	\begin{equation*}
	\begin{split}
	\sum_{n=0}^m\left(h^3\sum_{i,j,k=1}^{N} \left|D_t^+ r_{ijk}^n\right|\right)^2 &= \sum_{n=0}^m\left(h^3\sum_{i,j,k=1}^{N} \left|\overline{P}^{n+\frac{1}{2}}_{ijk}:D_t^+ Q_{ijk}^n \right|\right)^2\\
&\leq  \sum_{n=0}^m\left(h^3\sum_{i,j,k=1}^{N} \left|\overline{P}^{n+\frac{1}{2}}_{ijk}\right|^2\right)\left(h^3\sum_{i,j,k=1}^N \left|D_t^+ Q_{ijk}^n \right|^2\right).	
	\end{split}
	\end{equation*}
	Next, we use the Lipschitz continuity of $P(Q)$, and then Lemma~\ref{lem:l2bound},
		\begin{equation*}
	\begin{split}
	\sum_{n=0}^m\left(h^3\sum_{i,j,k=1}^{N} \left|D_t^+ r_{ijk}^n\right|\right)^2 &\leq  C h^{6}\sum_{n=0}^m\sum_{i,j,k=1}^{N} \left(\left|Q_{ijk}^n\right|^2+\left|Q_{ijk}^{n-1}\right|^2\right)\sum_{i,j,k=1}^{N} \left|D_t^+ Q_{ijk}^n \right|^2\\
	&\leq C   \sum_{n=0}^m\norm{D_t^+ Q^n}_h^2 \,\max_{0\leq \ell\leq m}\norm{Q^\ell}_h^2\\
	&\stackrel{\text{Lem. } \ref{lem:l2bound}}{\leq } C \sum_{n=0}^m\norm{D_t^+ Q^n}_h^2
	\end{split}\end{equation*}
	Multiplying by $\Delta t$ and using Corollary~\ref{cor:timeder}, we obtain the result.
	
\end{proof}

\section{Convergence of the scheme}
Using the estimates established in the previous section, we proceed to proving convergence of the scheme~\eqref{3d_mc_nf_d_scheme} to a weak solution of~\eqref{eq:reformulation}.  
To do so, we define piecewise constant interpolations of the grid functions $\{Q^n_{ijk}\}$, $\{r^n_{ijk}\}$ and $\{\overline{P}^{n+\frac{1}{2}}_{ijk}\}$,
 \begin{equation}\label{eq:discrete_solution}
Q^n_{h,\Delta t}(x)=\sum_{i,j,k=0}^{N+1}Q_{ijk}^n\,\chi_{C_{ijk}},\quad r^n_{h,\Delta t}(x)=\sum_{i,j,k=0}^{N+1}r_{ijk}^n\,\chi_{C_{ijk}},\quad P^n_{h,\Delta t}(x)=\sum_{i,j,k=0}^{N+1}\overline{P}_{ijk}^{n+\frac{1}{2}}\,\chi_{C_{ijk}}.
\end{equation}
where  
$C_{ijk}=[(i-\sfrac{1}{2})h,(i+\sfrac{1}{2})h]\times[(j-\sfrac{1}{2})h,(j+\sfrac{1}{2})h]\times[(k-\sfrac{1}{2})h,(k+\sfrac{1}{2})h]$ and $\chi_A$ is the characteristic function of the set $A$. Then, we define 
piecewise constant interpolations in time, %
\begin{align}\label{eq:Q_h}
Q_{h,\Delta t}(t,x)&=\sum_{n=0}^{N_T-1}\,Q^n_{h,\Delta t}(x)\chi_{S_n}(t),\\
\label{eq:r_h}
r_{h,\Delta t}(t,x)&=\sum_{n=0}^{N_T-1}\,r^n_{h,\Delta t}(x)\chi_{S_n}(t),\\
\label{eq:P_h}
P_{h,\Delta t}(t,x)&=\sum_{n=0}^{N_T-1}\,P^n_{h,\Delta t}(x)\chi_{S_n}(t),
\end{align}
where $T=N_T\Delta t$ and $S_n=[n\Delta t, (n+1)\Delta t)$. 
We will show that a subsequence of these converges to a weak solution of~\eqref{eq:reformulation}:
\begin{theorem}
	\label{thm:convQr}
	The piecewise constant interpolations~\eqref{eq:Q_h}--\eqref{eq:P_h} computed using scheme~\eqref{3d_mc_nf_d_scheme} converge up to a subsequence to a weak solution of~\eqref{eq:reformulation} (as in Definition~\ref{def:weakreformulation}) as $h,\Delta t\to 0$.
\end{theorem}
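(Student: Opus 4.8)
The plan is to combine the uniform a priori bounds derived above with a compactness argument to extract a convergent subsequence, and then to pass to the limit in the discrete weak formulation via a Lax--Wendroff type argument, treating the nonlinear coupling term by means of the Lipschitz continuity of $P$ from Theorem~\ref{thm:PLipschitz}. First I would collect the uniform bounds: the energy estimate of Theorem~\ref{thm:energyestimate} gives $E^n\leq E^0$ for all $n$, which bounds $\{Q_{h,\Delta t}\}$ in $L^\infty(0,T;H^1(\Omega))$ and $\{r_{h,\Delta t}\}$ in $L^\infty(0,T;L^2(\Omega))$ uniformly in $h,\Delta t$, Corollary~\ref{cor:timeder} bounds the discrete time derivative $\{D_t^+Q^n\}$ in $L^2([0,T]\times\Omega)$, and Lemma~\ref{lem:l2bound} gives a uniform $L^2$ bound on $Q_{h,\Delta t}$. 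Introducing also a piecewise-linear-in-time interpolant whose time derivative is exactly $D_t^+Q$, I would invoke a discrete Aubin--Lions--Simon compactness lemma: the uniform discrete $H^1$ bound in space together with the $L^2$ bound on the time derivative yields precompactness of $\{Q_{h,\Delta t}\}$ in $L^2([0,T]\times\Omega)$. Passing to a subsequence, we obtain $Q_{h,\Delta t}\to Q$ strongly in $L^2$, $\nabla_h Q_{h,\Delta t}\rightharpoonup\nabla Q$ and $D_t^+Q_{h,\Delta t}\rightharpoonup Q_t$ weakly in $L^2$, and $r_{h,\Delta t}\rightharpoonup r$ weakly-$*$ in $L^\infty(0,T;L^2(\Omega))$; since each $Q^n_{ijk}$ is trace-free and symmetric, so is the limit $Q$. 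Crucially, because $P$ is globally Lipschitz, the strong convergence of $Q_{h,\Delta t}$ upgrades to strong convergence $P_{h,\Delta t}\to P(Q)$ in $L^2$, once one checks that the averaged and extrapolated interpolants ($Q^{n+\frac12}$, $r^{n+\frac12}$, $\overline{P}^{n+\frac12}$) share the limits of their un-shifted counterparts, which holds because $\Delta t\,D_t^+Q$ and $\Delta t\,D_t^+r\to 0$ in the relevant norms.

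Next, I would multiply the two discrete equations in~\eqref{3d_mc_nf_d_scheme} by smooth compactly supported test functions $\varphi$ and $\phi$, multiply by $h^3$, and sum over all grid points and time levels to obtain discrete analogues of~\eqref{eq:weakformQr} and~\eqref{eq:weakr}. Summation by parts in time moves the difference quotient $D_t^+$ onto the test function, producing $\int_0^T\!\int_\Omega Q\,\partial_t\varphi$ in the limit together with the boundary contributions at $t=0$ and $t=T$ (using the convergence $Q^0\to Q_0$). The Laplacian and $\alpha$ terms are rewritten via Lemmas~\ref{lem:sbplaplace} and~\ref{lem:sbpalpha} as discrete gradient and divergence pairings, which converge by the weak convergence of $\nabla_h Q_{h,\Delta t}$ and the consistency of the finite-difference operators applied to the smooth test function. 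The nonlinear term $r^{n+\frac12}\overline{P}^{n+\frac12}$ passes to $r\,P(Q)$ as the product of the weakly convergent factor $r_{h,\Delta t}$ with the strongly convergent factor $P_{h,\Delta t}$; the same weak--strong structure handles the right-hand side $P(Q):Q_t$ of the $r$-equation. Finally, the energy inequality~\eqref{eq:weakr_energy_inequality} follows by summing the discrete energy equality of Theorem~\ref{thm:energyestimate} and passing to the limit, using weak lower semicontinuity of the $H^1$, divergence and $L^2$ norms on the left and of $\int|H|^2$ on the right, together with convergence of the initial energy.

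The main obstacle I anticipate is the compactness step and the consistency of the discrete operators in the limit passage. Establishing strong $L^2$ convergence of $Q_{h,\Delta t}$ through a genuinely discrete Aubin--Lions argument---reconciling the piecewise-constant and piecewise-linear time interpolants and controlling space translates purely from the discrete $H^1$ bound---is the technical heart, and verifying that the discrete summation-by-parts identities converge to the continuous integration-by-parts in~\eqref{eq:weakformQr} requires careful bookkeeping of the finite-difference stencils near the boundary. Once strong convergence of $Q$, and hence of $P(Q)$, is secured, the weak--strong products dispose of the nonlinearities; the one remaining delicacy is that $r$ converges only weakly, so the coupling must always pair the weak $r$-factor against the strongly convergent $P$-factor, a structure the scheme fortunately provides.
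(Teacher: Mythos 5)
Your overall architecture matches the paper's proof almost exactly: the same uniform bounds (Theorem~\ref{thm:energyestimate}, Corollary~\ref{cor:timeder}, Lemma~\ref{lem:l2bound}), a discrete Aubin--Lions argument for strong $L^2$ convergence of $Q_{h,\Delta t}$, the Lipschitz continuity of $P$ from Theorem~\ref{thm:PLipschitz} to upgrade to strong convergence of $P_{h,\Delta t}$, weak--strong pairings for the products $r\,P(Q)$ and $P(Q):Q_t$, and lower semicontinuity for the energy inequality. The paper likewise pairs the weakly convergent $r_{h,\Delta t}$ against the strongly convergent $P_{h,\Delta t}$, exactly the structure you identify as the ``one remaining delicacy.''

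There is, however, one step you gloss over that the paper has to work for: the terminal-time term $\int_\Omega r(T,x)\phi(T,x)\,dx$ in \eqref{eq:weakr}, since Definition~\ref{def:weakreformulation} admits test functions that do not vanish at $t=T$. Weak-$*$ convergence of $r_{h,\Delta t}$ in $L^\infty(0,T;L^2(\Omega))$ gives you no control over a single time slice, so you cannot conclude $\int_\Omega r_{h,\Delta t}(T,\cdot)\phi(T,\cdot)\,dx \to \int_\Omega g(T,\cdot)\phi(T,\cdot)\,dx$ from the convergences you list. The paper closes this by (i) first proving \eqref{eq:weakr} for test functions compactly supported in $[0,T)\times\Omega$, (ii) deriving an additional equicontinuity-in-time bound $\norm{D_t^+ r_{h,\Delta t}}_{L^2([0,T];L^1(\Omega))}\leq C$ (Lemma~\ref{lem:rtbound}, which itself relies on the Lipschitz bound on $P$ and Lemma~\ref{lem:l2bound}), and (iii) invoking the weak time-continuity result of \cite[Lemma 1.1, p.~250]{Temam_NS} together with the auxiliary Lemma~\ref{lem:weakcont} to upgrade to convergence of $\int_\Omega r_{h,\Delta t}(t,\cdot)\phi\,dx$ for every fixed $t$. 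Your proposal never mentions any time-regularity estimate for $r$, and without one this step fails; for $Q$ the analogous terminal term is unproblematic precisely because you do have $D_t^+Q$ bounded in $L^2([0,T]\times\Omega)$. Aside from this omission the proposal is sound and follows the paper's route.
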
 
\begin{proof}
{\bf Step 1: Compactness.}

We apply the first order finite difference operator $D_t^+$ on $Q_{h,\Delta t}$ and $r_{h,\Delta t}$,
\begin{equation}
    D^+_tQ_{h,\Delta t}(t,x)=\sum_{n=0}^{N_T-1}\frac{Q^{n+1}_{h,\Delta t}(x)-Q^n_{h,\Delta t}(x)}{\Delta t}\chi_{S_n},
\quad\text{and }\quad
    D^+_tr_{h,\Delta t}(t,x)=\sum_{n=0}^{N_T-1}\frac{r^{n+1}_{h,\Delta t}(x)-r^n_{h,\Delta t}(x)}{\Delta t}\chi_{S_n}.\label{eq:DTr_h}
\end{equation}
From the energy stability of the scheme, Theorem~\ref{thm:energyestimate}, it follows that $\{\Grad_h Q_{h,\Delta t}\}\subset L^\infty(0,T;L^2(\dom))$
and $\{r_{h,\Delta t}\}\subset L^\infty(0,T;L^2(\dom))$
 uniformly in $\Delta t,h>0$. Corollary~\ref{cor:timeder} yields $\{D_t^+ Q_{h,\Delta t}\}\subset L^2([0,T]\times \dom)$ uniformly in $h,\Delta t>0$.
Moreover, from Lemma~\ref{lem:l2bound}, we get
	\begin{equation*}
\norm{Q_{h,\Delta t}(t)}_{L^2(\dom)}\leq T^\frac{1}{2}  \|D_t^+Q_{h,\Delta t}\|_{L^2([0,T]\times \Omega)}+\|Q_{h,\Delta t}(0)\|_{L^2(\dom)}<\infty,
\end{equation*}
and hence $\{Q_{h,\Delta t}\}\subset L^\infty([0,T];L^2(\dom))$.
Therefore, we can apply a discretized version of the Aubin-Lions lemma~\cite[Lemma A.1]{Trivisa2017}, to conclude that
there exists $Q\in L^2([0,T], H^1(\Omega))$ and a subsequence $\{Q_{h_m, \Delta t_m}\}_{m}$ such that $Q_{h_m, \Delta t_m}\to Q$ in $L^2([0,T]\times\Omega)$ as $m\to \infty$. Due to the uniform bounds, we also obtain $\nabla_{h_m}Q_{h_m, \Delta t_m}\rightharpoonup \nabla Q$ in $L^2([0,T]\times \Omega)$  
and we can extract a weakly convergent subsequence of $\{D^+_tQ_{h_m, \Delta t_m}\}_{m}$ and $\{r_{h_m, \Delta t_m}\}_{m}$, for simplicity still indexed by $m$.  In summary, we have the following:
\begin{equation}
    \begin{aligned}\label{eq:Qweakconvergence}
    &Q_{h_m, \Delta t_m}\to Q,\qquad\text{in } L^2([0,T]\times\Omega),\quad &D^+_tQ_{h_m, \Delta t_m}\rightharpoonup Q_t\qquad\text{in } L^2([0,T]\times\Omega),\\
    &\nabla_hQ_{h_m, \Delta t_m}\rightharpoonup \nabla Q,\qquad\text{in } L^2([0,T]\times\Omega),
  \end{aligned}
\end{equation}
and 
\begin{equation}\label{eq:rConvergence}
    r_{h_m, \Delta t_m}\overset{\ast}{\rightharpoonup} g,\quad \text{ in }\, L^\infty([0,T];L^2(\dom)).
\end{equation}
Since we have shown that $P(Q)$ is Lipschitz continuous with respect to $Q$ in Theorem~\ref{thm:PLipschitz}, we obtain from the strong convergence of $\{Q_{h_m,\Delta t_m}\}_m$ that 
\begin{equation}\label{eq:PQconvergence}
    P(\,Q_{h_m, \Delta t_m}\,)\to P(Q),\qquad\text{in } L^2([0,T]\times\Omega).
\end{equation}
{\bf Step 2: Passing to the limit $m\to\infty$.}

Next, we show that the sequences $\{Q_{h_m,\Delta t_m}\}_m$, $\{r_{h_m,\Delta t_m}\}_m$ converge to a weak solution of~\eqref{eq:reformulation}, that is, that the limit $(Q,r)$ is a weak solution in the sense of Definition~\ref{def:weakreformulation}.
We start with the equation for the variable $r$. 
From the numerical scheme~\eqref{3d_mc_nf_d_scheme}, it follows that
\begin{equation}\label{eq:Dtrhformula}
D^+_tr_{h, \Delta t}=P_{h,\Delta t}(t,x):D^+_tQ_{h, \Delta t}.
\end{equation} 
For any smooth test function $\phi$ with compact support in $[0,T]\times\Omega$, we have $P(Q)\,\phi\in L^2([0,T]\times\Omega)$. Therefore, using the weak convergence $D^+_tQ_{h_m, \Delta t_m}\rightharpoonup Q_t$ in $L^2([0,T]\times\Omega)$ we find that,
\begin{equation}\label{eq:PQDtconvergence}
\int_0^T\!\!\int_\Omega P(Q):D^+_tQ_{h_m, \Delta t_m}\phi \, dx dt\stackrel{m\to \infty}{\longrightarrow}\int_0^T\!\!\int_\Omega P(Q):Q_t \phi\, dx dt.
\end{equation}
Moreover, by the strong convergence of $P_{h_m,\Delta t_m}$ in $L^2([0,T]\times \dom)$, we have
\begin{align*}
&\quad\left|\int_0^T\int_\Omega (\,P_{h_m,\Delta t_m}-P(Q)\,):D^+_tQ_{h_m, \Delta t_m}\phi\,  dx dt\right|\\
&\leq\|\phi\|_{L^\infty
	(\Omega\times[0,T])}\,\|P_{h_m,\Delta t_m}-P(Q)\|_{L^2([0,T]\times\Omega)}\,\|D^+_tQ_{h_m,\Delta t_m}\|_{L^2([0,T]\times\Omega)}\stackrel{m\to \infty}{\longrightarrow} 0.
\end{align*}
Therefore, we can multiply $\phi$ on both sides of (\ref{eq:Dtrhformula}), integrate over both space $\Omega$ and  time interval $[0,T]$ and apply~\eqref{eq:PQDtconvergence} to obtain,
\begin{equation*}
\begin{aligned} \int_0^T\int_\Omega D^+_tr_{h_m, \Delta t_m}\phi\, dx dt&= \int_0^T\int_\Omega P_{h_m,\Delta t_m}:D^+_tQ_{h_m, \Delta t_m}\phi\, dx dt\\
&=\int_0^T\int_\Omega (\,P_{h_m,\Delta t_m}-P(Q)\,):D^+_tQ_{h_m, \Delta t_m}\phi\, dx dt\\
&\quad +\int_0^T\int_\Omega P(Q):D^+_tQ_{h_m, \Delta t_m}\phi\, dx dt\\
&\stackrel{m\to\infty}{\longrightarrow}\int_0^T\int_\Omega P(Q):Q_t\,\phi\, dx dt.
\end{aligned}
\end{equation*}
For the left hand side of~\eqref{eq:Dtrhformula}, we combine the definition of the piecewise constant functions, \eqref{eq:discrete_solution} and   \eqref{eq:DTr_h}, rename the integration variables so that the difference operator acts on the smooth test function:
\begin{align}\label{eq:lhs}
\text{LHS}&=\sum_{n=0}^{N_T-1}\int_{S_n}\int_\Omega\,\frac{r^{n+1}_{h_m,\Delta t_m}(x)-r^n_{h_m,\Delta t_m}(x)}{\Delta t_m}\,\phi(t,x)\, dx dt\notag\\
&=\sum_{n=1}^{N_T-1}\int_{S_n}\int_\Omega\,r^n_{h_m,\Delta t_m}\,\frac{\phi(x,t-\Delta t_m)-\phi(t,x)}{\Delta t_m}\,dx dt\notag\\
&\quad+\frac{1}{\Delta t_m}\,\int_{S_{N_T-1}}\int_\Omega r^{N_T}_{h_m,\Delta t_m}\,\phi(t,x)\,dx dt-\frac{1}{\Delta t_m}\,\int_{S_0}\int_\Omega r^{0}_{h_m,\Delta t_m}\,\phi(t,x)\,dx dt\notag\\
&=\int_0^T\int_\Omega r_{h_m,\Delta t_m}\,\frac{\phi(x,t-\Delta t_m)-\phi(t,x)}{\Delta t_m}\,dx dt\notag\\
&\quad+\frac{1}{\Delta t_m}\,\int_{S_{N_T-1}}\int_\Omega r^{N_T}_{h_m,\Delta t_m}\,\phi(t,x)\,dx dt-\frac{1}{\Delta t_m}\,\int_{S_0}\int_\Omega r^{0}_{h_m,\Delta t_m}\,\phi(t,x)\, dx dt.
\end{align}
When $\phi$ has compact support in $[0,T)\times \dom$, the second term on the right hand side vanishes, and we can use the  weak* convergence of $\{r_{h,\Delta t}\}$, \eqref{eq:rConvergence}, to pass the limit $h,\Delta t \to 0$,
\begin{align*}
\text{LHS}
\stackrel{m\to\infty}{\longrightarrow} -\int_0^T\int_\Omega\, g\phi_t dx dt-\int_\dom r^0(x) \phi(0,x) dx.
\end{align*}
Using~\cite[Lemma 1.1, p. 250]{Temam_NS}, this implies that $r$ is weakly continuous in time on $L^1(\dom)$, since $P(Q):Q_t\in L^2([0,T];L^1(\dom))$ by the Lipschitz continuity of $P$. Lemma~\ref{lem:weakcont} then implies that also $\int r_{h,\Delta t}(t,x)\phi(t,x) dx\to \int g(t,x)\phi(t,x) dx$ for every $t\in [0,T]$ up to a subsequence as $h,\Delta t\to 0$, and hence we can pass to the limit in the left hand side~\eqref{eq:lhs} when $\phi$ is compactly supported in $[0,T]\times\dom$.
Thus the limit $g$ satisfies~\eqref{eq:weakr}.

Next we show that the limit $Q$ satisfies~\eqref{eq:weakformQr}. We take the inner product of the first equation in~\eqref{3d_mc_nf_d_scheme} with a smooth matrix-valued function $\varphi = (\varphi_{\alpha\beta})_{\alpha,\beta=1}^d:[0,T]\times \Omega\to \R^{d\times d}$ integrated over $S_n\times C_{ijk}$, i.e., $\iint_{S_n\times C_{ijk}}\varphi\, dxdt$ and then sum over $n$ and $i,j,k$. We obtain
\begin{equation*}
\begin{aligned}
&\sum_{n=0}^{N_T-1} \sum_{i,j,k=1}^{N}\int_{S_n}\,\int_{C_{ijk}}\frac{Q_{ijk}^{n+1}-Q_{ijk}^n}{\Delta t}\,:\varphi\, dxdt\\
&=\sum_{n=0}^{N_T-1} \sum_{i,j,k=1}^{N}\int_{S_n}\,\int_{C_{ijk}}M\,(L_1\Delta_hQ_{ijk}^{n+\frac{1}{2}}-r^{n+\frac{1}{2}}_{ijk}\overline{P}_{ijk}^{n+\frac{1}{2}}+\frac{L_2+L_3}{2}\alpha_{ijk}^{\hlfstp})\,:\varphi\, dx dt.
\end{aligned}
\end{equation*}
We rewrite this in terms of the piecewise constant functions~\eqref{eq:discrete_solution}:
\begin{equation}\label{eq:discreteweakformulation_2}
\begin{aligned}
&\sum_{n=0}^{N_T-1} \int_{S_n}\,\int_{\Omega}D_t^+Q_{h_m,\Delta t_m}^{n}\,:\varphi\, dxdt\\
&=\sum_{n=0}^{N_T-1}  \int_{S_n}\,\int_{\Omega}M\,(L_1\Delta_hQ_{h_m,\Delta t_m}^{n+\frac{1}{2}}-r^{n+\frac{1}{2}}_{h_m,\Delta t_m}\overline{P}_{h_m,\Delta t_m}^{n+\frac{1}{2}}+\frac{L_2+L_3}{2}\alpha_h(Q_{h_m,\Delta t_m})^{\hlfstp})\,:\varphi\, dx dt.
\end{aligned}
\end{equation}
(Here $\alpha_h(Q_{h,\Delta t})^{\hlfstp}=\dfrac{1}{2}(\alpha_h(Q_{h,\Delta t}^n)+\alpha_h(Q_{h,\Delta t}^{n+1}))$.)
Since $\{D_t^+Q_{h,\Delta t}\}$ is weakly convergent in $L^2$, c.f.~\eqref{eq:Qweakconvergence}, we can pass to the limit $m\to\infty$ in the left hand side and obtain
\begin{equation*}
\sum_{n=0}^{N_T-1} \int_{S_n}\,\int_{\Omega}D_t^+Q_{h_m,\Delta t_m}^{n}\,:\varphi\, dxdt\longrightarrow \int_0^T\int_{\dom} Q_t: \varphi\, dx dt.
\end{equation*}
Integrating by parts, we obtain the left hand side of~\eqref{eq:weakformQr}.
To deal with the right hand side of~\eqref{eq:discreteweakformulation_2}, we introduce the discrete forward and difference operators $D_k^+$ and $D^-_{k}$ for matrix functions $\varphi=(\varphi_{\alpha \beta})_{\alpha\beta}$, $1\leq \alpha,\beta\leq d$. Similar to \eqref{def:operator}, $D_k^+$ denotes the forward difference in the coordinate direction $k$. 
For example, for $x=(x_1,x_2,x_3)$ and $k=1$, we define
\begin{equation*}
\left(D_1^\pm\varphi(x)\right)_{\alpha\beta}=\pm\,\frac{\varphi_{\alpha\beta}(t,x_1\pm h,x_2,x_3)-\varphi_{\alpha\beta}(t,x_1,x_2,x_3)}{h}.
\end{equation*}
In addition, we introduce the discrete gradient and divergence operators for smooth $\varphi$:
\begin{align*}
&(\Grad_h^\pm \varphi)_{\alpha\beta}=\left((D^\pm_1 \varphi)_{\alpha\beta},\,(D^\pm_2 \varphi)_{\alpha\beta},\,(D^\pm_3 \varphi)_{\alpha\beta}\right)^\top,
&(\Div_h \varphi)_\beta= \sum_{\alpha=1}^d (D^c_i \varphi)_{\alpha\beta}
\end{align*}
where $\varphi_{\alpha\beta}$ is the $(\alpha,\beta)$-entry of the matrix $\varphi$. 
Renaming the integration variables such that the difference operators act on the test functions in the right hand side of~\eqref{eq:discreteweakformulation_2} and then using \eqref{eq:Qweakconvergence} and \eqref{eq:rConvergence}, the right hand side of \eqref{eq:discreteweakformulation_2} satisfies
\begin{align*}
\text{RHS}&=-ML_1\sum_{n=0}^{N_T-1}\int_{S_n}\int_{\dom}\nabla^-_{h_m} Q_{h_m,\Delta t_m}^{n+\frac{1}{2}}\cdot\nabla_{h_m}^- \varphi- M\sum_{n=0}^{N_T-1} \int_{S_n}\int_{\dom}r^{n+\frac{1}{2}}_{h_m,\Delta t_m}\overline{P}_{h_m,\Delta t_m}^{n+\frac{1}{2}}:\varphi \, dx dt\\
&\quad-M\frac{(L_2+L_3)}{2}\sum_{n=0}^{N_T-1} \int_{S_n}\int_{\dom}\sum_{\alpha,\beta,\gamma=1}^d \left(D_\gamma^c Q_{h_m,\Delta t_m}^n\right)_{\beta\gamma}(D_\alpha^c\varphi)_{\alpha\beta}\\
&\quad-M\frac{(L_2+L_3)}{2}\sum_{n=0}^{N_T-1} \int_{S_n}\int_{\dom}\sum_{\alpha,\beta,\gamma=1}^d  \left(D_{\gamma}^c Q_{h_m,\Delta t_m}^n \right)_{\alpha\gamma}(D_{\beta}^c \varphi)_{\alpha\beta}\\
&\quad+\frac{M(L_2+L_3)}{d}\sum_{n=0}^{N_T-1} \int_{S_n}\int_{\dom}\sum_{\alpha\beta,\gamma=1}^d  \left(D_{\alpha}^c Q_{h_m,\Delta t_m}^n\right)_{\gamma\alpha}(D_{\gamma}^c\varphi)_{\beta\beta}\\
&\stackrel{m\to\infty}{\longrightarrow}-ML_1\int_0^T\int_\Omega\sum_{\alpha,\beta=1}^d\Grad Q_{\alpha\beta}\cdot\Grad\varphi_{\alpha\beta}\,dx\,dt-M\lim\limits_{m\to \infty}\int_0^T \int_\Omega r_{h_m,\Delta t_m}\,P_{h_m,\Delta t_m}:\varphi\,dx\,dt\\
&\quad-M\,\frac{L_2+L_3}{2} \int_0^T\int_\Omega  \sum_{\alpha,\beta,\gamma=1}^d \left(\partial_{\gamma}Q_{\beta\gamma}\partial_\alpha \varphi_{\alpha\beta} + \partial_{\gamma}Q_{\alpha\gamma}\partial_\beta\varphi_{\alpha\beta} -\frac{2}{d}  \partial_{\alpha}Q_{\gamma\alpha} \partial_\gamma\varphi_{\beta\beta}\right)\,dxdt,
\end{align*}
where $\nabla_h^- Q_{h,\Delta t}^{n+\frac{1}{2}}\cdot\nabla^-_h\varphi=\sum_{\alpha,\beta=1}^d\,\nabla_h^- (Q_{h,\Delta t}^{n+\frac{1}{2}})_{\alpha \beta}\cdot(\nabla_h^-\varphi)_{\alpha \beta}.$ 
It remains to show

\begin{equation}\label{eq:rhlimit}
\lim\limits_{m\to\infty}\int_0^T \int_\Omega r_{h_m,\Delta,t_m}\,P_{h_m,\Delta t_m}:\varphi\,dx\,dt=\int_0^T \int_\Omega g\,P(Q):\varphi\,dx\,dt.
\end{equation}
To prove \eqref{eq:rhlimit}, we take the difference of the two terms, that is,
\begin{align*}
&\left|\int_0^T \int_\Omega r_{h_m,\Delta t_m}\,P_{h_m,\Delta t_m}:\varphi\,dx\,dt-\int_0^T \int_\Omega g\,P(Q):\varphi\,dx\,dt\right|\\
&=\left|\int_0^T \int_\Omega r_{h_m,\Delta t_m}\,(P_{h_m,\Delta t_m}-P(Q)):\varphi\,dx\,dt-\int_0^T \int_\Omega (g-r_{h_m,\Delta t_m})\,P(Q):\varphi\,dx\,dt\right|\\
&\leq\underbrace{\left|\int_0^T \int_\Omega r_{h_m,\Delta t_m}\,(P_{h_m,\Delta t_m}-P(Q)):\varphi\,dx\,dt\right|}_{\Romannum{1}}+\underbrace{\left|\int_0^T \int_\Omega (g-r_{h_m,\Delta t_m})\,P(Q):\varphi\,dx\,dt\right|}_{\Romannum{2}}.
\end{align*}
By Cauchy-Schwarz inequality, \eqref{eq:PQconvergence}, and the energy estimate, Theorem~\ref{thm:energyestimate},
\begin{equation*}
\Romannum{1}\leq\|\varphi\|_{L^\infty
	(\Omega\times[0,T])}\,\|P_{h_m,\Delta t_m}-P(Q)\|_{L^2([0,T]\times\Omega)}\,\|r_{h_m, \Delta t_m}\|_{L^2([0,T]\times\Omega)}\to0.
\end{equation*}
Note that $P(Q)\,\varphi\in L^2([0,T]\times\Omega)$ and $r_{h_m,\Delta t_m}\rightharpoonup g$ in $L^2$,
therefore $\Romannum{2}\to0$. This proves \eqref{eq:rhlimit}.
 Combining the estimates for the left and the right hand side, we see that $Q$ satisfies~\eqref{eq:weakformQr}.
 The trace-free condition and the symmetry are linear constraints and therefore conserved under the $L^2$-convergence of $Q_{h,\Delta t}$. The energy inequality is a direct result by passing the limits in Theorem ~\ref{thm:energyestimate} and using Fatou's lemma. Hence the limit $(Q,r)$ is a weak solution in the sense of Definition~\ref{def:weakreformulation}.
\end{proof}

\subsection{Equivalence of weak formulations ($r=r(Q)$)}
Now that we have established that the scheme converges to a weak solution of~\eqref{eq:reformulation}, it remains to show that such a weak solution is in fact a weak solution of~\eqref{eq:qtensorflow}. To do so, we show that the limit $g$ established above in~\eqref{eq:rConvergence} satisfies $g=r(Q)$ weakly, where $Q$ is the limit of $Q_{h_m,\Delta t_m}$ and 
$r(Q)$ is defined in~\eqref{eq:defr}. Plugging this into the weak formulation~\eqref{eq:weakform}, we see that $Q$ is in fact a weak solution in the sense of Definition~\ref{def:weakQ}. We thus need to prove the following lemma:
\begin{lem}\label{lem:requivalance}
	Assume that $(Q,g)$ is a weak solution in the sense of Definition~\ref{def:weakreformulation}. Then for any smooth $\psi$ with compact support in $(0,T)\times\Omega$ (compactly supported in both time and space),	we have
\begin{equation*}
\int_0^T\int_\Omega\, g\,\psi dx dt=\int_0^T\int_\Omega\,
\,r(Q)\,\psi \, dx dt,
\end{equation*}
where $r(Q)$ is defined in~\eqref{eq:defr}.
\end{lem}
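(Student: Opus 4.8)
The plan is to prove the lemma in two stages. First I would establish a \emph{weak chain rule}: that $r(Q)$ satisfies exactly the same weak equation~\eqref{eq:weakr} as $g$, but with $r$, $r(T)$, $r_0$ replaced by $r(Q)$, $r(Q(T))$, $r(Q_0)$. Second, I would run a uniqueness argument exploiting that $g$ and $r(Q)$ then solve the same linear first-order equation in time with the same initial datum, which forces $g=r(Q)$ and hence the claimed integral identity.

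For the chain rule, note first that $Q\in L^\infty(0,T;H^1(\dom))$ with $Q_t\in L^2([0,T]\times\dom)$ gives $Q\in C([0,T];L^2(\dom))$, so the temporal trace $Q(0)$ is well defined and, by~\eqref{eq:weakformQr}, equals $Q_0$; by the Sobolev embedding $H^1(\dom)\hookrightarrow L^4(\dom)$ (valid for $d\le 3$) together with the growth bound~\eqref{rqestimate2} we also have $r(Q)\in L^\infty(0,T;L^2(\dom))$. I would extend $Q$ to a neighborhood of $[0,T]$ by the constant values $Q(0)$ and $Q(T)$ (so that $Q_t$ extends by zero, using precisely the continuity $Q\in C([0,T];L^2)$), mollify in time to get $Q^\varepsilon=\rho_\varepsilon * Q$, and apply the classical chain rule $\partial_t r(Q^\varepsilon)=P(Q^\varepsilon):\partial_t Q^\varepsilon$, which is licit since $r(Q)\ge A>0$ keeps the radicand in~\eqref{eq:defr} bounded away from zero, making $Q\mapsto r(Q)$ smooth with $\delta r/\delta Q=P$ by~\eqref{eq:defP}. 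Testing against smooth $\phi$ with compact support in $\dom$ in space and integrating by parts in time gives
\begin{equation*}
\int_0^T\!\!\int_\dom r(Q^\varepsilon)\phi_t\,dxdt=\int_\dom\!\big(r(Q^\varepsilon(T))\phi(T)-r(Q^\varepsilon(0))\phi(0)\big)dx-\int_0^T\!\!\int_\dom P(Q^\varepsilon):\partial_t Q^\varepsilon\,\phi\,dxdt.
\end{equation*}

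The limit $\varepsilon\to0$ is where the real work lies. Mollification preserves the $L^\infty_tL^2_x$ and $L^\infty_tL^4_x$ bounds and yields $Q^\varepsilon\to Q$ in $L^2$, $\partial_t Q^\varepsilon\to Q_t$ in $L^2$, and $Q^\varepsilon(0)\to Q_0$, $Q^\varepsilon(T)\to Q(T)$ in $L^2$. The Lipschitz continuity of $P$ (Theorem~\ref{thm:PLipschitz}) then gives $P(Q^\varepsilon)\to P(Q)$ strongly in $L^2$, so the product $P(Q^\varepsilon):\partial_t Q^\varepsilon\to P(Q):Q_t$ in $L^1$; and since $P$ has at most linear growth, the mean-value estimate $|r(A)-r(B)|\lesssim(1+|A|_F+|B|_F)|A-B|_F$ combined with the uniform $L^4$ bounds yields $r(Q^\varepsilon)\to r(Q)$ as well as $r(Q^\varepsilon(0))\to r(Q_0)$, $r(Q^\varepsilon(T))\to r(Q(T))$ in $L^1$. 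Passing to the limit and using $Q(0)=Q_0$ shows that $r(Q)$ satisfies~\eqref{eq:weakr} in the stated sense.

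Finally, since $(Q,g)$ is a weak solution, $g$ satisfies~\eqref{eq:weakr} with the same right-hand side $-\int P(Q):Q_t\,\phi$ and the same initial term $r_0=r(Q_0)$. Subtracting the two identities and restricting to test functions with $\phi(T)=0$ (so the terminal terms drop and the initial terms cancel) gives $\int_0^T\!\int_\dom (g-r(Q))\phi_t\,dxdt=0$. Choosing $\phi(t,x)=\beta(t)\chi(x)$ with $\chi\in C_c^\infty(\dom)$ and $\beta(T)=0$, this says the scalar $m(t)=\int_\dom(g-r(Q))\chi\,dx$ has vanishing distributional derivative on $(0,T)$, hence is constant, and taking $\beta(0)\ne0$ forces that constant to be $0$. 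As $\chi$ is arbitrary, $g=r(Q)$ a.e., which gives the identity. The main obstacle is precisely the chain-rule limit: controlling the quadratically growing term $r(Q^\varepsilon)$ and the time-boundary traces $r(Q^\varepsilon(0)),r(Q^\varepsilon(T))$, for which the global Lipschitz bound on $P$ and the embedding $H^1\hookrightarrow L^4$ are the essential inputs.
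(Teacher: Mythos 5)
Your proposal is correct and follows essentially the same route as the paper: first establish the weak chain rule $\int_0^T\int_\Omega P(Q):Q_t\,\psi\,dx\,dt=-\int_0^T\int_\Omega r(Q)\,\psi_t\,dx\,dt$ by approximating $Q$ with smooth (in time) functions, using the pointwise identity $\partial_t r(Q^\varepsilon)=P(Q^\varepsilon):\partial_t Q^\varepsilon$ and passing to the limit via the Lipschitz continuity of $P$ (Theorem~\ref{thm:PLipschitz}) together with the mean-value bound on $r$, and then identify $g$ with $r(Q)$ by matching initial data. The only minor difference is the final step: the paper invokes Temam's Lemma 1.1 to conclude that $g-r(Q)$ equals a time-independent $L^2$ function and then evaluates at $t=0$, whereas you fix the constant directly with separated test functions $\beta(t)\chi(x)$ with $\beta(T)=0$, $\beta(0)\neq 0$ --- an equally valid and slightly more elementary variant.
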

\begin{proof}
Since $(Q,g)$ is a weak solution of~\eqref{eq:reformulation}, we have that
\begin{equation}\label{eq:weakgr}
-\int_0^T\int_{\dom}g\psi_t\, dx dt = \int_0^T\int_{\dom} P(Q):Q_t \,\psi \,dx dt
\end{equation}
for $\psi$ smooth and compactly supported in $(0,T)\times\dom$.
For the right hand side, if $Q$ is a smooth function, we can use chain rule and integration by parts to get
\begin{equation*}
    \int_0^T\int_\Omega P(Q):Q_t\,\psi=\int_0^T\int_\Omega\,r(Q)_t\,\psi=-\int_0^T\int_\Omega r(Q)\psi_t.
\end{equation*}
Since $Q\in L^2([0,T],H^1(\Omega))$ and $Q_t\in L^2([0,T]\times\dom)$, we can find a sequence of smooth function $\left\{Q_n\right\}_n$ with $Q_n\to Q$ in $L^2([0,T],H^1(\Omega))$ and $(Q_n)_t\to Q_t$ in $L^2([0,T]\times\Omega)$. 
We note that by mean value theorem,
\begin{equation*}
    r(Q)-r(Q_n)=P(\Tilde{Q}):(Q-Q_n),
\end{equation*}
for some $\Tilde{Q}=\lambda_1 Q+\lambda_2 Q_n$ where $\lambda_1,\lambda_2\in[0,1]$ and $\lambda_1+\lambda_2=1$. Noting that $P(Q) $ is Lipschitz continuous with respect to $Q$, so  $|P(\Tilde{Q})|_F\leq \tilde{L}|\Tilde{Q}|_F$ for some constant $\tilde{L}>0$. Therefore,
\begin{equation*}
   \left|r(Q)-r(Q_n)\right|_F=\left|P(\Tilde{Q}):(Q-Q_n)\right|_F\leq\tilde{L}\,(\left|Q\right|_F+\left|Q_n\right|_F)\,\left|Q-Q_n\right|_F.
\end{equation*}
Integrating it over time and space and we obtain
\begin{align*}
    \|r(Q)-r(Q_n)\|_{L^1([0,T]\times\Omega)}&\leq\tilde{L}\,\left(\|Q\|_{L^2([0,T]\times\Omega)}+\|Q_n\|_{L^2([0,T]\times\Omega)}\right)\,||Q-Q_n||_{L^2([0,T]\times\Omega)}\to0,
\end{align*}
since $Q$ and $Q_n$ are both bounded in $L^2([0,T]\times \Omega)$.
So if we use smooth functions to approximate $Q$, we obtain,
\begin{equation*}
  \left|\int_0^T\int_\Omega \,(\,r(Q)-r(Q_n)\,)\,\psi_t\right|\leq \|\psi_t\|_{L^\infty([0,T]\times\Omega)}\,\left\|r(Q)-r(Q_n)\right\|_{L^1([0,T]\times\Omega)}\to0.
\end{equation*}

On the other hand, using the Lipschitz continuity of $P(Q)$, we arrive at
\begin{align*}
    &\left|\int_0^T\int_\Omega\,(\,P(Q):Q_t\,\psi-P(Q_n):(Q_n)_t\,\psi\,)\right|\\
    &\leq\left|\int_0^T\int_\Omega\,(\,P(Q):Q_t-P(Q):(Q_n)_t\,)\,\psi\right|+\left|\int_0^T\int_\Omega\,(\,P(Q):(Q_n)_t-P(Q_n):(Q_n)_t\,)\,\psi\right|\\
    &\leq \|P(Q)\|_{L^2}\,\|\psi\|_{L^\infty}\,\|Q_t-(Q_n)_t\|_{L^2}+\|P(Q)-P(Q_n)\|_{L^2}\,\|\psi\|_{L^\infty}\,\|(Q_n)_t\|_{L^2} \stackrel{n\to\infty}{\longrightarrow}0.
\end{align*}
Therefore, we have for any $Q\in L^2([0,T],H^1(\Omega))$ with $Q_t\in L^2([0,T]\times\dom)$
\begin{equation*}
    \int_0^T\!\!\int_\Omega P(Q):Q_t\,\psi=\lim\limits_{n\to\infty}\int_0^T\!\!\int_\Omega P(Q_n):(Q_n)_t\,\psi=-\lim\limits_{n\to\infty}\int_0^T\!\!\int_\Omega r(Q_n)\psi_t=-\int_0^T\!\!\int_\Omega r(Q)\psi_t.
\end{equation*}
We use this in~\eqref{eq:weakgr} to obtain
\begin{equation*}
    \int_0^T\int_\Omega\, g\psi_t=
    -\int_0^T\int_\Omega P(Q):Q_t\,\psi=\int_0^T\int_\Omega r(Q)\psi_t.
\end{equation*}
From~\cite[Lemma 1.1, p. 250]{Temam_NS}, we obtain that $g$ as well as $r(Q)$ are absolutely continuous and satisfy for every test function $\psi\in L^\infty(\dom)$ and almost every $t\in [0,T]$
\begin{equation*}
    \int_\Omega g(t,x)\psi(x)\,dx=\int_\Omega r(Q(t,x))\psi(x)\,dx+\int_\Omega f(x)\psi(x)\,dx
\end{equation*} 
for some $f\in L^2(\Omega)$. 
However, since $g$ satisfies \eqref{eq:weakr}, by letting $T$ be $0$ in \eqref{eq:weakr}, we find 
\begin{equation*}
    \int_\Omega g(0,x)\psi(x)\,dx=\int_\Omega r^0(x)\psi(x)\,dx.
\end{equation*}
and so $f=0$ in $L^2(\Omega)$. This proves the lemma.
 
\end{proof}
This lemma shows that
\begin{equation*}
    \int_0^T \int_\Omega g\,P(Q):\varphi\,dx\,dt=\int_0^T \int_\Omega r(Q)\,P(Q):\varphi\,dx\,dt,
\end{equation*}
for any smooth and compactly supported $\varphi:[0,T]\times \dom\to \R^{d\times d}$. Plugging this into~\eqref{eq:weakformQr}, we see that the identity becomes~\eqref{eq:weakform} and hence any weak solution in the sense of Definition~\ref{def:weakreformulation} is in fact a weak solution in the sense of Definition~\ref{def:weakQ}. Hence we have shown:
\begin{theorem}
	\label{thm:convtoqtensorflow}
	Approximations computed by the numerical scheme~\eqref{3d_mc_nf_d_scheme} converge as $\Delta t,h\to 0$, up to a subsequence, to weak solutions of~\eqref{eq:qtensorflow} as in Definition~\ref{def:weakQ}.
\end{theorem}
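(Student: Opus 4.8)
The plan is to assemble the two principal results already established, namely the convergence of the scheme to a weak solution of the reformulated system (Theorem~\ref{thm:convQr}) and the identification of the auxiliary limit with $r(Q)$ (Lemma~\ref{lem:requivalance}). First I would invoke Theorem~\ref{thm:convQr} to extract a subsequence $\{(Q_{h_m,\Delta t_m},r_{h_m,\Delta t_m})\}_m$ that converges as $h_m,\Delta t_m\to 0$ to a pair $(Q,g)$ which is a weak solution of~\eqref{eq:reformulation} in the sense of Definition~\ref{def:weakreformulation}. In particular $Q\in L^\infty(0,T;H^1(\dom))$ with $Q_t\in L^2([0,T]\times\dom)$, and $Q(t,x)$ is trace-free and symmetric for every $(t,x)$; these are exactly the regularity and structural requirements of Definition~\ref{def:weakQ}, and since the trace-free and symmetry conditions are linear they are preserved in the limit.

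Next I would apply Lemma~\ref{lem:requivalance} to this limit pair, which gives $\int_0^T\int_\Omega g\,\psi\,dx\,dt=\int_0^T\int_\Omega r(Q)\,\psi\,dx\,dt$ for every smooth $\psi$ compactly supported in $(0,T)\times\Omega$, so that $g=r(Q)$ as elements of $L^2$. The crucial algebraic point is then the identity $r(Q)\,P(Q)=S(Q)$, which is immediate from the definition $P(Q)=S(Q)/r(Q)$ in~\eqref{eq:defP}. Using the explicit form of $S(Q)$ in~\eqref{eq:defS}, substituting $g=r(Q)$ turns the reaction term $\int_0^T\int_\Omega g\,P(Q):\varphi$ of the weak formulation~\eqref{eq:weakformQr} into $\int_0^T\int_\Omega\big(aQ-b(Q^2-\tfrac{1}{d}\tr(Q^2)I)+c\,\tr(Q^2)Q\big):\varphi\,dx\,dt$. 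With this replacement, identity~\eqref{eq:weakformQr} coincides term by term with~\eqref{eq:weakform}, so $Q$ satisfies the weak formulation of the original flow~\eqref{eq:qtensorflow}.

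It then remains to verify the energy inequality. Using $g=r(Q)$ together with the defining relation $\tfrac{1}{2}r(Q)^2=\mathcal{F}_B(Q)+A_0$ coming from~\eqref{eq:defr}, the term $\tfrac{1}{2}|r|^2$ in~\eqref{eq:weakr_energy_inequality} becomes $\mathcal{F}_B(Q)+A_0$ at time $t$ and, since $r_0=r(Q_0)$, likewise $\mathcal{F}_B(Q_0)+A_0$ at time $0$; the additive constant $A_0$ integrates to the same value $A_0|\Omega|$ on both sides and therefore cancels, reducing~\eqref{eq:weakr_energy_inequality} precisely to~\eqref{eq:weakform_energy_inequality}. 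Moreover, with $r=r(Q)$ the dissipation $H=L_1\Delta Q+\tfrac{L_2+L_3}{2}\alpha(Q)-r\,P(Q)$ equals $L_1\Delta Q+\tfrac{L_2+L_3}{2}\alpha(Q)-S(Q)$, which is the $H$ appearing in Definition~\ref{def:weakQ}, so the two dissipation terms also agree. Collecting these observations shows that $Q$ is a weak solution of~\eqref{eq:qtensorflow} in the sense of Definition~\ref{def:weakQ}, completing the argument. The only genuinely delicate step in this chain is the identification $g=r(Q)$: because $g$ and $Q$ arise as independent weak limits of the separately evolved grid functions $r_{h,\Delta t}$ and $Q_{h,\Delta t}$, this is not a pointwise chain rule but a weak one, and it is exactly this obstacle that Lemma~\ref{lem:requivalance} overcomes via smooth approximation and the Lipschitz continuity of $P$; here it can simply be cited.
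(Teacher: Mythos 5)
Your proposal is correct and follows essentially the same route as the paper: invoke Theorem~\ref{thm:convQr} for convergence to a weak solution $(Q,g)$ of the reformulation, use Lemma~\ref{lem:requivalance} to identify $g=r(Q)$, and then observe that $r(Q)P(Q)=S(Q)$ turns~\eqref{eq:weakformQr} into~\eqref{eq:weakform}. Your additional verification that the energy inequality~\eqref{eq:weakr_energy_inequality} reduces to~\eqref{eq:weakform_energy_inequality} via $\tfrac12 r(Q)^2=\mathcal{F}_B(Q)+A_0$ (with the constant $A_0|\Omega|$ cancelling) is a detail the paper leaves implicit, but it is consistent with the paper's argument.
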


\section{Numerical results in 2D}\label{sec:simulation}
We shall now present some numerical experiments in 2D. In this case, the term $\alpha(Q)$ in~\eqref{eq:QZ} simplifies to $\alpha(Q)= \Delta Q$. We therefore denote $L:=L_1+\dfrac{1}{2}(L_2+L_3)$. We will use the parameters
\begin{equation}\label{eq:parameters}
a = -0.3 \quad b = -4 \quad c = 4 \quad A_0 = 500 \quad M =1,
\end{equation}
unless specified otherwise. The scheme has been implemented in MATLAB and the code used to run the following numerical examples can be found at~\url{github.com/VarunMG/Liquid-Crystal-Energy-Stable}.
 \subsection{Numerical Example 1: Convergence test}
 First we check whether the formal second order of accuracy of the scheme manifests in practice when simulating a numerical example with smooth solution. We consider the domain $\dom=[0,2]^2$, $L=0.001$ and the initial condition
 \begin{equation}
     \label{eq:smoothinit1}
     Q_0 = \mathbf{n}_0 \mathbf{n}_0^\top - \frac{|\mathbf{n}_0|^2}{2} I_2,
 \end{equation}
 where
 \begin{equation}
     \label{eq:smoothinit2}
     \mathbf{n_0}(x,y) =\begin{pmatrix}  x(2-x)y(2-y)\\
\sin(\pi x)\sin(0.5\pi y).
     \end{pmatrix}
 \end{equation}
 \subsubsection{Refinement in space} We compute up to time $T=0.4$ using 400 time steps and we will use a reference solution $(Q^{\text{ref}},r^{\text{ref}})$ to show the spatial accuracy of our scheme. The reference solution is computed with $400$ grid points in each spatial direction and $4000$ time steps. All the errors are measured in $L^2$-norm
 \begin{equation*}
     \mathcal{E}^Q_{\alpha\beta} = \norm{Q_{\alpha\beta}^{\text{ref}}(T,\cdot)-(Q_h)_{\alpha\beta}(T,\cdot)}_{L^2(\dom)}, \quad  \mathcal{E}^r = \norm{r^{\text{ref}}(T,\cdot)-r_h(T,\cdot)}_{L^2(\dom)} \end{equation*}
 where $\alpha,\beta\in \{1,2\}$.
  We compute the numerical solutions with $n=10,20,40,80,$ grid points in each spatial direction. The $L^2$-errors and convergence rates for $Q_{11}, Q_{12}$ and $r$ are reported in Table~\ref{tab:convspace}. (Note that due to the symmetry and the trace-free property, $Q_{11}=-Q_{22}$ and $Q_{12}=Q_{21}$.) We note that for the components of $Q$ the expected second order convergence rate is almost achieved whereas the convergence rate for the variable $r$ is lower. We suspect that more mesh refinement may be needed to see the optimal order for the variable $r$.
  \begin{table}[H]
      \centering
      \begin{tabular}{|c|c|c|c|c|c|c|}
      \hline
           $h$ &        error for $Q_{11}$  & order for $Q_{11}$ &   error for $Q_{12}$   & order for $Q_{12}$ & error for $r$ &  order for $r$\\
           \hline 
   $0.2$  & $1.3509\times 10^{-2}$ &  NaN &      $2.3646\times 10^{-2}$ & NaN & $6.7561\times 10^{-3}$ & NaN\\
   $0.1$  & $3.7509\times 10^{-3}$ & $1.8486$  &    $6.4006\times 10^{-3}$ & $ 1.8854$ & $1.2878\times 10^{-3}$ & $2.3912$ \\
   $0.05$  & $9.9049\times 10^{-4}$ &  $1.9210$   &      $1.6690\times 10^{-3}$ & $1.9392$ & $3.8885\times10^{-4}$ & $1.7277$\\
   $0.025$  & $2.6162\times 10^{-4}$ &  $1.9206$ &        $4.4341\times 10^{-4}$ & $1.9123$ & $1.5189\times 10^{-4}$&     $1.3562$ \\
   \hline
      \end{tabular}
      \caption{Errors and rates for spatial refinement in example~\eqref{eq:smoothinit1},~\eqref{eq:smoothinit2}.}
      \label{tab:convspace}
  \end{table}

Figure~\ref{fig:energyexample1} shows the decay of the discrete energy for $n=80$ and $N_T=400$ up to time $T=0.4$. As predicted by the theory, the energy decays monotonically.

 \begin{figure}
     \centering
     \includegraphics[width=4in]{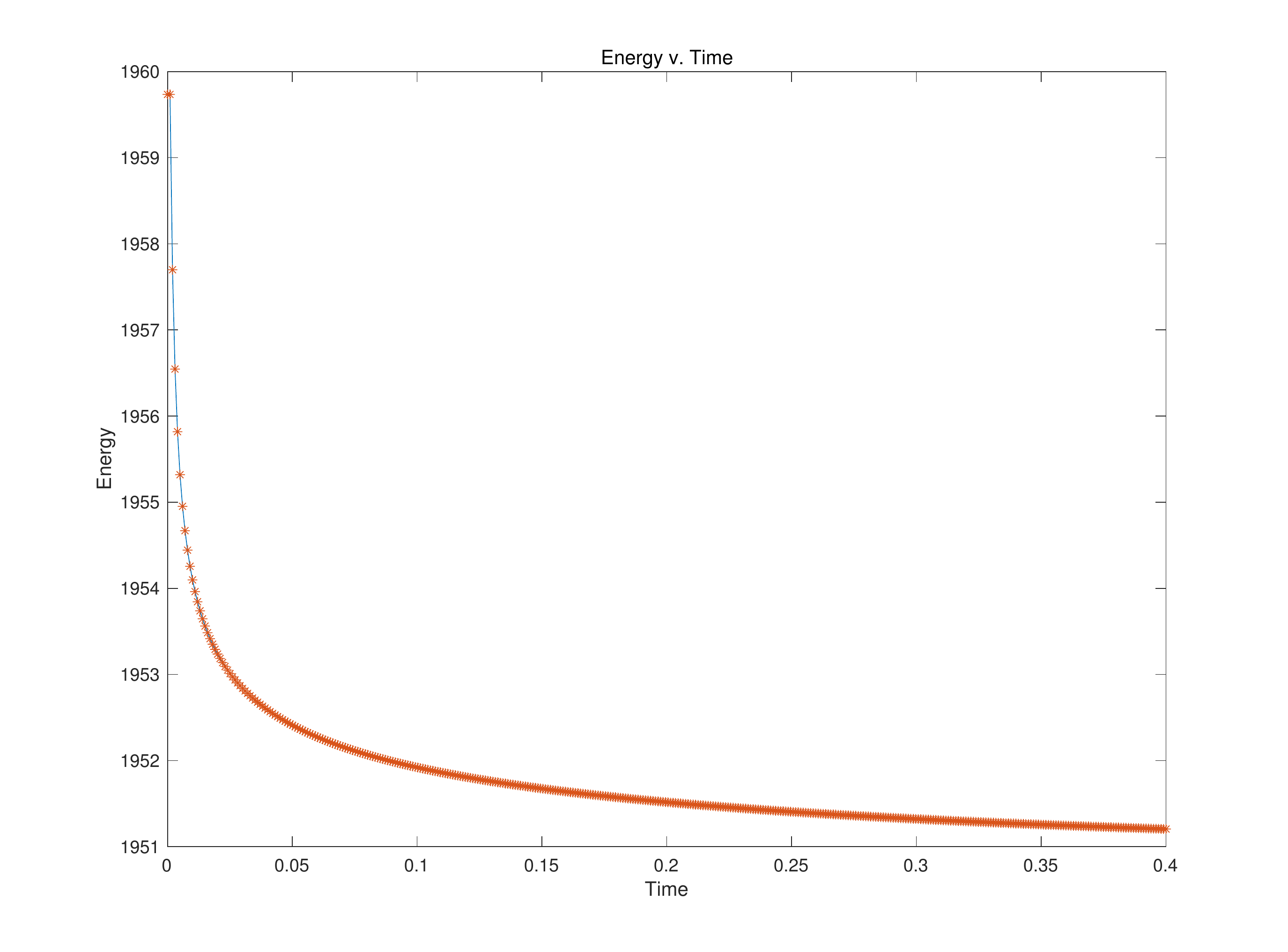}
     \caption{Energy decay when $T=0.4$ with $400$ time steps and $80$ grid points in each spatial direction.}

     \label{fig:energyexample1}
 \end{figure}

 \subsubsection{Refinement in time}
 We use the same setting (initial value and parameters) as for the spatial accuracy test and compute up to time $T=0.4$ with 100 grid points in each spatial direction. Similar as above, we will compare the approximations with different time step sizes with a reference solution which is computed with the same number of spatial points and 8000 time steps.
 The errors and convergence orders for the approximations with 40, 80, 160, 320, 640 time steps are shown in Table~\ref{tab:convtime}. We observe second order accuracy as expected.
  \begin{table}[H]
      \centering
      \begin{tabular}{|c|c|c|c|c|c|c|}
      \hline
           $\Delta t$ &        error for $Q_{11}$    &      order for $Q_{11}$ & error for $Q_{12}$ & order for $Q_{12}$ & error for $r$ & order for $r$\\
           \hline 
   $0.01$  & $7.87395\times 10^{-4}$ &  NaN &   $1.49178\times 10^{-3}$   & NaN & $9.15588\times 10^{-4}$& NaN\\
   $5\times 10^{-3}$  & $1.94110\times 10^{-4}$ &  $2.02022$    &    $3.67711\times 10^{-4}$ & 2.02039 & $2.19902\times 10^{-4}$ & 2.05784\\
   $2.5\times 10^{-3}$  & $4.81199\times 10^{-5}$ &  $2.01217$    &    $9.11505\times 10^{-5}$ & 2.01225 & $5.38631\times 10^{-5}$ & 2.02949 \\
   $1.25\times10^{-3}$  & $1.19280\times 10^{-5}$ &  2.01223 &        $2.25937\times 10^{-5}$ & 2.01233 & $1.32752\times 10^{-5}$ & 2.02056\\ 
   $6.25\times10^{-4}$  & $2.91895\times 10^{-6}$ &  2.03083 &        $5.52893\times 10^{-6}$ & 2.03085 & $3.23961\times 10^{-6}$ & 2.03485\\
   $3.125\times10^{-4}$  & $6.71610\times 10^{-7}$ &  2.11975 &        $1.27212\times 10^{-6}$ & 2.11976 & $7.44387\times 10^{-7}$ & 2.12170\\
   \hline
      \end{tabular}
      \caption{Errors and rates for time refinement in example~\eqref{eq:smoothinit1},~\eqref{eq:smoothinit2}}
      \label{tab:convtime}
  \end{table}
 
 \subsection{Numerical Example 2: Defects in Liquid Crystals}
 
We consider the domain $\Omega=[0,2] \times [0,2]$ and $L=0.001$. In this example, we will study the dynamics of defects in liquid crystals.
 For the initial condition, we take
 \begin{equation}\label{eq:q0}
Q_0 = \mathbf{n}_0 \mathbf{n}_0^\top - \frac{|\mathbf{n}_0|^2}{2} I_2,
\end{equation}
where
 \begin{equation}
     \label{eq:smoothinit3}
     \mathbf{n_0}(x,y) =\begin{pmatrix}  \log(x^2+1)\,(x-2)^2\,\sin(\frac{\pi y}{2})\,(e^{1.5}-e^x)\\
(y-2)\,(y-3)\,\sin(\frac{\pi y}{10})\,\sin(\frac{\pi x}{2})\,(0.7-y)
     \end{pmatrix}.
 \end{equation}
We use 40 grid points in space in each dimension and 4000 time steps up to $T=4$. 
 As we can see from Figure~\ref{fig:defect}, initially, there is only one defect, which is located at $(1.5, 0.7)$. This configuration is not stable and generally splits into two different defects. They move away from each other and towards the boundary. 
 Figure~\ref{fig:defect_eig} depicts the largest eigenvalue of matrix $Q$ at different times. We observe that as the two defects move, the largest eigenvalue decays in a neighborhood of the defects rapidly to $0$. The eigenvalue is generally decreasing and tends to $0$ everywhere as time evolves. This behaviour is a consequence of the boundary condition and the energy dissipation property. 
 
 \begin{figure} [h]
	\begin{tabular}{ccc}
		\subfloat[$t=0$]{\includegraphics[width = 1.8in]{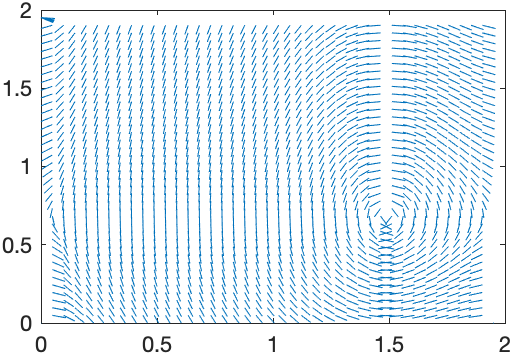}} &
		\subfloat[$t=0.5$]{\includegraphics[width = 1.8in]{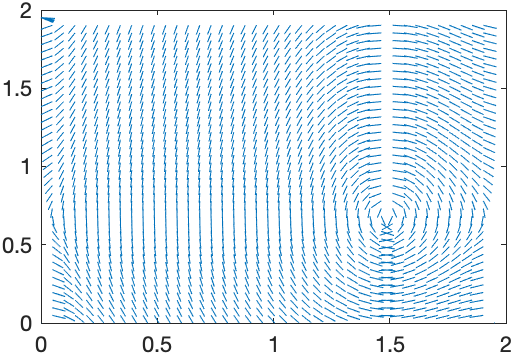}}
		\subfloat[$t=1$]{\includegraphics[width = 1.8in]{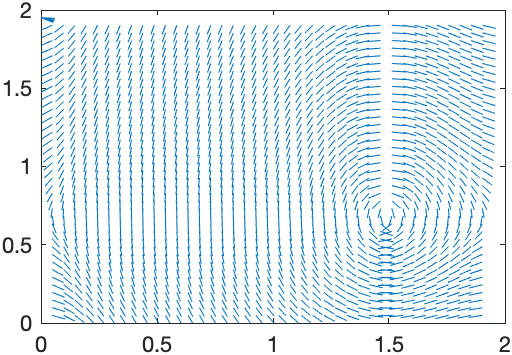}} \\
		\subfloat[$t=1.5$]{\includegraphics[width = 1.8in]{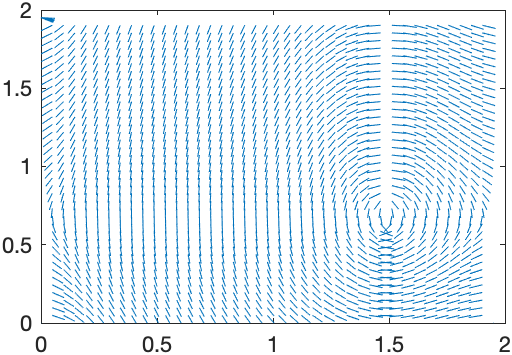}}&
		\subfloat[$t=2$]{\includegraphics[width = 1.8in]{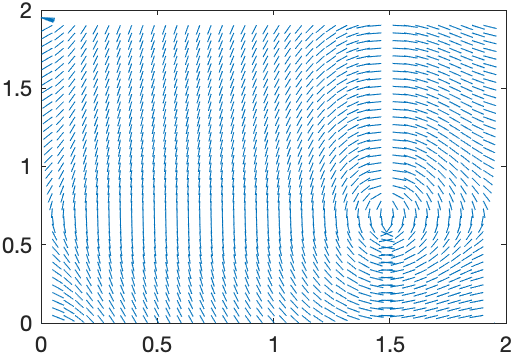}} 
		\subfloat[$t=2.5$]{\includegraphics[width = 1.8in]{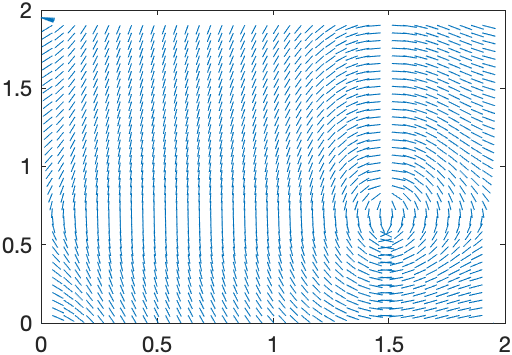}}\\
		
		\subfloat[$t=3$]{\includegraphics[width = 1.8in]{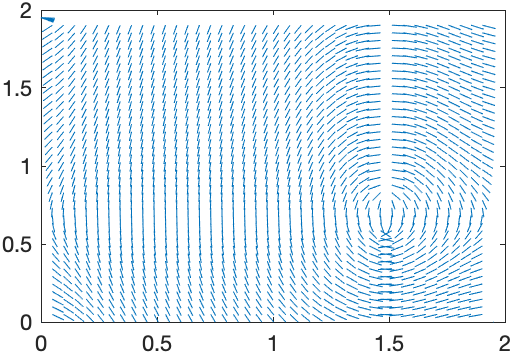}}&
		
		\subfloat[$t=3.5$]{\includegraphics[width = 1.8in]{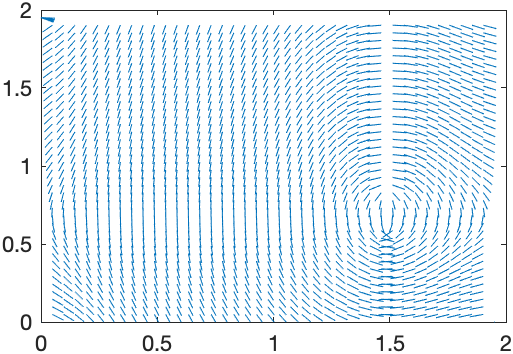}}
		
		\subfloat[$t=4$]{\includegraphics[width = 1.8in]{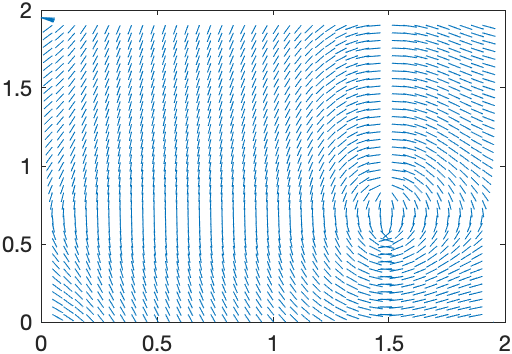}}
			\end{tabular}
	\caption{Simulation for initial data~\eqref{eq:q0}, \eqref{eq:smoothinit3}.}
	\label{fig:defect}
\end{figure}

 \begin{figure} [h]
	\begin{tabular}{ccc}
		\subfloat[$t=0$]{\includegraphics[width = 1.8in]{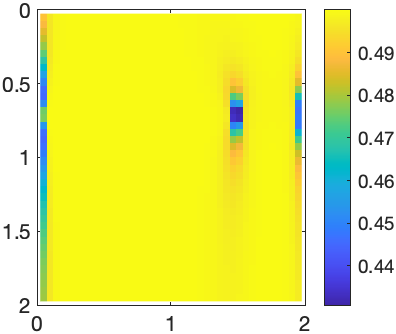}} &
		\subfloat[$t=0.5$]{\includegraphics[width = 1.8in]{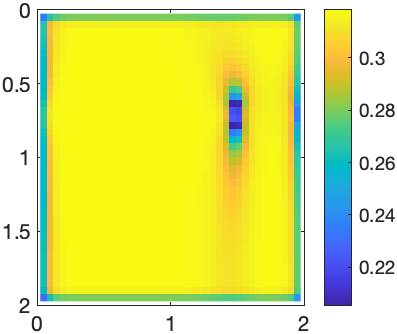}}
		\subfloat[$t=1$]{\includegraphics[width = 1.8in]{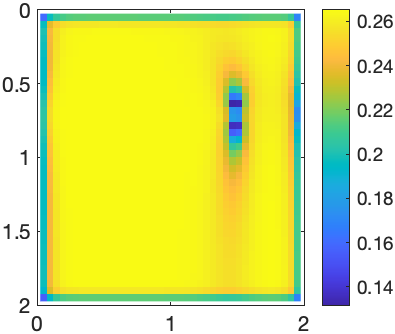}} \\
		\subfloat[$t=1.5$]{\includegraphics[width = 1.8in]{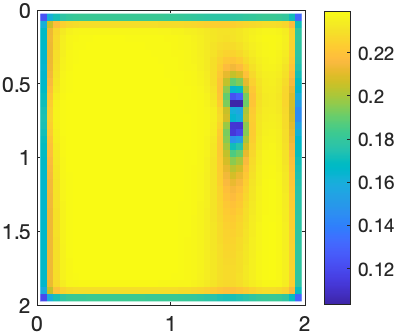}}&
		\subfloat[$t=2$]{\includegraphics[width = 1.8in]{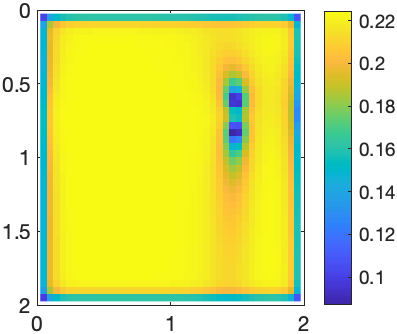}} 
		\subfloat[$t=2.5$]{\includegraphics[width = 1.8in]{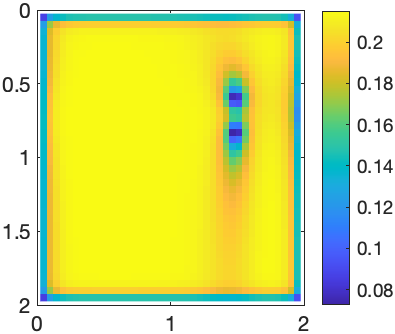}}\\
		
		\subfloat[$t=3$]{\includegraphics[width = 1.8in]{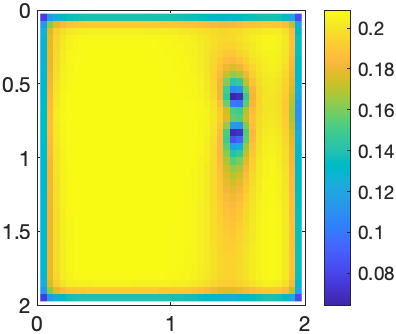}}&
		
		\subfloat[$t=3.5$]{\includegraphics[width = 1.8in]{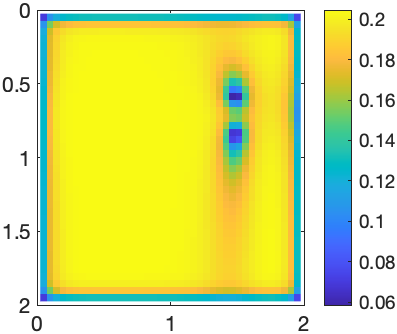}}
		
		\subfloat[$t=4$]{\includegraphics[width = 1.8in]{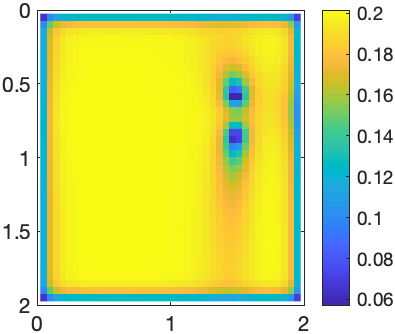}}
			\end{tabular}
	\caption{The largest eigenvalue of $Q$ in the simulation for initial data~\eqref{eq:q0}, \eqref{eq:smoothinit3}.}
	\label{fig:defect_eig}
\end{figure}

\subsection{Numerical Example 3: `Disappearing hole'}
We consider $\Omega=[0,1] \times [0,1]$ and use the parameters $a = -0.2, b=1, c=1, L=0.0025$. As an initial condition, we use~\eqref{eq:q0} with
\begin{equation}
\label{eq:init1}
\widetilde{\mathbf{n}}_0(x,y) =\begin{pmatrix}
x(1-x)y(1-y)\\ \sin(2\pi x)\sin(2\pi y);
\end{pmatrix},\quad \mathbf{n}_0= \frac{\widetilde{\mathbf{n}}_0}{|\widetilde{\mathbf{n}}_0|},
\end{equation}
and $50$ grid points in space in each dimension and $100$ time steps. The simulation is displayed in Figure~\ref{fig:hole}. We observe that the initial misalignment disappears first along the axes and then propagates in a shrinking circle towards the center of the domain and eventually disappears. This behavior was stable with respect to mesh refinement. The discrete energy~\eqref{eq:energydisc} decays at first rapidly and then approaches a constant state corresponding to the alignment of the director field along the $y$-axis as seen in Figure~\ref{disappearingholeenergey}.
\begin{figure} [h]
	\begin{tabular}{ccc}
		\subfloat[$t=0$]{\includegraphics[width = 2in]{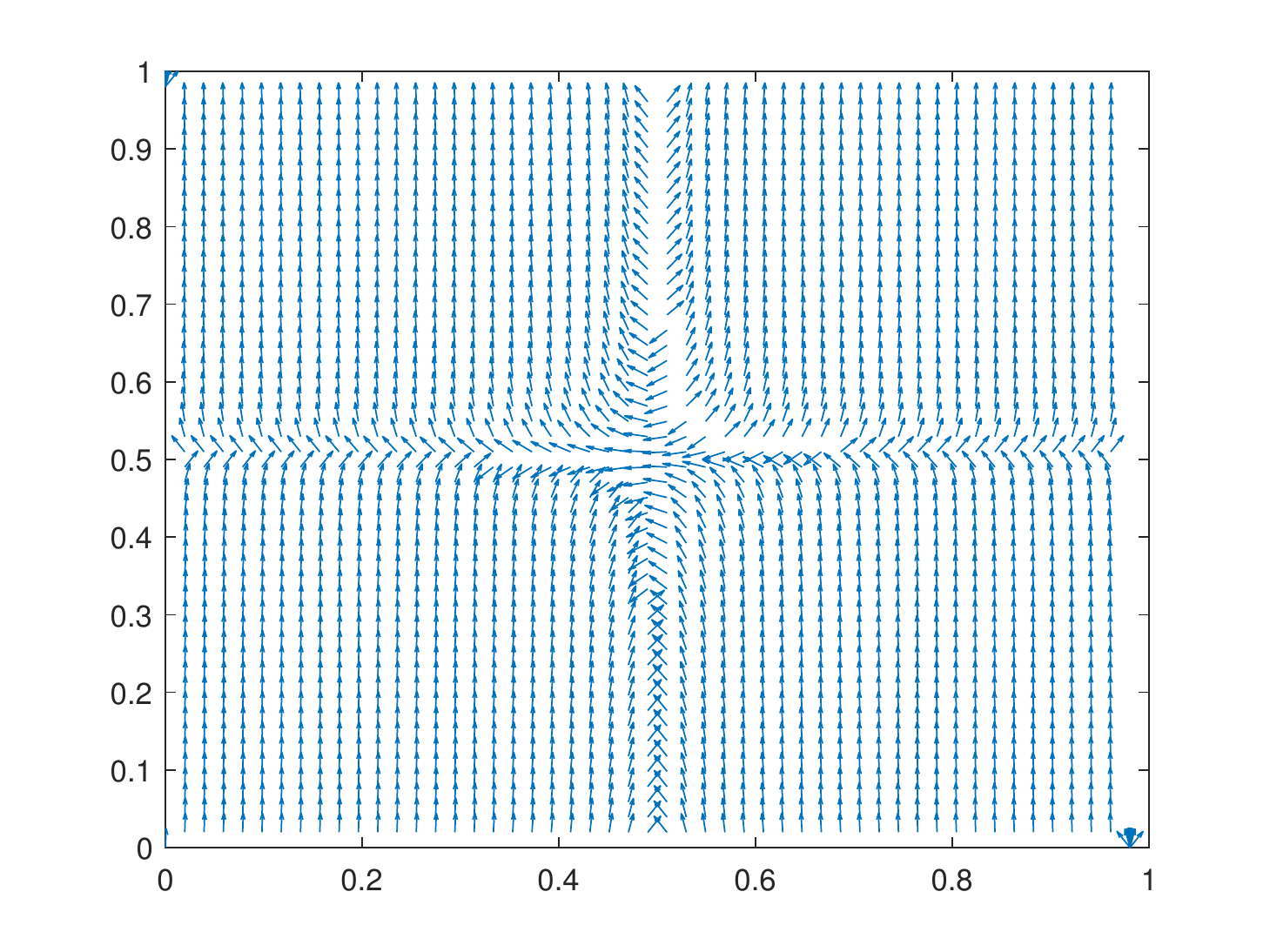}} &
		\subfloat[$t=0.2$]{\includegraphics[width = 2in]{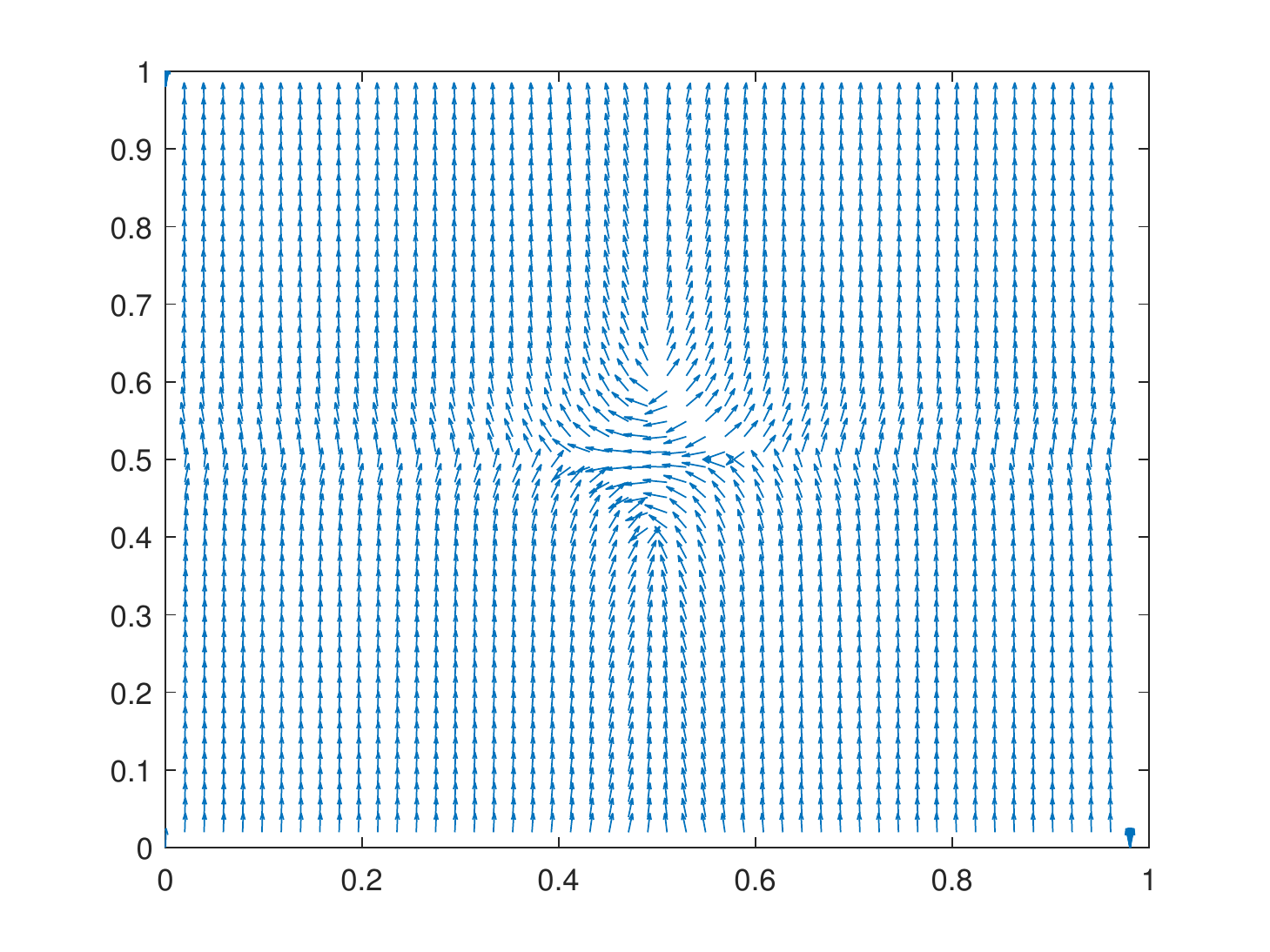}} &
		\subfloat[$t=0.4$]{\includegraphics[width = 2in]{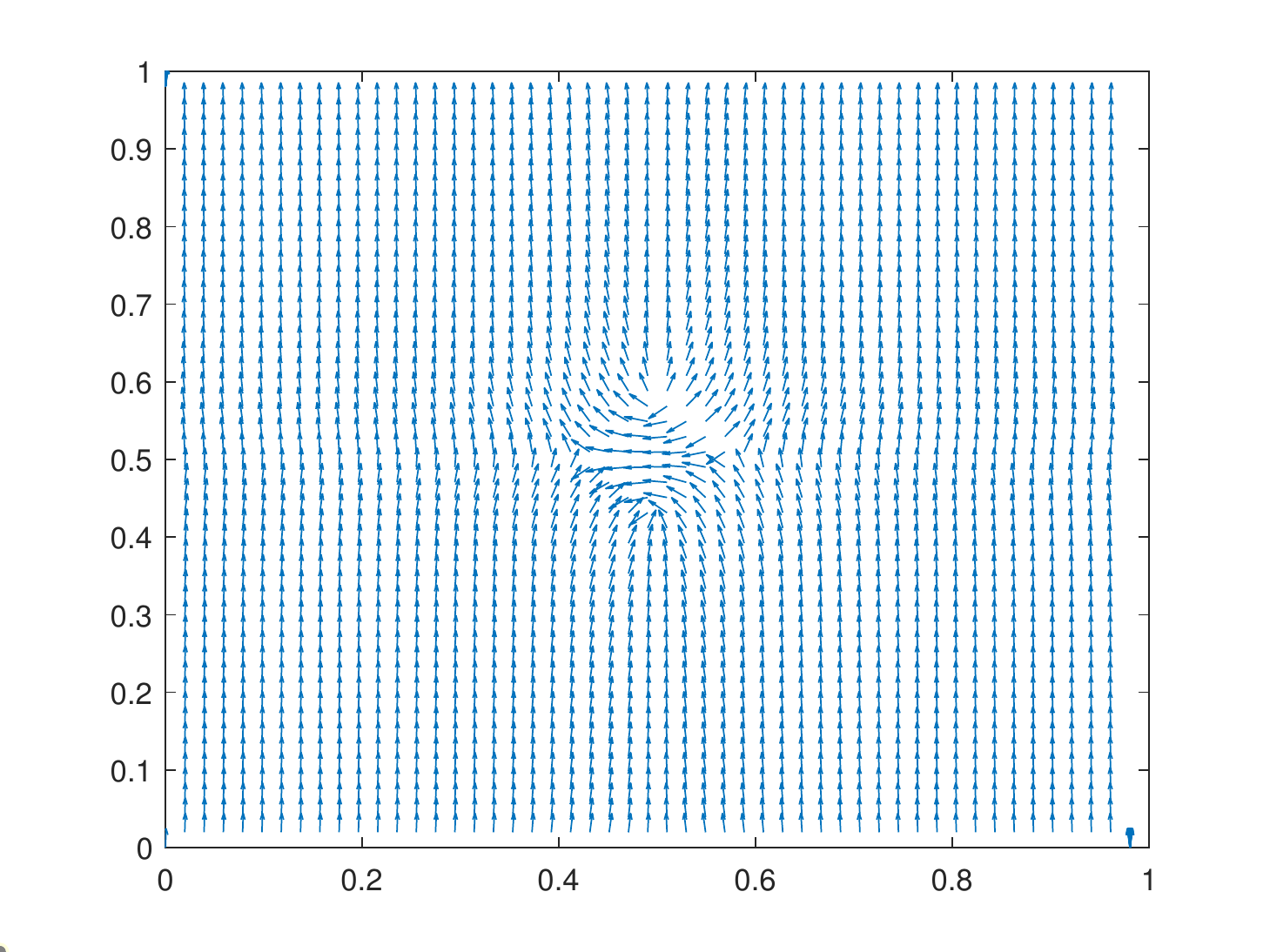}} \\
		\subfloat[$t=0.6$]{\includegraphics[width = 2in]{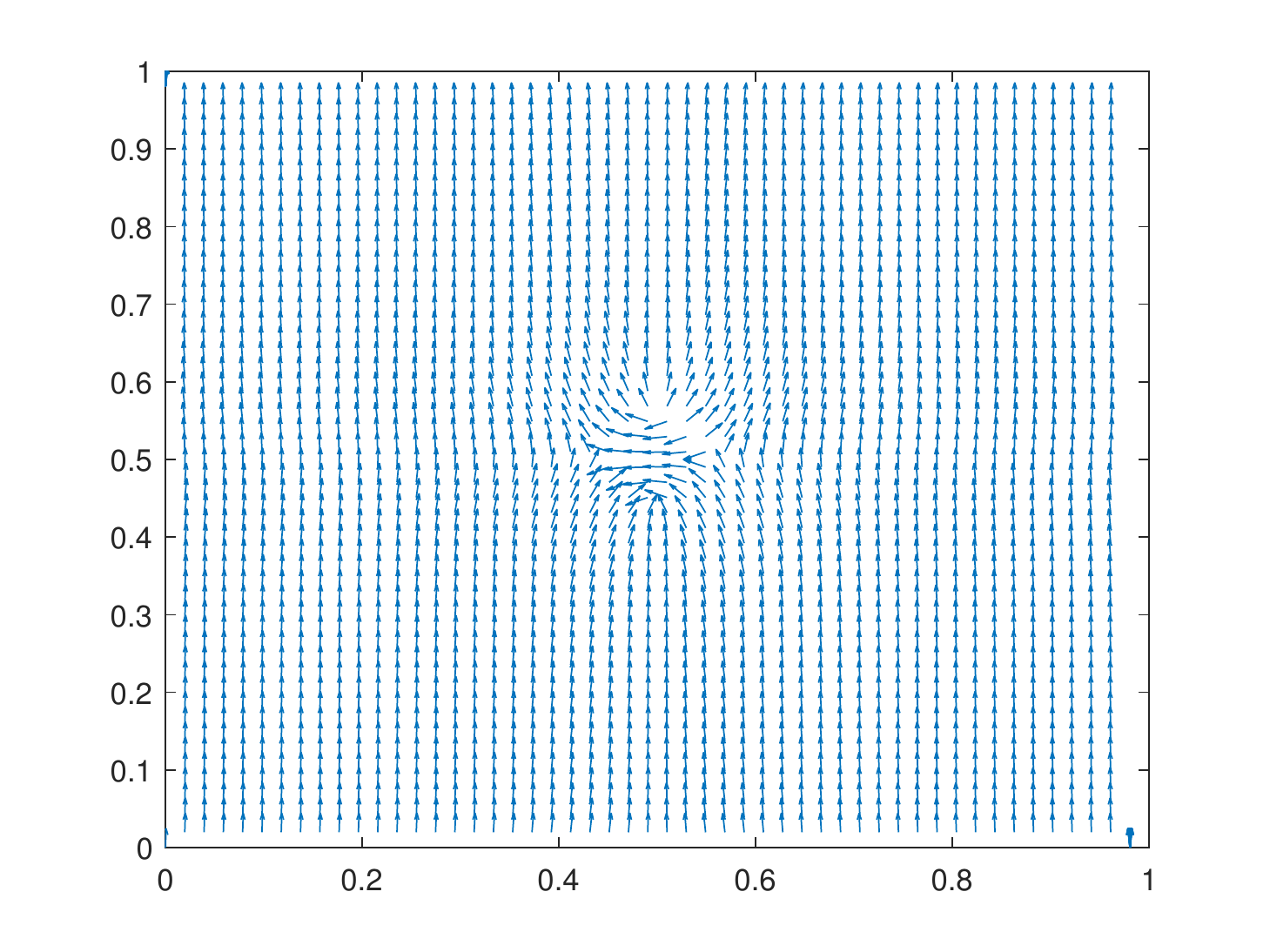}} &
		\subfloat[$t=0.8$]{\includegraphics[width = 2in]{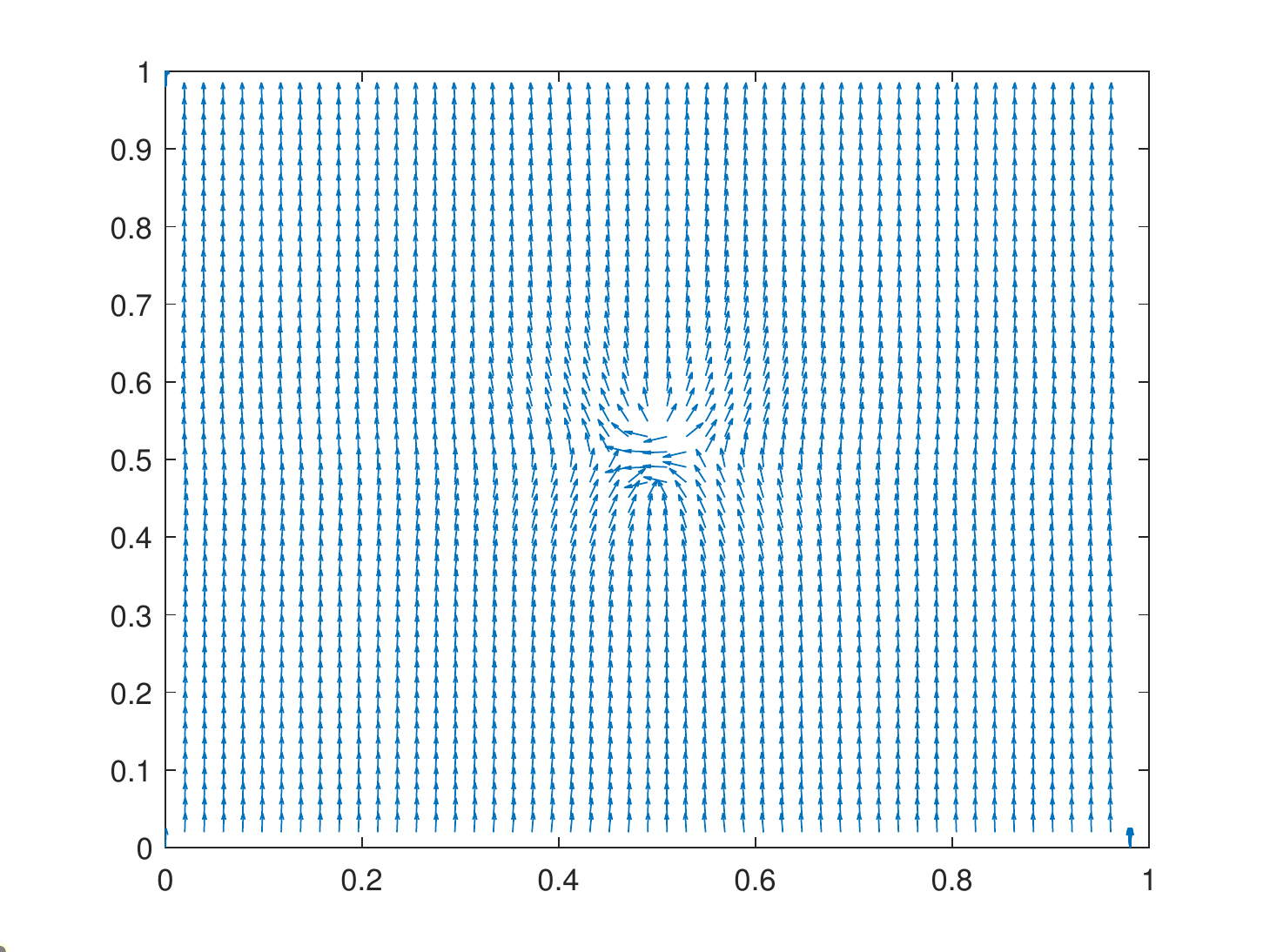}} &
		\subfloat[$t=1$]{\includegraphics[width = 2in]{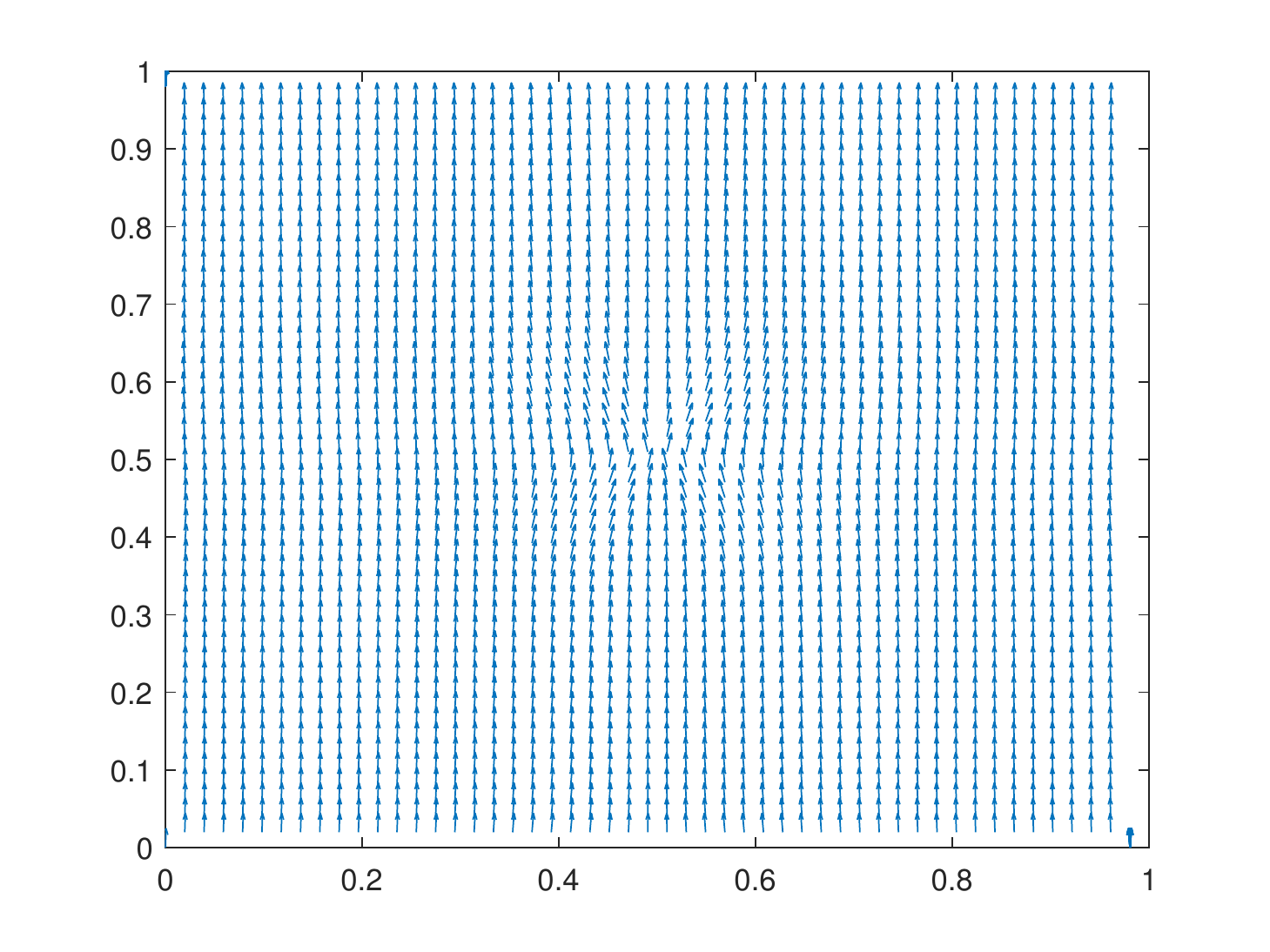}}
	\end{tabular}
	\caption{Simulation for initial data~\eqref{eq:q0}, \eqref{eq:init1}.}
	\label{fig:hole}
\end{figure}

\begin{figure}
    \centering
    \includegraphics[width=4in]{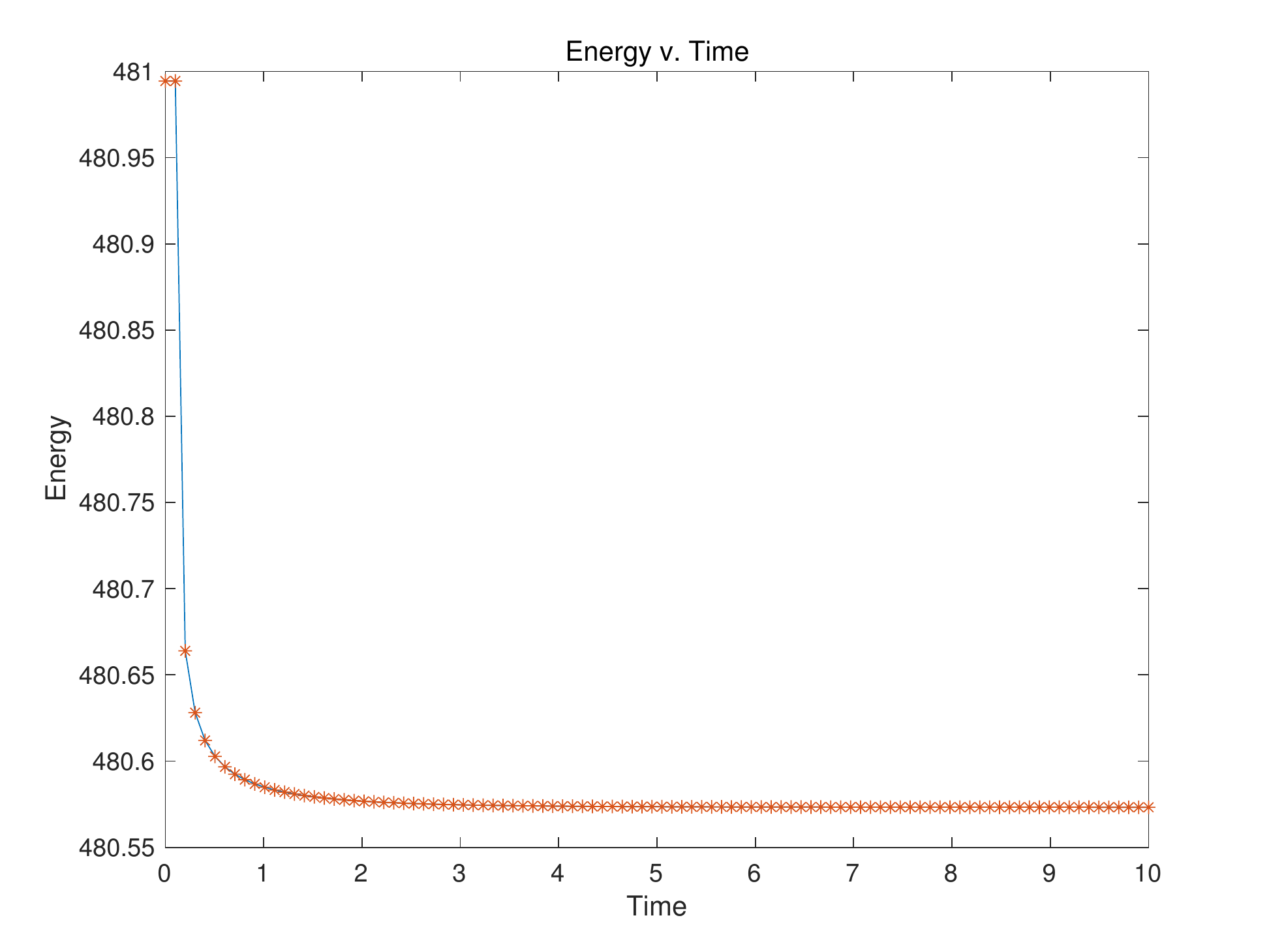}
    \caption{Energy when $T=10$ with $100$ time steps and 50 grid points in each spatial direction.}
    \label{disappearingholeenergey}
\end{figure}

\section{Acknowledgements}
We thank Max Hirsch for the careful reading of our manuscript and pointing out several mistakes and typos. F.W. and Y.Y. acknowledge partial funding by NSF awards DMS No.\ 1912854 and  OIA-DMR No.\ 2021019 and V.G. was supported in part by a NASA internship from the Pennsylvania Space Grant Consortium, no. NNX15AK06H.
\appendix
\section{Some Lemmas}
\begin{lemma} \label{a:sbp}
Let $A_{ijk}$ and $B_{ijk}$ be scalar quantities at grid point $(x_i, y_j, z_k)$ such that $A_{ijk} = 0$ at boundary values, i.e. boundary conditions~\eqref{eq:dirchlet},~\eqref{eq:ghostpoints}.
Then 
\begin{equation*}
\sum_{i,j,k=0}^{N+1} A_{ijk} D_\beta^+ B_{ijk} = -\sum_{i,j,k=0}^{N+1} B_{ijk} D_\beta^- A_{ijk},\quad  
\sum_{i,j,k=0}^{N+1} A_{ijk} D_\beta^- B_{ijk} = -\sum_{i,j,k=0}^{N+1} B_{ijk} D_\beta^+ A_{ijk},
\end{equation*}
and
\begin{equation*}
    \sum_{i,j,k=0}^{N+1} A_{ijk} D_\beta^c B_{ijk} = -\sum_{i,j,k=0}^{N+1} B_{ijk} D_\beta^c A_{ijk}.
\end{equation*}
for $\beta = 1, 2 \text{ or } 3$.
\end{lemma}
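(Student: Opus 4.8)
The plan is to prove all three identities by the standard discrete summation-by-parts technique: expand the difference quotient, shift the summation index in the term where the index of $B$ is shifted, and then observe that the boundary terms generated by the shift are precisely the ghost- and boundary-node values of $A$, which vanish by~\eqref{eq:dirchlet} and~\eqref{eq:ghostpoints}. Since the three coordinate directions are treated identically (the shift simply acts on the $i$, $j$, or $k$ index), it suffices to carry out the argument for $\beta=1$; the cases $\beta=2,3$ follow verbatim with shifts in $j$ and $k$ respectively.

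For the first identity, I would write
\begin{equation*}
\sum_{i,j,k=0}^{N+1} A_{ijk}\,D_1^+ B_{ijk}
= \frac{1}{h}\sum_{i,j,k=0}^{N+1}\bigl(A_{ijk}B_{i+1,j,k}-A_{ijk}B_{ijk}\bigr),
\end{equation*}
and then reindex the first sum by $m=i+1$. The reindexed sum runs over $m=1,\dots,N+2$, so it differs from the sum over $m=0,\dots,N+1$ by the removed term $A_{-1,j,k}B_{0,j,k}$ at $m=0$ and the added term $A_{N+1,j,k}B_{N+2,j,k}$ at $m=N+2$; both carry a factor $A_{-1,j,k}$ or $A_{N+1,j,k}$, which are zero by the boundary conditions. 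After relabeling $m\to i$ this yields $\sum_{i,j,k} A_{i-1,j,k}B_{ijk}$, and collecting terms gives $-\tfrac1h\sum_{i,j,k}B_{ijk}(A_{ijk}-A_{i-1,j,k}) = -\sum_{i,j,k} B_{ijk}\,D_1^- A_{ijk}$, as claimed. The second identity is completely symmetric, shifting instead by $m=i-1$ and using that $A_{0,j,k}$ and $A_{N+2,j,k}$ vanish.

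For the central-difference identity I would split $D_1^c B_{ijk}=\tfrac{1}{2h}(B_{i+1,j,k}-B_{i-1,j,k})$ and apply both shifts: the $B_{i+1}$ term is reindexed as above (boundary contributions proportional to $A_{-1,j,k}$ and $A_{N+1,j,k}$), while the $B_{i-1}$ term is reindexed by $m=i-1$ (boundary contributions proportional to $A_{0,j,k}$ and $A_{N+2,j,k}$). All four of these values vanish by~\eqref{eq:dirchlet} and~\eqref{eq:ghostpoints}, so recombining gives $-\tfrac{1}{2h}\sum_{i,j,k}B_{ijk}(A_{i+1,j,k}-A_{i-1,j,k})=-\sum_{i,j,k}B_{ijk}\,D_1^c A_{ijk}$.

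The argument is entirely routine; the only point requiring care is bookkeeping of exactly which ghost or boundary values of $A$ are produced by each index shift, and confirming that the prescribed conditions cover all of them. This is why the boundary conditions were imposed on both the physical boundary nodes and the ghost nodes: the forward and backward operators each need two of the four values $A_{-1,j,k},A_{0,j,k},A_{N+1,j,k},A_{N+2,j,k}$ to vanish, and the central operator needs all four. Hence the stated hypotheses are exactly what is required, and no regularity or sign assumptions on $B$ enter.
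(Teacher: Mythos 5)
Your proposal is correct and follows essentially the same summation-by-parts/index-shift argument as the paper, which likewise treats only $\beta=1$ and uses the vanishing of $A$ at the boundary and ghost nodes to absorb the terms produced by the shift (the paper first restricts the sum to $i,j,k=1,\dots,N$ before shifting, while you shift over the full range and discard the leftover boundary terms — a purely cosmetic difference). Your closing observation about which of the four values $A_{-1,j,k},A_{0,j,k},A_{N+1,j,k},A_{N+2,j,k}$ each operator requires is accurate and matches the role of~\eqref{eq:dirchlet} and~\eqref{eq:ghostpoints} in the paper's proof.
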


\begin{proof} We shall prove this for the case where $\beta = 1$, the other cases follow similarly. Note that 
	\begin{align*}
	\sum_{i,j,k=0}^{N+1} A_{ijk} D_1^+ B_{ijk}
	&= \frac{1}{h} \left( \sum_{i,j,k=1}^N A_{ijk} B_{(i+1)jk}- \sum_{i,j,k=1}^N A_{ijk} B_{ijk} \right) \\
	&= \frac{1}{h} \left( \sum_{j,k=0}^{N+1}\sum_{i=2}^{N+1} A_{(i-1)jk} B_{ijk} - \sum_{j,k=0}^{N+1}\sum_{i=1}^N A_{ijk} B_{ijk} \right) \\
	&= \frac{1}{h} \left( \sum_{i,j,k=0}^{N+1} A_{(i-1)jk}B_{ijk} - \sum_{i,j,k=0}^{N+1} A_{ijk}B_{ijk} \right) \\
	&= -\sum_{i,j,k=0}^{N+1} B_{ijk}D_1^- A_{ijk}
	\end{align*}
	where we used the boundary conditions ~\eqref{eq:dirchlet} and ~\eqref{eq:ghostpoints} for $A_{ijk}$. For the second identity, using the same trick, we obtain
	\begin{align*}
	\sum_{i,j,k=0}^{N+1} A_{ijk} D_1^- B_{ijk}
	&= \frac{1}{h} \left( \sum_{i,j,k=1}^N A_{ijk}B_{ijk} - \sum_{i,j,k=1}^N A_{ijk} B_{(i-1)jk} \right) \\
	&= \frac{1}{h} \left( \sum_{j,k=0}^{N+1}\sum_{i=1}^N A_{ijk}B_{ijk} - \sum_{j,k=0}^{N+1}\sum_{i=0}^{N-1} A_{(i+1)jk}B_{ijk} \right) \\
	&= \frac{1}{h} \left(\sum_{i,j,k=0}^{N+1} A_{ijk}B_{ijk} - \sum_{i,j,k=0}^{N+1} A_{(i+1)jk}B_{ijk} \right) \\
	&= -\sum_{i,j,k=0}^{N+1} B_{ijk}D_1^+ A_{ijk}.
	\end{align*}
	For the third identity,
		\begin{align*}
	\sum_{i,j,k=0}^{N+1} A_{ijk} D_1^c B_{ijk}
	&= \frac{1}{2h} \left( \sum_{j,k=0}^{N+1}\sum_{i=1}^N A_{ijk}B_{{(i+1)}jk} - \sum_{j,k=0}^{N+1}\sum_{i=1}^N A_{ijk} B_{(i-1)jk} \right) \\
	&= \frac{1}{2h} \left( \sum_{j,k=0}^{N+1}\sum_{i=2}^{N+1} A_{
	(i-1)jk}B_{ijk} - \sum_{j,k=0}^{N+1}\sum_{i=0}^{N-1} A_{(i+1)jk}B_{ijk} \right) \\
	&= \frac{1}{2h} \left(\sum_{i,j,k=0}^{N+1} A_{(i-1)jk}B_{ijk} - \sum_{i,j,k=0}^{N+1} A_{(i+1)jk}B_{ijk} \right) \\
	&= -\sum_{i,j,k=0}^{N+1} B_{ijk}D_1^c A_{ijk}.
	\end{align*}
	where we have used boundary values of $A_{ijk}$ and $B_{ijk}$. 
\end{proof}

We believe the following lemma is a standard result from real analysis but we did not find a suitable reference to refer to and therefore provide the proof here for completeness.
\begin{lemma}\label{lem:weakcont}
	Assume that $\{g_{h,\Delta t}\}_{h,\Delta t}$ is a sequence of piecewise constant functions converging weak*, as $h,\Delta t\to 0$, in $L^\infty([0,T];L^2(\dom))$ to some limit $g\in L^\infty([0,T];L^2(\dom))$ that is weakly continuous in time in $L^1(\dom)$, i.e., $\int g(s,x)\phi(x) dx \rightarrow \int g(t,x)\phi(x)dx$ when $s\to t$ for $\phi\in L^\infty(\dom)$. In addition, assume that
	\begin{equation*}
	\norm{D_t^+ g_{h,\Delta t}}_{L^2([0,T];L^1(\dom))}\leq C,
	\end{equation*}
	where $C$ is a constant independent of $h$ and $\Delta t$.
	Then, up to a subsequence,
	\begin{equation*}
	\int_{\dom} g_{h,\Delta t}\phi(x) dx \stackrel{h,\Delta t\to 0}{\longrightarrow} \int_{\dom} g(t,x)\phi(x) dx,
	\end{equation*}
	for all $t\in [0,T]$ and $\phi\in L^\infty(\dom)$.
\end{lemma}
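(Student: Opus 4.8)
The plan is to reduce the statement to an equicontinuity-plus-uniform-boundedness argument for the scalar time-dependent quantities
\[
G_{h,\Delta t}(t) := \int_\dom g_{h,\Delta t}(t,x)\,\phi(x)\,dx, \qquad G(t):=\int_\dom g(t,x)\,\phi(x)\,dx,
\]
for a fixed $\phi\in L^\infty(\dom)$. First I would record the uniform bound $\|G_{h,\Delta t}\|_{L^\infty([0,T])}\le C_g\|\phi\|_{L^2(\dom)}$, where $C_g:=\sup_{h,\Delta t}\|g_{h,\Delta t}\|_{L^\infty([0,T];L^2)}<\infty$ is finite because weak* convergent sequences are bounded. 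The crucial ingredient is a modulus-of-continuity estimate extracted from the hypothesis on $D_t^+ g_{h,\Delta t}$. Writing $g^n$ for the spatial value of $g_{h,\Delta t}$ on the interval $S_n$ and $G^n=\int_\dom g^n\phi\,dx$, then for $s\in S_m$, $t\in S_p$ with $m\le p$ I would telescope and apply H\"older's inequality,
\[
|G_{h,\Delta t}(t)-G_{h,\Delta t}(s)| \le \|\phi\|_{L^\infty}\sum_{n=m}^{p-1}\|g^{n+1}-g^n\|_{L^1} \le \|\phi\|_{L^\infty}\,\big((p-m)\Delta t\big)^{1/2}\,\|D_t^+ g_{h,\Delta t}\|_{L^2([0,T];L^1(\dom))},
\]
and then use $(p-m)\Delta t\le |t-s|+\Delta t$ together with the assumed bound $C$ to conclude $|G_{h,\Delta t}(t)-G_{h,\Delta t}(s)|\le \|\phi\|_{L^\infty}C(|t-s|+\Delta t)^{1/2}$.

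With this estimate in hand I would extract a convergent subsequence by an Arzel\`a--Ascoli argument. Since the $G_{h,\Delta t}$ are only piecewise constant, I would pass to their piecewise-linear interpolants $\widetilde G_{h,\Delta t}$; the nodal increments satisfy $|G^{n+1}-G^n|\le \|\phi\|_{L^\infty}C\Delta t^{1/2}$, so $\|\widetilde G_{h,\Delta t}-G_{h,\Delta t}\|_{L^\infty([0,T])}\to 0$, and the $\widetilde G_{h,\Delta t}$ inherit the uniform bound and an equicontinuity modulus that is uniform in the (sub)sequence as $\Delta t\to 0$. Arzel\`a--Ascoli then yields a subsequence with $\widetilde G_{h,\Delta t}\to \bar G$ uniformly for some continuous $\bar G$, hence also $G_{h,\Delta t}\to\bar G$ uniformly. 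To identify $\bar G$, I would test against $\psi\in L^1([0,T])$: weak* convergence of $g_{h,\Delta t}$ applied to $\psi(t)\phi(x)\in L^1([0,T];L^2(\dom))$ gives $\int_0^T G_{h,\Delta t}\psi\,dt\to\int_0^T G\psi\,dt$, while uniform convergence gives $\int_0^T G_{h,\Delta t}\psi\,dt\to\int_0^T \bar G\psi\,dt$; thus $\bar G=G$ a.e., and since both are continuous in time (the continuity of $G$ being exactly the assumed weak continuity of $g$ in $L^1$ combined with $\phi\in L^\infty$), $\bar G(t)=G(t)$ for every $t$.

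Finally I would upgrade to a single subsequence valid for all $\phi\in L^\infty(\dom)$. I would fix a countable family $\{\phi_k\}\subset L^\infty(\dom)$ dense in $L^2(\dom)$ (e.g. simple functions), apply the previous step to each $\phi_k$, and diagonalize to obtain one subsequence along which $\int_\dom g_{h,\Delta t}(t)\phi_k\,dx\to\int_\dom g(t)\phi_k\,dx$ for every $t$ and every $k$. For general $\phi\in L^\infty(\dom)$ and fixed $t$, I would split $\int_\dom(g_{h,\Delta t}(t)-g(t))\phi\,dx = \int_\dom(g_{h,\Delta t}(t)-g(t))\phi_k\,dx + \int_\dom(g_{h,\Delta t}(t)-g(t))(\phi-\phi_k)\,dx$ and bound the second term by $(\|g_{h,\Delta t}(t)\|_{L^2}+\|g(t)\|_{L^2})\|\phi-\phi_k\|_{L^2}\le 2C_g\|\phi-\phi_k\|_{L^2}$, using that the uniform $L^2$ bound passes to the limit $g(t)$ for every $t$ (via the weak continuity of $g$ and weak lower semicontinuity of the norm); choosing $k$ and then $h,\Delta t$ appropriately closes the argument. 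The main obstacle I anticipate is the precompactness step: because the interpolants are genuinely discontinuous and the modulus of continuity carries the spurious $+\Delta t$ term, one must carefully justify the Arzel\`a--Ascoli extraction through the piecewise-linear interpolants and the uniform smallness of the jumps, and one must verify that the $L^2$ bound on the limit $g(t)$ holds for \emph{every} $t$, not merely almost every $t$, so that the density argument remains valid pointwise in time.
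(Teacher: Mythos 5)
Your proposal is correct, and its central estimate is exactly the one the paper uses: the telescoping-plus-Cauchy--Schwarz bound $\bigl|\int_\dom (g_{h,\Delta t}(s)-g_{h,\Delta t}(t))\phi\,dx\bigr|\le C\|\phi\|_{L^\infty}(|t-s|+\Delta t)^{1/2}$ coming from the $L^2([0,T];L^1(\dom))$ control of $D_t^+g_{h,\Delta t}$. Where you diverge is in how this equicontinuity is exploited. The paper runs a direct three-term triangle inequality: it extracts (by a diagonal argument) a subsequence converging at a countable dense set of times $\mathcal{T}$, then for an arbitrary $t$ compares $g_{h_m,\Delta t_m}(t)$ to $g_{h_m,\Delta t_m}(t_j)$, $g_{h_m,\Delta t_m}(t_j)$ to $g(t_j)$, and $g(t_j)$ to $g(t)$, using respectively the modulus estimate, the dense-set convergence, and the assumed weak time continuity of $g$. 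You instead pass to piecewise-linear interpolants, invoke Arzel\`a--Ascoli to get uniform convergence of $G_{h,\Delta t}$ to a continuous limit, identify that limit with $G$ by testing the weak* convergence against separable test functions $\psi(t)\phi(x)\in L^1([0,T];L^2(\dom))$, and then diagonalize over a countable family $\{\phi_k\}$ dense in $L^2$ to obtain one subsequence for all $\phi$. Both routes are sound; the obstacles you flag (the spurious $+\Delta t$ in the modulus, handled by discarding finitely many indices once $\Delta t_m<\delta$; the need for $\|g(t)\|_{L^2}\le C_g$ at \emph{every} $t$, obtained from weak continuity plus weak lower semicontinuity of the norm) are real but routine to close. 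Your version buys a slightly stronger conclusion (uniform-in-time convergence of the tested quantities) and makes fully explicit that a single subsequence serves all $\phi\in L^\infty(\dom)$ --- a point the paper leaves implicit, since it fixes $\phi$ at the outset --- at the cost of the extra interpolation and identification machinery that the paper's shorter dense-times argument avoids.
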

\begin{proof}
	Let $\phi\in L^\infty(\dom)$. As $\{g_{h,\Delta t}\}$ is weak* convergent in $L^\infty([0,T];L^2(\dom))$ we can find a dense set $\mathcal{T}:=\{t_i\}_{i=1}^\infty\subset [0,T]$ such that for a (diagonal) subsequence $\{h_m,\Delta t_m\}_{m=1}^\infty$
	\begin{equation*}
	\int_{\dom} g_{h_m,\Delta t_m}(t_i,x)\phi(x) dx\stackrel{m\to\infty}{\longrightarrow} \int_{\dom} g(t_i,x)\phi(x) dx,\quad \text{	for all }\, t_i\in\mathcal{T}.
	\end{equation*}
	Fix $\epsilon>0$ arbitrary and $t\in [0,T]$. Then since $g$ is weakly continuous, we can find an interval $I\subset [0,T]$ such that $t\in I$ and for all $s\in I$,
	\begin{equation*}
	\left|\int_{\dom} g(t,x)\phi(x) dx -\int_{\dom}g(s,x)\phi(x) dx\right|<\frac{\epsilon}{3}.
	\end{equation*}
	Next, we pick $M_1\in\N$ large enough, such that for all $m\geq M_1$, and all $t_j\in \mathcal{T}\cap I$,
	\begin{equation*}
	\left|\int_{\dom} g_{h_m,\Delta t_m}(t_j,x)\phi (x)dx -\int_{\dom} g(t_j,x)\phi (x)dx\right|<\frac{\epsilon}{3}.
	\end{equation*}
	We observe that we can write for $s\geq t\in [0,T]$,
	\begin{equation*}
	\int_{\dom}\left(g_{h,\Delta t}(s,x)-g_{h,\Delta t}(t,x)\right)\phi(x) dx 
	 =\Delta t \int_{\dom}\left(\sum_{\ell=\floor*{\frac{t}{\Delta t}}}^{\floor*{\frac{s}{\Delta t}}-1} D_t^+ g_{h}^\ell(x)\right)\phi(x) dx.
	\end{equation*}
	Thus,
	\begin{align*}
\left|	\int_{\dom}\left(g_{h,\Delta t}(s,x)-g_{h,\Delta t}(t,x)\right)\phi(x) dx	\right|	& \leq\Delta t \int_{\dom}\left(\sum_{\ell=\floor*{\frac{t}{\Delta t}}}^{\floor*{\frac{s}{\Delta t}}-1} \left|D_t^+ g_{h}^\ell(x)\right|\right)|\phi(x)| dx\\
& \leq \Delta t \sum_{\ell=\floor*{\frac{t}{\Delta t}}}^{\ceil*{\frac{s}{\Delta t}}-1} \norm{D_t^+ g_{h}^\ell}_{L^1(\dom)}\norm{\phi}_{L^\infty} \\
& \leq \Delta t \left(\sum_{\ell=\floor*{\frac{t}{\Delta t}}}^{\ceil*{\frac{s}{\Delta t}}-1} \norm{D_t^+ g_{h}^\ell}_{L^1(\dom)}^2\right)^{1/2}\left(\ceil*{\frac{s}{\Delta t}}-\floor*{\frac{t}{\Delta t}}\right)^{1/2}\norm{\phi}_{L^\infty} \\
&\leq  \left(\Delta t\sum_{\ell=\floor*{\frac{t}{\Delta t}}}^{\ceil*{\frac{s}{\Delta t}}-1} \norm{D_t^+ g_{h}^\ell}_{L^1(\dom)}^2\right)^{1/2}\left(s-t+\Delta t\right)^{1/2}\norm{\phi}_{L^\infty} \\
&\leq  \norm{D_t^+ g_{h,\Delta t}}_{L^2([0,T];L^1(\dom))}\left(s-t+\Delta t\right)^{1/2}\norm{\phi}_{L^\infty}\\
&\leq C \left(s-t+\Delta t\right)^{1/2}.
	\end{align*}
	So we pick $M_2\geq M_1$ large enough and $J\subset I$ such that for $m\geq M_2$ and $t_j\in J$,
	\begin{equation*}
	\left|	\int_{\dom}\left(g_{h_m,\Delta t_m}(t_j,x)-g_{h,\Delta t}(t,x)\right)\phi(x) dx	\right|\leq C \left(t_j-t+2\Delta t_m\right)^{1/2}<\frac{\epsilon}{3}.
	\end{equation*}
	Then we have for $m\geq M_2$ (and $t_j\in J$),
	\begin{equation*}
	\begin{split}
	\left|\int_{\dom}(g(t,x)-g_{h_m,\Delta t_m}(t,x))\phi(x)dx\right|&\leq \left|\int_{\dom}(g(t,x)-g(t_j,x))\phi(x)dx\right|\\
	&\quad  + \left|\int_{\dom}(g(t_j,x)-g_{h_m,\Delta t_m}(t_j,x))\phi(x)dx\right| \\
	&\quad + \left|\int_{\dom}(g_{h_m,\Delta t_m}(t_j,x)-g_{h_m,\Delta t_m}(t,x))\phi(x)dx\right|\\
	&\leq \epsilon,
	\end{split} 
	\end{equation*}
	which proves the result.
\end{proof}

\bibliographystyle{abbrv}
\bibliography{qtensor_refs}

\end{document}